\numberwithin{equation}{section}
\theoremstyle{plain}
\newtheorem{thm}{Theorem}[section]
\newtheorem{lem}[thm]{Lemma}
\newtheorem{coro}[thm]{Corollary}
\newtheorem{prop}[thm]{Proposition}
\theoremstyle{definition}
\theoremstyle{remark}
\newtheorem{rmk}[thm]{Remark}
\newcommand{\Hom}{{\rm Hom}}
\newcommand{\Gal}{{\rm Gal}}
\newcommand{\sC}{{\mathcal C}}
\newcommand{\sZ}{{\mathcal Z}}
\renewcommand{\H}{{\mathbb H}}
\newcommand{\Q}{{\mathbb Q}}
\newcommand{\U}{{\mathbb U}}
\newcommand{\Z}{{\mathbb Z}}
\newcommand{\Ind}{\text{\rm Ind}}
\newcommand{\f}{\mathbb{F}}
\newcommand{\h}{\operatorname{H}}
\newcommand{\cupp}{\!\smile\!}
\renewcommand{\tilde}{\widetilde}
\renewcommand{\bar}[1]{\overline{#1}{}}
\newcommand{\dz}{\zeta_p}
\newcommand{\sq}[1]{\sqrt[p]{#1}}
\newcommand{\N}{{\operatorname{N}}}
\newcommand{\lin}[1]{#1^\times / #1^{\times p}}
\newcommand{\cores}{\operatorname{Cores}}
\newcommand{\rad}{\operatorname{Rad}}
\newcommand{\E}{\mathscr{E}}
\newtheoremstyle{pedro}{}{}{\itshape}{}{\sc}{~--}{ }{\thmname{#1}\thmnumber{ #2}\thmnote{ (#3)}}
\newtheoremstyle{pedrodef}{}{}{}{}{\sc}{~--}{ }{\thmname{#1}\thmnumber{ #2}\thmnote{ (#3)}}
\title{Extensions of unipotent groups, Massey products and Galois theory}
\author{Pierre Guillot}
\address{
Pierre Guillot \\
Universit\'{e} de Strasbourg \& CNRS\\
Institut de Recherche Math\'{e}matique Avanc\'{e}e\\
UMR 7501, F-67000 Strasbourg, France}
\email{guillot@math.unistra.fr}
\author{J\'an Min\'a\v{c}}
\thanks{JM is partially supported by the Natural Sciences and Engineering Research Council of Canada (NSERC) grant R0370A01}
\address{
J\'an Min\'a\v{c} \\
Department of Mathematics\\
Western University\\
London, Ontario, N6A 5B7\\
Canada}
\email{minac@uwo.ca}
\keywords{Massey Products, Galois Cohomology, Splitting Variety, Group Cohomology.}
\subjclass[2010]{55S30, 12G05}
\renewcommand{\H}{\operatorname{H}}
\begin{document}

\begin{abstract}
  We study the vanishing of four-fold Massey products in mod~$p$ Galois cohomology. First, we describe a sufficient condition, which is simply expressed by the vanishing of some cup-products, in direct analogy with the work of Guillot, Min\'a\v{c} and Topaz for~$p=2$. For local fields with enough roots of unity, we prove that this sufficient condition is also necessary, and we ask whether this is a general fact.

  We provide a simple splitting variety, that is, a variety which has a rational point if and only if our sufficient condition is satisfied. It has rational points over local fields, and so, if it satisfies a local-global principle, then the Massey Vanishing conjecture holds for number fields with enough roots of unity.

  At the heart of the paper is the construction of  a finite group~$\tilde \U_5(\f_p)$,  which has~$\U_5(\f_p)$ as a quotient. Here~$\U_n(\f_p)$ is the group of unipotent~$n\times n$-matrices with entries in the field~$\f_p$ with~$p$ elements; it is classical that~$\U_{n+1}(\f_p)$ is intimately related to~$n$-fold Massey products. Although~$\tilde \U_5(\f_p)$ is much larger than~$\U_5(\f_p)$, its definition is very natural, and for our purposes, it is easier to study.
\end{abstract}
\maketitle

\section{Introduction}

In recent years, a lot of papers have been devoted to the investigation of {\em Massey products} in Galois cohomology. Recall that, if~$\Gamma $ is a profinite group, and if~$x_1$, $x_2$, $\ldots$, $x_n \in \h^1(\Gamma , \f_p)$, where~$p$ is a prime number, the Massey product of these classes is a certain subset 
\[ \langle x_1, \ldots, x_n \rangle \subset \h^2(\Gamma , \f_p) \, ,  \]
which may very well be empty. The {\em Massey Vanishing Conjecture} (see~\cite{MT17}, \cite{MT16}) essentially states that, when~$\Gamma = \Gal(\bar F/F)$ is the absolute Galois group of the field~$F$, all the higher Massey products are trivial. More precisely, the claim is that for all choices of~$1$-dimensional classes~$x_1$, $\ldots $, $x_n$, with~$n \ge 3$, either their Massey product is empty, or~$0 \in \langle x_1, \ldots, x_n \rangle$. In the latter case, one says that the Massey product {\em vanishes}, and so the conjecture asserts that non-empty Massey products, in Galois cohomology, all vanish.

The present work is a continuation of~\cite{previous}, by Guillot, Min\'a\v{c} and Topaz, and we refer the reader to this paper for more background, historical comments, and references. Here, let us simply state that the conjecture is known to hold for~$n=3$ (all fields, all~$p$), for local fields (all~$n$, all~$p$), while~\cite{previous}, together with its appendix by Wittenberg, settled the case~$n=4$, $p=2$, when~$F$ is a number field. On top of the references above, the reader may consult~\cite{hopkins}, \cite{EM-triple}, \cite{MT3}.   

We propose to examine the case~$n=4$, $p$ arbitrary, when~$F$ is a field containing a primitive~$p$-th root of unity. Here it is useful to recall the main result of~\cite{previous}. For classes~$x_1, x_2, x_3, x_4 \in \h^1(\Gal(\bar F/F), \f_2)$, where~$F$ is an arbitrary field of characteristic~$\ne 2$ and~$\bar F$ is a separable closure of~$F$, it is proved that~$0 \in \langle x_1, x_2, x_3, x_4 \rangle$ is equivalent to the vanishing of a certain collection of cup-products (just one cup-product in good cases). A statement with full details, generalized to all~$p$, is given below (Theorem~\ref{thm-main-intro}). Cup-products are much simpler than higher Massey products, and this description is surprisingly simple indeed. It allows to prove the Massey Vanishing Conjecture directly in a number of cases. Crucially, it also allows the description of a simple {\em splitting variety}, that is, a variety which possesses an~$F$-rational point if and only if~$0 \in \langle x_1, x_2, x_3, x_4 \rangle$. The appendix to~\cite{previous} shows that this variety satisfies a certain local-to-global principle, and thus the result for number fields is deduced from that for local fields.

Massey products of~$n$ classes in mod~$p$ cohomology are intimately related to the group~$\U_{n+1}(\f_p)$ of upper triangular $(n+1)\times (n+1)$-matrices with entries in~$\f_p$, with~$1$'s on the diagonal. (Just how this connection is made will be explained in the next section.) The arguments in~\cite{previous} thus depend on the structure of the group~$\U_5(\f_2)$, and they break for a general prime~$p$. Much of the present paper is dedicated to the construction of a certain group~$\tilde \U_5(\f_p)$, which for us is the ``right'' analog of~$\U_5(\f_2)$ for~$p$ odd. We should point out that~$\tilde \U_5(\f_p)$ embeds into~$\U_{2p+1}(\f_p)$, and this is a way of seeing that, for~$p=2$, we have~$\tilde \U_5(\f_2) = \U_5(\f_2)$. Explicit matrices are given in~$\U_{2p+1}(\f_p)$, which generate~$\tilde \U_5(\f_p)$;  the reader can have a glance at Proposition~\ref{prop-explicit-U5}. The arguments of~\cite{previous} can then be adapted to~$\tilde \U_5(\f_p)$, although the proofs are more delicate.

What is more, by construction the group~$\U_5(\f_p)$ is a quotient of~$\tilde \U_5(\f_p)$. As a result, we have initially a {\em sufficient} condition, in terms of cup-products, implying that~$0 \in \langle x_1, x_2, x_3, x_4 \rangle$. Let us be more precise. Let~$F$ be a field containing a primitive~$p$-th root of unity~$\dz$. In this situation there is an isomorphism 
\[ \h^1(\Gal(\bar F/F), \f_p) \cong \lin F \, .   \]
This allows us to speak of the Massey product $\langle a, b, c, d \rangle$ when~$a, b, c, d \in F^\times$. (Also, as is standard, we write cup-products~$(a,b)_F$, for~$a, b \in F^\times$.) The condition $0 \in \langle a, b, c, d \rangle$ translates into the existence of a Galois extension~$K/F$ with~$\Gal(K/F)$ isomorphic to a subgroup of~$\U_5(\f_p)$, such that~$F[\sq a, \sq b, \sq c, \sq d] \subset K$. More precisely, the extension~$K/F$ must be ``compatible with~$a, b, c, d$'', a condition we spell out in the next section. Similarly, when~$L/F$ is a Galois extension with~$\Gal(L/F)$ identified with a subgroup of~$\tilde \U_5(\f_p)$, we will naturally arrive at a compatibility condition of~$L/F$ with~$a, b, c, d$. When such a compatible~$L/F$ exists, we say that {\em the Massey product of~$a, b, c, d$ vanishes in the sense of~$\tilde \U_5(\f_p)$}; this implies that the Massey product vanishes in the usual sense, that is, $0 \in \langle a, b, c, d \rangle$. In fact, from Corollary~\ref{coro-repeated-massey} we will deduce the vanishing of a whole collection of~$k$-fold Massey products, all involving~$a, b, c, d$, for various values of~$k$ with~$2 \le k \le 2p$.

We will prove the following Theorem (see Theorem~\ref{thm-main} in the text).

\begin{thm} \label{thm-main-intro}
  Let~$F$ be a field with~$\dz \in F$, and let~$a, b, c, d \in F^\times$. For generic values of~$a, b, c, d$, the following assertions are equivalent.
  \begin{enumerate}
  \item The Massey product of~$a, b, c, d$ vanishes in the sense of~$\tilde \U_5(\f_p)$.
    
    \item One can find~$B \in F[\sq a]$ such that~$\N_{F[\sq a]/F}(B)= b f_1^p$ for some~$f_1 \in F^\times$, and~$C \in F[\sq d]$ such that~$\N_{F[\sq d]/F}(C)= c f_2^p$ for some~$f_2 \in F^\times$, with the property that for any~$\sigma \in \Gal(F[\sq a]/F)$ and any~$\tau \in \Gal(F[\sq d]/F)$, we have 
\[ (\sigma (B), \tau (C))_{F[\sq a, \sq d]} = 0 \, .   \]
\end{enumerate}
\end{thm}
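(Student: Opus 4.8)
The plan is to convert the vanishing of the Massey product in the sense of $\tilde\U_5(\f_p)$ into the successive solvability of a chain of embedding problems attached to a well-chosen central series of $\tilde\U_5(\f_p)$, and then to recognize the top obstruction as the family of cup-products in~(2). By definition, assertion~(1) provides a Galois extension $L/F$ together with an identification of $\Gal(L/F)$ with a subgroup $G \le \tilde\U_5(\f_p)$ compatible with $a,b,c,d$; since $\dz \in F$, Kummer theory identifies the bottom layer of this datum with the inclusion $F[\sq a, \sq b, \sq c, \sq d] \subseteq L$, the four chosen characters matching $a,b,c,d$. I would fix the central series of $\tilde\U_5(\f_p)$ induced from $\U_{2p+1}(\f_p)$ via Proposition~\ref{prop-explicit-U5}, so that (1) becomes: the extension $F[\sq a, \sq b, \sq c, \sq d]/F$ can be enlarged, one kernel at a time, to a compatible $\tilde\U_5(\f_p)$-extension.

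The core of the argument is to exploit the anti-diagonal symmetry of $\tilde\U_5(\f_p)$, which exchanges $a \leftrightarrow d$ and $b \leftrightarrow c$, to split off two ``halves'' of the problem. One identifies normal subgroups $N_a, N_d \trianglelefteq \tilde\U_5(\f_p)$ interchanged by this symmetry, such that solving the embedding problem modulo $N_a$ is — by Hilbert~90 and Kummer theory over $F[\sq a]$ — exactly equivalent to producing $B \in F[\sq a]^\times$ with $\N_{F[\sq a]/F}(B) \equiv b \pmod{(F^\times)^p}$, the corresponding layer of $L$ being generated over $F[\sq a]$ by a $p$-th root of $B$; dually, modulo $N_d$ one obtains $C \in F[\sq d]^\times$ with $\N_{F[\sq d]/F}(C) \equiv c \pmod{(F^\times)^p}$. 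I would then show that the two problems are independent, so that solvability modulo $N_a \cap N_d$ is equivalent to producing $B$ and $C$ simultaneously. This is precisely where enlarging $\U_5(\f_p)$ to $\tilde\U_5(\f_p)$ is used: the extra room absorbs any constraint on the ``middle'' classes $b,c$ at this stage (consistently, the vanishing of $(B,C)_{F[\sq a, \sq d]}$ in~(2) forces $(b,c)_F = 0$ by a corestriction computation, so the Massey product is honestly defined).

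It remains to treat the last embedding problem, with abelian kernel $Z = N_a \cap N_d$, once $B$ and $C$ are fixed. I expect $Z$ to be an induced module whose underlying set, after a Shapiro-type identification, is the product of the $\Gal(F[\sq a]/F)$-orbit of $B$ and the $\Gal(F[\sq d]/F)$-orbit of $C$; the obstruction in $\h^2(F,Z)$ then decomposes into the Hilbert symbols $(\sigma (B), \tau (C))_{F[\sq a, \sq d]}$ indexed by $(\sigma,\tau) \in \Gal(F[\sq a]/F) \times \Gal(F[\sq d]/F)$. Concretely, lifting the homomorphism already built to all of $\tilde\U_5(\f_p)$ forces the remaining matrix entries to satisfy a cocycle relation whose class, read off from the explicit matrices of Proposition~\ref{prop-explicit-U5}, is the cup-product of a Kummer cocycle of $\sigma (B)$ with one of $\tau (C)$. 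Hence the last problem is solvable, compatibly, if and only if all these cup-products vanish — which is~(2). Running the chain backwards shows that any $B,C$ satisfying~(2) assemble into such a $G$ and $L$, giving the converse. Genericity of $a,b,c,d$ is used only to guarantee the expected degrees ($[F[\sq a, \sq b, \sq c, \sq d]:F] = p^4$, $[F[\sq a,\sq d]:F[\sq d]] = p$, and so on), so that no embedding problem degenerates and the corestriction identifications hold.

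The step I expect to be the main obstacle is this last one: carrying out the module-theoretic bookkeeping for $Z$ honestly, proving that the obstruction is the \emph{entire} family $\{(\sigma (B),\tau (C))_{F[\sq a,\sq d]}\}$ rather than a partial combination of it, and — most delicate — checking that a solution can always be selected so as to preserve compatibility with $a,b,c,d$ at every earlier layer. This is exactly why one cannot argue with $\U_5(\f_p)$ itself and must use the more transparent model $\tilde\U_5(\f_p)$; the care already needed for $p=2$ in~\cite{previous} is magnified by the larger kernels occurring for odd $p$.
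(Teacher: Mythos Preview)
Your overall strategy is close to the paper's, and the key insight — that the final obstruction lives in an induced module, so Shapiro's lemma unpacks it into the family $\{(\sigma(B),\tau(C))_{F[\sq a,\sq d]}\}$ — is exactly right. But the paper's implementation is cleaner than the chain-of-embedding-problems you describe, and you have a genuine misunderstanding about where genericity enters.

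Instead of descending through a central series pulled back from $\U_{2p+1}$, the paper uses a single two-step filtration built into the very definition of~$\tilde\U_5$: there is an exact sequence
\[
0 \longrightarrow \f_p[\U_2 \times \U_2] \longrightarrow \tilde\U_5 \longrightarrow \U_3^{(1)} \times \U_3^{(2)} \longrightarrow 1,
\]
with $\U_3^{(i)} = \f_p[\U_2] \rtimes \U_2$. Because the quotient is literally a direct product, your ``independence of the two halves'' is automatic: a map $\gamma\colon G_F \to \U_3^{(1)} \times \U_3^{(2)}$ is just a pair $(\gamma_1,\gamma_2)$, and Propositions~\ref{prop-4-implies-3} and~\ref{prop-2-implies-4} show that producing $\gamma_1$ compatible with $a,b$ is equivalent to producing~$B$ (with $\gamma_1^*(\h^1(\f_p[\U_2],\f_p))$ equal to the $\Gal(F[\sq a]/F)$-module generated by~$[B]$), and dually for $\gamma_2$ and~$C$. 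The kernel $\f_p[\U_2\times\U_2]$ is induced from the trivial module of the subgroup $\tilde N$ with $\tilde G/\tilde N \cong \U_2\times\U_2$, and the extension class is the cup-product $\tilde\phi_1\smile\tilde\phi_2$ of two explicit $1$-cocycles; this is what makes the Shapiro step (Theorem~\ref{thm-maps-into-Un-tilde}) completely transparent. Your proposed route through the central series of $\U_{2p+1}$ would obscure both the product structure and the cup-product description of the obstruction.

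The real gap is your account of genericity. It is \emph{not} used to force $[F[\sq a,\sq b,\sq c,\sq d]:F]=p^4$ or similar degree conditions; in fact, $a$ being a $p$-th power is explicitly one of the allowed ``generic'' cases. In the paper, (1)$\Rightarrow$(2) holds with no restriction whatsoever. Genericity is needed only for (2)$\Rightarrow$(1), and the issue is this: given an arbitrary $B_0$ with $\N_{F[\sq a]/F}(B_0)=b$, one can always build a compatible $\gamma_1\colon G_F\to\U_3^{(1)}$, but in general only after replacing $B_0$ by some other~$B$ (Proposition~\ref{prop-2-implies-4}). If you are forced to change~$B$, the cup-product hypothesis in~(2), which concerns the \emph{original}~$B$, need not survive. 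The ``favourable cases'' (either $a,b$ independent in $\lin F$, or $a\in F^{\times p}$, or $p=2$; and symmetrically for $d,c$) are precisely those in which $B_0$ itself may be used unchanged, so the orthogonality hypothesis feeds directly into Theorem~\ref{thm-maps-into-Un-tilde}. Your proposal does not address this, and without it the converse direction does not go through.
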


Of course, Theorem~\ref{thm-main} spells out what ``generic values'' means. We point out that we first prove a purely group-theoretical result, Theorem~\ref{thm-maps-into-Un-tilde}, which is very general. The need to restrict (very mildly) the values of~$a, b, c, d$ emerges when we perform the translation from group theory to Galois theory.

For~$p=2$, one recognizes Theorem A from~\cite{previous} (and so a good deal of~~\cite{previous}, though not all, is contained in the present paper). 

Note also that the vanishing of a Massey product ``in the sense of~$G$'' can be given a definition for any group~$G$. We explore this in Remark~\ref{rmk-G-massey-products}. 

\[ \star \star \star  \]

Happily, we seem to have more than a sufficient condition. A phenomenon of {\em automatic Galois realization} is taking place. This is the name we give to results similar to the following, classical ones. When~$K/F$ is Galois with~$\Gal(K/F) \cong C_p$, and if we are willing to assume that~$p$ is odd and that~$F$ contains a primitive~$p^2$-th root of unity, then there exists automatically an extension~$L/F$ with~$\Gal(L/F) \cong C_{p^2}$ and~$K \subset L$ (this is a familiar exercise). In fact it is known, but a little harder to prove, that the existence of an extension~$K/F$ with~$\Gal(K/F) \cong C_p$ (with~$p$ odd) is enough to guarantee the existence of a Galois extension~$L/F$ with~$\Gal(L/F) \cong \Z_p$, see~\cite{whaples}; however, this time it is not claimed that~$K \subset L$. In the same vein, when~$\Gal(K/F) \cong \U_3(\f_p)$, there automatically exists~$L/F$ with~$\Gal(L/F) \cong C_p \wr C_p$ (wreath product of~$C_p$ with itself): we prove this in Proposition~\ref{prop-hilbert-symbol} (where~$C_p \wr C_p$ is written~$\f_p[\U_2] \rtimes \U_2$), but the result is well-known, in some form or other (see~\cite{automatic}). Again, in this second example, it is not claimed that~$K \subset L$.

A consequence of our Proposition~\ref{prop-local-fields} below is this:

\begin{thm} \label{thm-local-intro}
  Let~$F$ be a local field containing a primitive~$p^2$-th root of unity. Let~$a, b, c, d \in F^\times$, satisfying the genericity assumption from Theorem~\ref{thm-main}. Then the following conditions are equivalent.
  \begin{enumerate}
  \item[(i)] $\langle a, b, c, d \rangle$ is non-empty.
  \item[(ii)] $\langle a, b, c, d \rangle$ vanishes.
  \item[(iii)] $\langle a, b, c, d \rangle$ vanishes in the sense of~$\tilde \U_5(\f_p)$.
    \item[(iv)] $(a,b)_F = (b,c)_F= (c, d)_F = 0$.
\end{enumerate}
\end{thm}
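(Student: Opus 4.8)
The plan is to prove the chain of implications $(\mathrm{iv}) \Rightarrow (\mathrm{iii}) \Rightarrow (\mathrm{ii}) \Rightarrow (\mathrm{i}) \Rightarrow (\mathrm{iv})$, exploiting that over a local field the arithmetic is governed almost entirely by the (non-degenerate) Hilbert symbol. The implications $(\mathrm{iii}) \Rightarrow (\mathrm{ii})$ and $(\mathrm{ii}) \Rightarrow (\mathrm{i})$ are formal and already discussed in the text: vanishing in the sense of $\tilde \U_5(\f_p)$ implies $0 \in \langle a,b,c,d\rangle$ because $\U_5(\f_p)$ is a quotient of $\tilde\U_5(\f_p)$, and $0 \in \langle a,b,c,d\rangle$ trivially makes the set non-empty. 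So the content is in $(\mathrm{i}) \Rightarrow (\mathrm{iv})$ and $(\mathrm{iv}) \Rightarrow (\mathrm{iii})$.

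For $(\mathrm{i}) \Rightarrow (\mathrm{iv})$: if $\langle a,b,c,d\rangle$ is non-empty, then in particular the defining system exists, which forces the ``initial conditions'' $(a,b)_F = 0$ and $(c,d)_F = 0$ (these are the length-two Massey products that must already vanish before a length-three, let alone length-four, system can be built), and also $(b,c)_F$ lies in the indeterminacy, which over a field with $\dz \in F$ means $(b,c)_F = 0$ is forced as well — more carefully, non-emptiness of the $4$-fold product requires non-emptiness of the constituent $3$-fold products $\langle a,b,c\rangle$ and $\langle b,c,d\rangle$, and non-emptiness of a $3$-fold product $\langle x,y,z\rangle$ in any Galois cohomology forces $(x,y) = (y,z) = 0$. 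This is standard and I would just cite the relevant lemma from \cite{previous} or \cite{MT17}; over a local field no special input is needed here beyond the general mechanism.

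The heart of the matter is $(\mathrm{iv}) \Rightarrow (\mathrm{iii})$, and here I would use Theorem~\ref{thm-main-intro}: it suffices, for generic $a,b,c,d$, to produce $B \in F[\sq a]$ with $\N_{F[\sq a]/F}(B) = b f_1^p$ and $C \in F[\sq d]$ with $\N_{F[\sq d]/F}(C) = c f_2^p$ such that $(\sigma(B), \tau(C))_{F[\sq a,\sq d]} = 0$ for all $\sigma \in \Gal(F[\sq a]/F)$, $\tau \in \Gal(F[\sq d]/F)$. The existence of $B$ with $\N_{F[\sq a]/F}(B) = b f_1^p$ is exactly the statement that $b$ is a norm from $F[\sq a]$ modulo $p$-th powers, i.e. that $(a,b)_F = 0$; since we are assuming $(a,b)_F = 0$, Hilbert~90 / local class field theory gives such a $B$, and likewise $(c,d)_F = 0$ gives $C$. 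The remaining task is to arrange the vanishing of all the conjugate cup-products $(\sigma(B),\tau(C))_{F[\sq a,\sq d]}$. Using the projection formula / compatibility of cup-product with corestriction, each such symbol over $L := F[\sq a, \sq d]$ is controlled by symbols over $F$ built from $a$, $d$, $b$, $c$; the hypothesis $(b,c)_F = 0$ (together with $(a,b)_F = (c,d)_F = 0$) should force all of them to vanish. Concretely I expect the key computation to be: corestricting $(\sigma(B),\tau(C))_L$ down appropriately and using that over a local field the Hilbert symbol pairing is perfect, so a class is zero iff all its ``corestrictions paired against everything'' vanish; the relevant values reduce to $(a,c)_F$, $(b,d)_F$, $(b,c)_F$ and $p$-th-power ambiguities, all of which are either zero by hypothesis or die for degree reasons. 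This is presumably the content of Proposition~\ref{prop-local-fields} referenced in the text, so I would structure the argument as a direct appeal to that proposition together with the norm computation above.

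The main obstacle, as I see it, is the last step — controlling the full collection of conjugate symbols $(\sigma(B),\tau(C))_L$ rather than just the ``principal'' one $(B,C)_L$. Passing to the compositum $L = F[\sq a,\sq d]$ (degree $p^2$ over $F$ when $a,d$ are generic) and keeping track of how the Galois action on $B$ and $C$ interacts with restriction of cup-products is where the bookkeeping is delicate; one must use that $\Gal(L/F) \cong C_p \times C_p$ acts and that the symbol is Galois-equivariant, then reduce via corestriction to $H^2(F,\f_p)$, which for a local field is $\f_p$ and is detected by the invariant map. A secondary subtlety is the genericity hypothesis: one needs $a, b, c, d$ generic enough that $F[\sq a]$, $F[\sq d]$, and $L$ are genuine $C_p$- (resp. $C_p\times C_p$-) extensions and that Theorem~\ref{thm-main-intro} applies, but over a local field with $\dz \in F$ this is a mild open condition that is automatically satisfied outside finitely many cases, and I would simply note that we are assuming it as in the statement.
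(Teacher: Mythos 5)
Your proof is essentially correct, and for the core implication $(\mathrm{iv})\Rightarrow(\mathrm{iii})$ it is the same argument as the paper's: invoke Proposition~\ref{prop-local-fields} to produce $B$ and $C$ satisfying condition (2) of Theorem~\ref{thm-main}, then use that theorem (under the genericity assumption) to get the lift to~$\tilde\U_5$. Where you diverge from the paper is in how the remaining equivalences are established. The paper cites two external results — the Massey Vanishing Conjecture for local fields for $(\mathrm{i})\Leftrightarrow(\mathrm{ii})$, and \cite[Prop.~4.1]{localmasseycup} for $(\mathrm{ii})\Leftrightarrow(\mathrm{iv})$ — whereas you close the implication cycle by proving $(\mathrm{i})\Rightarrow(\mathrm{iv})$ directly from the formal property that a defined Massey product forces the adjacent cup-products to vanish (a fact the paper records in \S\ref{sec-prelim}: $s_i\cupp s_{i+1}=0$ in $\h^2(\bar\U_5,\f_p)$). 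This is a genuine, if modest, economy: your route $(\mathrm{iv})\Rightarrow(\mathrm{iii})\Rightarrow(\mathrm{ii})\Rightarrow(\mathrm{i})\Rightarrow(\mathrm{iv})$ uses the local field hypothesis only in the single step $(\mathrm{iv})\Rightarrow(\mathrm{iii})$, and avoids treating the MVC for local fields as a black box. (You could even drop the detour through the constituent triple products $\langle a,b,c\rangle$, $\langle b,c,d\rangle$ and just note that a compatible map $G_F\to\bar\U_5$ kills $s_1\cupp s_2$, $s_2\cupp s_3$, $s_3\cupp s_4$ at once.)

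One small caution on the heuristic you give for Proposition~\ref{prop-local-fields}. You say the conjugate symbols $(\sigma(B),\tau(C))_E$ ``reduce to $(a,c)_F$, $(b,d)_F$, $(b,c)_F$ and $p$-th-power ambiguities,'' with the first two dying ``for degree reasons.'' This is not what the proof actually does, and $(a,c)_F$ and $(b,d)_F$ are \emph{not} controlled by the hypotheses. The actual mechanism (in the non-degenerate case) is cleaner: apply the projection formula twice to get $\cores_{E/F}(\sigma(B),\tau(C))_E=(b,c)_F=0$, then invoke injectivity of the corestriction $\h^2(E,\f_p)\to\h^2(F,\f_p)$ for local fields. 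Only $(b,c)_F$ enters. The degenerate cases (when $[E:F]<p^2$) require separate and more delicate arguments — including the extra condition $(a,\dz)_F=0$, which is where the $p^2$-th-roots-of-unity hypothesis is used — so if you intended to reprove the proposition rather than cite it, the reduction you sketch would not go through as stated.
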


Thus we are witnessing an automatic realization of a~$\tilde \U_5(\f_p)$-extension, given the existence of a~$\U_5(\f_p)$-extension. Keep in mind that~$\U_5(\f_p)$ has order~$p^{10}$, while $\tilde \U_5(\f_p)$ has order~$p^{p^2 + 2p+2}$.

Here is how to recover this from the contents of the paper. The equivalence of (i) and (ii) is the Massey Vanishing Conjecture for local fields, while the equivalence of (iv) and (ii) was known, see~\cite[Prop.\ 4.1]{localmasseycup}. We have already pointed out that (iii) implies (ii), simply because~$\U_5$ is an appropriate quotient of~$\tilde \U_5$. The point made here is mostly that (iv) implies (iii); for this, use Proposition~\ref{prop-local-fields}, which shows that (iv) implies condition (2) from Theorem~\ref{thm-main-intro} above, and so by that Theorem we have the present condition (iii) (this is where we need~$a, b, c, d$ to be ``generic'').

It is a fascinating question to ask whether this extends beyond local fields. One is allowed to hope, at least, that Theorem~\ref{thm-local-intro} is also valid for number fields. We justify this enthusiasm with the work in the last section of this paper, where we produce a splitting variety for the vanishing of the Massey product of four elements~$a, b, c, d$ in the sense of~$\tilde \U_5(\f_p)$. It generalizes the construction of~\cite{previous}, from~$p=2$ to general~$p$. The variety is very simple geometrically, and one is tempted to believe that it should always satisfy a local-to-global principle for the existence of~$F$-rational points, as it does when~$p=2$, by the appendix to~\cite{previous}. There are considerable technical obstacles to overcome, to be sure, in order to prove the validity of such a principle. Still, we note that Theorem~\ref{thm-local-intro} would then hold for number fields (presumably with some restrictions on~$a, b, c, d$). Also the Massey Vanishing Conjecture would be proved for~$n=4$, $p$ arbitrary, and~$F$ a number field containing a primitive~$p^2$-th root of unity.

\[ \star \star \star  \]

The paper is organized as follows. Section~\ref{sec-prelim} contains various preliminaries about the groups~$\U_n(\f_p)$, explains their relationship with Massey products, and sets up the notation. The next four sections are group-theoretical in nature: we construct the group~$\tilde \U_n(\f_p)$, of which~$\U_n(\f_p)$ is quotient, whenever~$n$ is odd, and we prove a Theorem about maps from profinite groups into~$\tilde \U_n(\f_p)$. (We mostly care about the case~$n=5$, but the general case is no harder, and can be illuminating.) In Section~\ref{sec-translation}, we proceed to translate the group-theory into the langage of fields and Galois theory, and this Section culminates with a proof of Theorem~\ref{thm-main-intro}. Local fields are dealt with in Section ~\ref{sec-local-fields}. The last Section presents the construction of the splitting variety.

\bigskip

\noindent {\em Acknowledgements.} It is a pleasure to thank Nguyên Duy Tân for stimulating conversations, and for his interest, insight and encouragement. We are also grateful to participants of the BIRS workshop on Nilpotent Fundamental Groups in June 2017 for their interest in our work, in particular M.\ Florence, E.\ Matzri, F.\ Pop, A.\ Topaz,  K.\ Wickelgren and O.\ Wittenberg.

\section{Notation \& Preliminaries} \label{sec-prelim}

\subsection{Unipotent matrices}

We write~$\U_n(\f_p)$ for the group of unipotent~$n \times n$-matrices, with entries in the field~$\f_p$ having~$p$ elements, where~$p$ is a prime. Recall that a matrix is ``unipotent'' when it is upper-triangular, with~$1$'s on the diagonal. Very often we will write simply~$\U_n$ instead of~$\U_n(\f_p)$. We first collect the most basic facts about~$\U_n$, before explaining the relationship between this group and Massey products in mod~$p$ cohomology.

When~$g \in \U_n$, or more generally when~$g$ is a matrix, we write~$g_{ij}$ for the coefficient on the~$i$-th row, in the~$j$-th column of~$g$.

The centre~$\sZ(\U_n(\f_p))$ is isomorphic to~$\f_p$, and generated by~$I + g$, where~$I$ is the identity matrix, and the only non-zero coefficient of the matrix~$g$ is~$g_{1n} = 1$ (that is, in the top-right corner). We put~$\bar \U_n(\f_p) := \U_n(\f_p) / \sZ(\U_n(\f_p))$, also denoted simply by~$\bar \U_n$.

We write 
\[ s_i \colon \U_n(\f_p) \longrightarrow \f_p  \]
for the homomorphism~$g \mapsto g_{i,i+1}$, where~$1 \le i < n$. These can be combined into 
\[ (s_1, \ldots, s_{n-1}) \colon \U_n(\f_p) \longrightarrow \f_p^{n-1} \, ,   \]
which factors to give an isomorphism 
\[ \U_n(\f_p) / \Phi (\U_n(\f_p)) \longrightarrow \f_p^{n-1} \, ,   \]
where $\Phi (\U_n(\f_p))$ is the Frattini subgroup of~$\U_n$. Thus we see that~$\Phi (\U_n)$ is comprised of those matrices having zero entries on the ``near diagonal''.

Next, for~$1 \le i < n$, we put~$\sigma_i = I + g$ where~$g_{i,i+1} = 1$ and~$g_{ij} = 0$ otherwise. The elements~$\sigma_1, \ldots, \sigma_{n-1}$ are generators for~$\U_n$, and if we write~$\bar \sigma_i$ for the image of~$\sigma_i$ in~$\U_n / \phi (\U_n)$, then $\bar \sigma_1, \ldots, \bar \sigma_{n-1}$ correspond to the canonical basis of~$\f_p^{n-1}$ under the above isomorphism.

If we view~$s_i$ as an element of~$\h^1(\U_n, \f_p)$, then a crucial remark is the vanishing of the cup-product
\[ s_i \cupp s_{i+1} = 0 \in \h^2(\U_n, \f_p) \, ,   \]
when~$n \ge 3$ and~$i < n - 1$; also, if we view~$s_i$ as an element of~$\h^1(\bar \U_n, \f_p)$, as we clearly can, then 
\[ s_i \cupp s_{i+1} = 0 \in \h^2(\bar \U_n, \f_p) \, ,  \]
this time with the restriction~$n \ge 4$. Indeed, to prove both statements, use the homomorphism 
\[ \U_n \longrightarrow \bar \U_n \longrightarrow \U_3  \]
which deletes the first~$i-1$ rows and the~$n-2-i$ rightmost columns; this reduces the problem to showing 
\[ s_1 \cupp s_2 = 0 \in \h^2(\U_3, \f_p) \, .   \]
And indeed, the cohomology class of the extension 
\[ 0 \longrightarrow \f_p \cong \sZ(\U_3) \longrightarrow \U_3 \longrightarrow \f_p \times \f_p \longrightarrow 0  \]
is just~$s_1 \cupp s_2$, as a matrix multiplication readily shows. (When~$p=2$, this is the familiar statement that the dihedral group of order~$8$ is described as an extension of~$\f_2^2$ by a central~$\f_2$, with the cohomology class being a cup-product.)

\subsection{Massey products}

Now, let~$\Gamma $ be any profinite group. Let~$\chi_1, \ldots, \chi_n \in \h^1(\Gamma, \f_p)$ be given, with~$n \ge 2$. We say that a (continuous) homomorphism~$\phi \colon \Gamma \to \bar \U_{n+1}(\f_p)$ is {\em compatible} with~$\chi_1, \ldots, \chi_n$ when~$s_i  \circ \phi  = \chi_i$ for all indices~$i$. The {\em Massey product} of these elements is 
\[ \langle \chi_1, \ldots, \chi_n \rangle := \{ \phi^*(\alpha ) : \phi ~\textnormal{compatible with}~\chi_1, \ldots, \chi_n \} \subset \h^2(\Gamma , \f_p) \, ,   \]
where~$\alpha \in \h^2(\bar \U_{n+1}, \f_p)$ is the cohomology class of the extension 
\[ 0 \longrightarrow \f_p \cong \sZ(\U_{n+1}) \longrightarrow \U_{n+1} \longrightarrow \bar \U_{n+1} \longrightarrow 1 \, .   \]
The Massey product may very well be empty. When it is not, it is customary to say that the Massey product is ``defined''.

We shall be particulary interested in situations when~$0 \in \langle \chi_1, \ldots, \chi_n \rangle$, in which case we say that the Massey products {\em vanishes}. (We also write that~$\langle \chi_1, \ldots, \chi_n \rangle$ vanishes.) This happens precisely when there is a~$\phi \colon \Gamma \to \bar \U_{n+1}$ which is compatible with~$\chi_1, \ldots, \chi_n$ and which can be lifted to a homomorphism~$\Gamma \to \U_{n+1}$. Put more directly, the Massey product of~$\chi_1, \ldots, \chi_n$ vanishes if and only if there is a (continuous) homomorphism~$\psi \colon \Gamma \to \U_{n+1}$ such that~$s_i \circ \psi = \chi_i$ for all~$i$. Again, such a $\psi$ is called {\em compatible} with~$\chi_1, \ldots, \chi_n$.

As a particular case, note that when~$n=2$, there is just one homomorphism~$\phi \colon \Gamma \to \bar \U_3(\f_p) = \f_p \times \f_p $ compatible with~$\chi_1$ and~$\chi_2$, namely~$(\chi_1, \chi_2)$. It follows that 
\[ \langle \chi_1, \chi_2 \rangle = \{ \chi_1 \cupp \chi_2 \} \, .   \]
It should be noted that our definition of Massey products is based on the work of Dwyer in~\cite{dwyer}, and is not always the standard definition given in the literature.

\begin{rmk} \label{rmk-G-massey-products}
  It is of course possible to replace~$\U_n$ and~$\bar \U_n$ by other groups, and obtain new Massey products. Let us briefly give a few details, since what we do in the paper is related to that idea. Let~$G$ be any finite group, and suppose it is equipped with distinguished cohomology classes~$s_1, \cdots, s_n \in \h^1(G, \f_p)$ (these will not appear in the notation). For any profinite group~$\Gamma $ and classes~$\chi_1, \cdots, \chi_n \in \h^1(\Gamma, \f_p)$, let us say that {\em the Massey product of~$\chi_1, \ldots , \chi_n$ vanishes in the sense of~$G$} when there exists~$\psi \colon \Gamma \to G$ with~$\psi^*(s_i) = \chi_i$ (or~$s_i \circ \psi = \chi_i$ if you prefer). Note that ``Dwyer-Massey products'' is perhaps a better name.

  Next, suppose~$A$ is a central, abelian subgroup of~$G$, with the property that each~$s_i$ vanishes on~$A$, and so can be seen as an element of~$\h^1(G/A, \f_p)$. Then we put 
\[ \langle \chi_1, \ldots, \chi_n \rangle_{G, A} := \{ \phi^*(\alpha ) ~|~ \phi \colon \Gamma \to G/A ~\textnormal{with}~ \phi^*(s_i) = \chi_i \} \subset \h^2(\Gamma, A) \, ,   \]
where~$\alpha \in \h^2(G/A, A)$ is the class of the extension 
\[ 0 \longrightarrow A \longrightarrow G \longrightarrow G/A \longrightarrow 1 \, .   \]
We call~$\langle \chi_1, \ldots, \chi_n \rangle_{G, A}$ the {\em Massey product of~$\chi_1, \ldots, \chi_n$ with respect to~$G$ and~$A$}. This set is non-empty when ``the Massey product of~$\chi_1, \ldots, \chi_n$ vanishes in the sense of~$G/A$''; and it contains~$0$ when ``the Massey product of~$\chi_1, \ldots, \chi_n$ vanishes in the sense of~$G$''. When~$G= \U_{n+1}(\f_p)$ and~$A= \sZ(G)$, we recover the usual Massey products.

As explained in the Introduction, this paper is mostly dedicated to the vanishing of cohomology classes in the sense of a certain group~$\tilde \U_5$, to be constructed. The vocabulary just described will not be used, however; with this remark we merely wanted to point out the possible generalizations that can be envisaged.
\end{rmk}

\subsection{Galois theory}

 Most fields  will be assumed to contain a primitive~$p$-th root of unity~$\dz$, where~$p$ is our usual fixed prime. For brevity, we will indicate this by writing merely~``$\dz \in F$''.

Let~$\bar F$ be a separable closure of~$F$. We systematically write $G_F:= \Gal(\bar F/F)$ for the absolute Galois group of~$F$. We also write~$\h^*(F, \f_p)$ for~$\h^*(G_F, \f_p)$. When~$\dz \in F$, we have an isomorphism 
\[ \h^1(F, \f_p) \cong \lin F \, .  \]
The homomorphism~$\Gal(\bar F/F) \to \f_p$ corresponding to~$a \in F^\times$ will be denoted by~$\chi _a$. We also write 
\[ \langle a_1, \ldots, a_n \rangle := \langle \chi_{a_1}, \ldots, \chi_{a_n} \rangle \subset \h^2(F, \f_p) \, ,   \]
and call this set the Massey product of~$a_1, \ldots, a_n \in F^\times$. Accordingly, this Massey product can be said to be ``defined'' (= non-empty), or to vanish, as the case may be. A homomorphism~$G_F \to \U_{n+1}(\f_p)$ (or $\bar \U_{n+1}$) will be said to be compatible with~$a_1, \ldots, a_n$ when it is compatible with~$\chi_{a_1}, \ldots, \chi_{a_n}$. For convenience, when~$K/F$ is a Galois extension with~$\Gal(K/F)$ explicitly identified with a subgroup of either~$\bar \U_{n+1}(\f_p)$ or~$\U_{n+1}(\f_p)$, we will also speak of~$K/F$ being compatible with~$a_1, \ldots, a_n$, in the obvious sense.

Also note that we write 
\[ (a,b)_F := \chi_a \cupp \chi_b \, ,   \]
as is classical (we often abbreviate to~$(a,b)$). Thus, when~$\langle a_1, \ldots, a_n \rangle$ is defined, with~$n \ge 3$, we have~$(a_i, a_{i+1}) = 0$.

To be a little more concrete, suppose that~$\Gal(K/F) \subset \U_{n+1}$ (or~$\bar \U_{n+1}$) and that~$K/F$ is compatible with~$a_1, \ldots, a_n$. The kernel of~$\chi_{a_i}$ corresponds to the field $F[\sq a_i]$, in the Galois correspondence. The ``compatibility'' of~$K/F$ implies then that~$F[\sq a_i] \subset K$, and so~$F[\sq a_1, \ldots, \sq a_n] \subset K$. If~$\Gal(K/F)$ is all of~$\U_{n+1}$, whose Frattini quotient we have described, we see that $F[\sq a_1, \ldots, \sq a_n]$ is the largest $p$-Kummer extension of~$F$ contained in~$K$.

The {\em Massey vanishing conjecture} stipulates that, for any prime~$p$ and for any field~$F$ with~$\dz \in F$, and any~$a_1, \ldots, a_n \in F^\times$, with~$n \ge 3$, the Massey product~$\langle a_1, \ldots, a_n \rangle$ vanishes whenever it is non-empty. (The conjecture in fact extends, in the obvious fashion, to Massey products in fields which do not necessarily contain enough~$p$-th roots of unity, but we prefer to speak of Massey products of elements of~$F^\times$.)

\subsection{A lemma from representation theory}
We frequently use the following classical Lemma:

\begin{lem} \label{lem-usual-lemma-rep-theory}
Let~$G$ be a~$p$-group, and let~$V$ be an~$\f_p[G]$-module, where~$p$ is a prime. Suppose~$v \in V$ satisfies~$\sum_{g \in G} g \cdot v \ne 0$. Then the~$\f_p[G]$-module spanned by~$v$ is free.
\end{lem}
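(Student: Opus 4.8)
The plan is to exploit the standard structure theory of modules over $\f_p[G]$ when $G$ is a $p$-group, where the group algebra is local with maximal ideal the augmentation ideal $I$. First I would recall the two facts that make everything work: $\f_p[G]$ is local with residue field $\f_p$, and its socle (the annihilator of $I$, equivalently the $G$-fixed points $\f_p[G]^G$) is one-dimensional, spanned by the norm element $N := \sum_{g \in G} g$. Indeed $h \cdot N = N$ for all $h \in G$, so $N$ lies in the socle; and conversely any element of the socle is fixed by $G$, and a fixed element $\sum_g \lambda_g g$ must have all $\lambda_g$ equal, hence is a scalar multiple of $N$. Thus $\f_p[G] \cdot N = \f_p \cdot N$ is the unique minimal submodule of $\f_p[G]$, and more importantly, a free module $\f_p[G]^{\oplus r}$ has socle exactly $(\f_p \cdot N)^{\oplus r}$.

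The hypothesis $\sum_{g \in G} g \cdot v \ne 0$ says precisely that $N \cdot v \ne 0$ in $V$. Consider the $\f_p[G]$-module homomorphism $\varphi \colon \f_p[G] \to V$ sending $1 \mapsto v$; its image is $M := \f_p[G]\cdot v$, the submodule spanned by $v$, and I want to show $\varphi$ is injective, so that $M \cong \f_p[G]$ is free of rank one. The kernel $\ker \varphi$ is a submodule of $\f_p[G]$. If it were nonzero, then, since $\f_p \cdot N$ is the unique minimal submodule of $\f_p[G]$, we would have $\f_p \cdot N \subseteq \ker \varphi$, i.e. $\varphi(N) = N \cdot v = 0$, contradicting the hypothesis. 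Hence $\ker\varphi = 0$ and $M$ is free of rank one, which is exactly the assertion.

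To make this self-contained I would spell out the two ingredients used: (a) $\f_p[G]$ is a local ring, which follows because the augmentation ideal $I$ is nilpotent when $G$ is a $p$-group (a standard fact: $I$ is generated by $g - 1$ for $g \in G$, and a filtration argument, or induction on $|G|$ via a central subgroup of order $p$, shows $I^{N}=0$ for $N$ large), so every proper ideal is contained in $I$ and $I$ is the unique maximal ideal; (b) consequently every nonzero submodule $W$ of any module meets the socle nontrivially — apply the fact that the ascending chain $W \supsetneq IW \supsetneq I^2 W \supsetneq \cdots$ terminates, so the last nonzero term $I^k W$ satisfies $I \cdot (I^k W) = 0$ and lies in the socle — and the socle of $\f_p[G]$ itself is the line $\f_p \cdot N$ by the fixed-point computation above.

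The only mild subtlety, and the one place where care is needed, is the claim that $\f_p \cdot N$ is the \emph{whole} socle of $\f_p[G]$ (not merely contained in it): this is what forces any nonzero submodule $\ker\varphi$ to contain $N$, and it rests on the fact that a $G$-fixed element of $\f_p[G]$ is automatically $\f_p$-proportional to $N$, which is immediate by comparing coefficients. With that in hand, there is no real obstacle; the proof is the short two-line argument above, and the rest is recalling well-known structure theory. One could alternatively phrase the conclusion as: $M = \f_p[G]v$ has the same $\f_p$-dimension as $\f_p[G]$ precisely when $Nv \ne 0$, but the homomorphism-and-kernel formulation is cleanest.
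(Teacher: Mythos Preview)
Your proof is correct and follows essentially the same approach as the paper: both consider the map $\f_p[G]\to V$, $1\mapsto v$, and argue that a nonzero kernel would have to contain the norm element $N=\sum_g g$, contradicting $Nv\ne 0$. The only difference is in justifying that every nonzero $\f_p[G]$-submodule contains a $G$-fixed vector---the paper invokes the Sylow theorem (any $p$-subgroup of $\GL_n(\f_p)$ is conjugate into $\U_n(\f_p)$ and hence fixes a vector), while you use nilpotence of the augmentation ideal; both are standard and equally short.
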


\begin{proof}[Sketch]
  Recall that any non-zero~$\f_p[G]$-module contains a non-zero vector which is fixed by~$G$, whenever~$G$ is a~$p$-group. This follows from the Sylow Theorems, as~$\operatorname{GL}_n(\f_p)$ admits, as~$p$-Sylow subgroup, the group~$\U_n(\f_p)$ described above; its elements fix the first vector in the canonical basis of~$\f_p^n$.

  Next, apply this to the kernel of~$\pi \colon \f_p[G] \to V$, mapping~$1$ to~$v$; since~$\sum_{g \in G} g \not\in \ker(\pi)$, we see that~$\ker(\pi) = \{ 0 \}$.
\end{proof}

\section{The module~$S_n$} \label{sec-modS}

For convenience, we include an overview of the next four sections. We start with the description of a certain subgroup~$S_n$ of~$\U_n = \U_n(\f_p)$ (with~$p$ fixed throughout). When~$n=2m+1$, we have an exact sequence 
\[ 0 \longrightarrow S_n \longrightarrow  \U_n \longrightarrow \U_{m+1} \times \U_{m+1} \longrightarrow 1 \, , \tag{$\dagger$}   \]
which we wish to understand. We describe~$S_n$ as the tensor product~$V_m \otimes V_m^*$ where~$V_m$ is a certain~$\U_{m+1}$-module. In Section~\ref{sec-some-1-cocycles}, we describe~$\U_{m+1}$ as a semidirect product, in two different ways, which affords two 1-cocycles~$\phi_1 \in \h^1(\U_{m+1}, V_m)$ and~$\phi_2 \in \h^1(\U_{m+1}, V_m^*)$. In Section~\ref{sec-the-cup-product}, we show that the cup-product $\phi_1 \cupp \phi_2$ is none other than the class of the extension ($\dagger$). In a sense, the problem at hand has moved down from~$\h^2$ to~$\h^1$, which is much easier to deal with.

In Section~\ref{sec-wreath-extensions}, we use some lifts~$\tilde \phi_1$ and~$\tilde \phi_2$ of~$\phi_1$ and~$\phi_2$ respectively, to some extension groups of~$\U_m$ written~$\U_m ^{(i)}$ for~$i= 1, 2$. We use these to form the product $\tilde \phi_1 \cupp \tilde \phi_2$, which defines an extension of~$\U_m ^{(1)} \times \U_m ^{(2)}$, called~$\tilde \U_n$. The construction is so arranged that maps~$\Gamma \to \tilde \U_n$, where~$\Gamma $ is any profinite group, are extremely simple to understand. (More precisely, a certain lifting problem comes completely under control.) This is Theorem~\ref{thm-maps-into-Un-tilde}, the culmination of the ``group-theoretic part'' of the paper.  

It is  helpful to keep in mind that we mostly care about the case~$n=5$, throughout the paper. However, we include the general~$n=2m+1$ case (and even some information about~$n=2m$) because it is no harder (and in fact, clearer). There is nothing special about the number~$5$ at this point, and it would be misleading to give this impression. One thing we only prove for~$n=5$, however, is Proposition~\ref{prop-explicit-U5}, which provides an alternative description of~$\tilde \U_5(\f_p)$ as a group of matrices.

\bigskip

Let~$n \ge 2$ be given, and let~$m$ be defined by~$n= 2m$ if~$n$ is even, and~$n= 2m+1$ otherwise. Of paramount importance to us is the subgroup~$S_n \subset \U_n$, where the letter ``S'' is for ``square'', comprised of the elements whose non-zero entries are in the~$m \times m$-corner on the top-right (and on the diagonal), that is 
\[ S_{2m} := \{ g \in \U_{2m}(\f_p) : g_{ij}= 0 ~\textnormal{for}~ i<j\le m ~\textnormal{or}~ m \le i < j \} \, ,  \]
while 
\[ S_{2m+1} := \{ g \in \U_{2m+1}(\f_p) : g_{ij}= 0 ~\textnormal{for}~ i<j\le m+1 ~\textnormal{or}~ m+1 \le i < j \} \, .  \]
For example, the elements of~$S_5$ have the shape 
\[ \left(\begin{array}{ccccc}
1  & 0 & 0 & * &*   \\
0  & 1 & 0 & * & *  \\
0  & 0 & 1 & 0 & 0   \\
0  & 0 & 0 & 1 & 0  \\
0  & 0 & 0 & 0 & 1   
\end{array}\right) \, .   \]
%

We will think of the elements of~$\U_n$ as linear transformations of~$V_n := \f_p^n$, the latter being identified with the space of column matrices (the action being on the left); in fact we identify, once and for all, the matrices of size~$n \times n$ with the endomorphisms of~$V_n$. We will sometimes call~$V_n$ the {\em natural} $\U_n$-module.  We write~$e_1$, $\ldots$, $e_n$ for the canonical basis of~$V_n$, and we put 
\[ U_n = \langle e_1, \ldots, e_m \rangle \, , \qquad W_n = \langle e_{n-m +1}, e_{n-m+2}, \ldots, e_n \rangle \, .   \]
When~$n= 2m$, we have thus 
\[ V_n = U_n \oplus W_n \, ,   \]
while in the case~$n= 2m+1$ we have 
\[ V_n = U_n \oplus \langle e_{m+1} \rangle \oplus W_n \, .   \]
In this situation, we put~$U_n^+ = U_n \oplus \langle e_{m+1} \rangle$ and~$W_n^+ = W_n \oplus \langle e_{m+1} \rangle$, so that~$V_n = U_n^+ \oplus W_n = U_n \oplus W_n^+$, and $U_n^+ \cap W_n^+ = \langle e_{m+1} \rangle$).

In order to have a uniform notation for all~$n$, we decide to put~$U_n^+ = U_n$ when~$n=2m$, and also~$W_n^+ = W_n$.

We can now describe~$S_n$ as: 
\[ S_n = \{ g= 1+h  : h(U_n^+) =0 \, , \quad h(W_n) \subset U_n \} \, .   \]
Here it is understood that~$1$ is the identity of~$V_n$, and $h$ is a linear map $V_n \to V_n$. Note that, if~$1+h_1$ and~$1+h_2$ are elements of~$S_n$, then~$h_1 h_2 = 0$, and so 
\[ (1 + h_1)(1+h_2) = 1 + h_1 + h_2 \, .   \]
We have proved:

\begin{lem}
The group~$S_n$ is isomorphic to~$\Hom(W_m, U_m)$, {\em via} $1+h \mapsto h$. In particular, it is an elementary abelian~$p$-group. \hfill $\square$
\end{lem}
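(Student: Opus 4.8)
\emph{Proof proposal.} The matrix computation displayed just before the statement — namely $(1+h_1)(1+h_2) = 1 + h_1 + h_2$ for $1+h_1, 1+h_2 \in S_n$, valid because $h_1 h_2 = 0$ — already contains the substance of the lemma, so the plan is mainly to organize it into an explicit isomorphism. I would define
\[ \theta \colon S_n \longrightarrow \Hom(W_n, U_n), \qquad 1+h \longmapsto h|_{W_n}, \]
which is well defined because the description of $S_n$ forces $h(W_n) \subseteq U_n$ for any $1+h \in S_n$.

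First I would check that $\theta$ is a bijection. Injectivity: an element $1+h$ of $S_n$ also satisfies $h|_{U_n^+} = 0$ by definition of $S_n$, and since $V_n = U_n^+ \oplus W_n$, the restriction $h|_{W_n}$ determines $h$; hence $h|_{W_n} = 0$ forces $h = 0$. Surjectivity: given $f \in \Hom(W_n, U_n)$, extend it to an endomorphism $h$ of $V_n$ by setting $h|_{U_n^+} = 0$ and $h|_{W_n} = f$ (using the same direct sum decomposition). The nonzero entries of $h$ then lie in rows $1,\dots,m$ and in the columns indexing $W_n$, hence strictly above the diagonal, so $1+h$ is a genuine element of $\U_n$, visibly in $S_n$, with $\theta(1+h) = f$.

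Next I would observe that $\theta$ is a group homomorphism, where $\Hom(W_n, U_n)$ carries its additive structure: by the displayed identity, $\theta\big((1+h_1)(1+h_2)\big) = \theta(1 + h_1 + h_2) = (h_1+h_2)|_{W_n} = \theta(1+h_1) + \theta(1+h_2)$. Therefore $\theta$ is an isomorphism of groups. Finally, $\Hom(W_n, U_n)$ is, as an additive group, a finite-dimensional $\f_p$-vector space, hence an elementary abelian $p$-group, and so is $S_n$. (Equivalently and more directly: $S_n$ is abelian by the displayed identity, and $(1+h)^p = 1 + ph = 1$ since $h^2 = 0$ and $ph = 0$ in characteristic $p$.)

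I do not expect any genuine obstacle. The one slightly substantive point is the vanishing $h_1 h_2 = 0$, which rests on the inclusions $\im(h_2) \subseteq U_n \subseteq U_n^+ = \ker(h_1)$ together with $h_2|_{U_n^+} = 0$; the rest is bookkeeping with $V_n = U_n^+ \oplus W_n$. The only thing to be a little careful about is confirming that the extension-by-zero used for surjectivity actually lands in $\U_n$ rather than merely in $\operatorname{End}(V_n)$, which is immediate from the positions of its nonzero entries.
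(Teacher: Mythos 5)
Your proof is correct and follows the same route as the paper: the paper displays the identity $(1+h_1)(1+h_2) = 1 + h_1 + h_2$ (from $h_1 h_2 = 0$) and then simply states the lemma, and your writeup is just the fully-expanded version of that (well-definedness, injectivity and surjectivity via the direct sum $V_n = U_n^+ \oplus W_n$, the homomorphism property, and the exponent-$p$ observation). One small remark: the lemma as printed writes $\Hom(W_m, U_m)$, whereas you correctly use $\Hom(W_n, U_n)$; this matches the definition of $S_n$ and the notation used in the proof of Proposition~\ref{prop-S-is-tensor-product}, so the subscript $m$ in the lemma's statement appears to be shorthand for ``the $m$-dimensional spaces $U_n, W_n$'' (or a typo) rather than the earlier-defined subspaces of $V_m$, which would have the wrong dimension.
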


The subgroup~$S_n$ is normal in~$\U_n$: indeed $g(1+h)g^{-1} = 1 + g h g^{-1}$, and for~$g \in \U_n$, the spaces~$U_n$ and~$U_n^+$ are preserved by~$g$, showing that~$ghg^{-1}$ satisfies the same conditions as~$h$ does. Since~$S_n$ is abelian, the conjugation action factors through~$G:= \U_n(\f_p) / S_n$, and we wish to describe it. 

When~$n=2m$, we have~$G \cong \U_m \times \U_m$, clearly, and we write~$G= G_1 \times G_2$ accordingly. In this case, the group~$\U_n$ is in fact a semi-direct product: $\U_n = S_n \rtimes G$, where we use the section $G \cong \U_m \times \U_m \to \U_n$ which fills the top-right~$m \times m$-corner with~$0$'s. Thus, we can immediately see the elements of~$G_1$ as linear maps of~$U_m$, and the elements of~$G_2$ as linear maps of~$W_m$. Alternatively, this amounts to identifying the natural module~$V_m$, on which~$\U_m$ certainly acts, with~$U_m$ or~$W_m$, using our canonical bases.

When~$n=2m+1$, things are a little more delicate. We have~$G \cong \U_{m+1} \times \U_{m+1}$, allowing us to write~$G = G_1 \times G_2$ again, but the extension 
\[ 0 \longrightarrow S_n \longrightarrow \U_n(\f_p) \longrightarrow \U_{m+1} \times \U_{m+1} \longrightarrow 1 \, ,   \]
to which this paper is mostly devoted, is not split. For our initial purposes however, this is not a real problem. We do have a section~$G_1 \cong \U_{m+1} \to \U_n$ which copies the given~$(m+1) \times (m+1)$ matrix in the top-left corner, and uses entries from the identity matrix otherwise. Likewise, we have a section~$G_2 \cong U_{m+1} \to \U_n$, sticking the matrix in the bottom-right corner. This allows us to see the elements of~$G_1$ as linear transformations of~$U_n^+$, and those of~$G_2$ as linear transformations of~$W_n^+$.

To get rid of the ``+'', we use the following remarks, on which we will expand below. We consider the map 
\[ \pi = \pi_m \colon \U_{m+1} \longrightarrow \U_m  \]
which deletes the rightmost column and the bottom row. Likewise, consider the homomorphism 
\[ \pi' = \pi_m' \colon \U_{m+1} \longrightarrow \U_m  \]
which deletes the top row and the leftmost column. We combine them into a map 
\[ (\pi, \pi') \colon \U_{m+1} \times \U_{m+1} \longrightarrow \U_m \times \U_m \, .   \]
In turn, there is an obvious section $\U_m \times \U_m \to \U_{2m+1}$. So, {\em via} the map~$(\pi, \pi')$, we can see the elements of~$G_1$ acting on~$U_m$, and those of~$G_2$ acting on~$W_m$ -- exactly as in the~$n=2m$ case.

We prove now that the action is ``the obvious one'':

\begin{prop} \label{prop-S-is-tensor-product}
The action of~$G \cong G_1 \times G_2$ on~$S_n \cong \Hom(W_m, U_m)$ by conjugation is the natural one. In more details, suppose~$n=2m$ first. Then~$(g_1, g_2) \in G_1 \times G_2 \cong \U_m \times \U_m$ acts on~$h \in \Hom(W_m, U_m)$ by 
\[ (g_1, g_2) \cdot h (w) = g_1 \cdot h(g_2^{-1} \cdot w) \, .   \]
If we suppose that~$n=2m+1$, then~$(g_1, g_2) \in G_1 \times G_2 \cong \U_{m+1} \times \U_{m+1}$ acts on~$h \in \Hom(W_m, U_m)$ by 
\[ (g_1, g_2) \cdot h (w) = \pi(g_1) \cdot h(\pi'(g_2)^{-1} \cdot w) \, .   \]

In other words, the module~$S_n$ is isomorphic to~$V_m \otimes V_m^{*}$, with its natural~$\U_m \times \U_m$-action, pulled back to~$\U_{m+1} \times \U_{m+1}$ {\em via} $(\pi, \pi')$ in case~$n=2m+1$.
\end{prop}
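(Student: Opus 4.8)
The plan is to compute the conjugation action directly, using the description $S_n = \{1 + h : h(U_n^+) = 0, \ h(W_n) \subset U_n\}$ and the formula $g(1+h)g^{-1} = 1 + ghg^{-1}$ established just above the statement. Thus I need only understand the linear map $ghg^{-1} \colon V_n \to V_n$ when $g$ lies in the image of one of the sections $G_1 \cong \U_{m+1} \to \U_n$ or $G_2 \cong \U_{m+1} \to \U_n$. Since $G = G_1 \times G_2$, it suffices to treat $g = (g_1, 1)$ and $g = (1, g_2)$ separately and compose.

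First I would dispose of the case $n = 2m$, which is cleaner because $\U_n = S_n \rtimes (\U_m \times \U_m)$ is an honest semidirect product and $V_n = U_n \oplus W_n$. For $g_1 \in G_1$, the section places $g_1$ in the top-left $m \times m$ block, so as a linear map $g_1$ acts as the given matrix on $U_n$ and as the identity on $W_n$; symmetrically $g_2 \in G_2$ acts as the identity on $U_n$ and as its matrix on $W_n$. A map $h \in \Hom(W_m, U_m) = \Hom(W_n, U_n)$ (extended by zero on $U_n$) then satisfies $g_1 h g_1^{-1}(w) = g_1 \cdot h(w)$ for $w \in W_n$ (since $g_1^{-1}w = w$), and $g_2 h g_2^{-1}(w) = h(g_2^{-1} w)$; combining gives $(g_1,g_2)\cdot h(w) = g_1 \cdot h(g_2^{-1}\cdot w)$, exactly the claimed formula. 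Identifying $U_m \cong V_m$ and $W_m \cong V_m$ via the canonical bases turns this into the natural action on $V_m \otimes V_m^*$.

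The case $n = 2m+1$ is the one requiring care, and is where I expect the only real subtlety. Here the section $G_1 \cong \U_{m+1} \to \U_n$ puts $g_1$ in the top-left $(m+1)\times(m+1)$ corner, so $g_1$ acts on $U_n^+ = U_n \oplus \langle e_{m+1}\rangle$ as an element of $\U_{m+1}$ and trivially on $W_n$. Now if $h(W_n) \subset U_n$ and $h(U_n^+) = 0$, then for $w \in W_n$ we have $h(w) \in U_n$, and applying $g_1$ to a vector of $U_n$ may introduce a component along $e_{m+1}$ — but precisely because we then need the composite to land back in $S_n$, i.e.\ $g_1 h g_1^{-1}(W_n) \subset U_n$, the relevant bookkeeping is that the top-left $m \times m$ block of $g_1$ is $\pi(g_1)$, and it is $\pi(g_1)$ (not $g_1$ itself) that governs how $h(w)$ transforms inside $U_n$. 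Dually, the bottom-right $(m+1)\times(m+1)$ corner of $g_2$ acts on $W_n^+ = \langle e_{m+1}\rangle \oplus W_n$, its bottom-right $m \times m$ block being $\pi'(g_2)$, and since $h$ kills $e_{m+1}$ we get $h(g_2^{-1}w) = h(\pi'(g_2)^{-1}w)$ for $w \in W_n$. Composing yields $(g_1,g_2)\cdot h(w) = \pi(g_1)\cdot h(\pi'(g_2)^{-1}\cdot w)$. The main obstacle is simply being careful about the role of $e_{m+1}$: one must check that the $e_{m+1}$-components produced by $g_1$ and consumed by $g_2^{-1}$ do not interfere with the identification $S_n \cong \Hom(W_m, U_m)$, and that it is genuinely $\pi, \pi'$ and not the full matrices that appear. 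Once the two one-sided formulas are verified, the identification with $V_m \otimes V_m^*$ pulled back along $(\pi,\pi')$ is immediate from unwinding definitions, and the verification that this action is well-defined on all of $S_n$ (i.e.\ that the conjugate still lies in $S_n$) has already been recorded in the paragraph preceding the proposition.
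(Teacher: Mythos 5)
Your proof follows the same route as the paper's: treat $g=(g_1,1)$ and $g=(1,g_2)$ separately, use $g(1+h)g^{-1}=1+ghg^{-1}$, and track which direct summand of $V_n$ each factor acts nontrivially on. The case $n=2m$ and the $g_2$-half of the case $n=2m+1$ are argued exactly as in the paper and are correct.

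One assertion in the $g_1$-half is wrong, and the justification offered to patch it is circular. You write that ``applying $g_1$ to a vector of $U_n$ may introduce a component along $e_{m+1}$'', and then appeal to ``we then need the composite to land back in $S_n$'' to explain why $\pi(g_1)$ governs the action. In fact no $e_{m+1}$-component ever arises: since $g_1$ sits in $\U_{m+1}$ embedded in the top-left corner and is upper unipotent, $g_1(e_j)\in\langle e_1,\dots,e_j\rangle$ for every $j$, so $g_1(U_n)\subseteq U_n$ and the restriction $g_1|_{U_n}$ is literally given by the $m\times m$ block $\pi(g_1)$. Saying ``it must work out because the result has to lie in $S_n$'' is not a reason — the fact that the conjugate lies in $S_n$ was proved \emph{separately} (in the paragraph before the proposition), and here you must \emph{compute} the action, not assume its shape. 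The correct and short argument is the paper's: $g_1^{-1}w=w$ for $w\in W_n$, then $h(w)\in U_n$, and $g_1$ acts on $U_n$ by $\pi(g_1)$. The genuine $e_{m+1}$-subtlety is only on the $g_2$ side (where $g_2^{-1}w\in W_n^+$ may have an $e_{m+1}$-component, killed by $h$), and you handle that one correctly.
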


\begin{proof}
  We give the argument in the case~$n=2m+1$, which is more important and (slightly) more difficult. Suppose~$f= 1 + h \in S_n$. Consider first the action of~$(g, 1) \in G_1 \times G_2$. We write also~$g$ for the lift in~$\U_n$ described above (using~$g \in \U_{m+1}$ in the top-left corner, and entries from the identity otherwise). We have~$(g, 1) \cdot f = g f g^{-1} = 1 + g h g^{-1}$.

  Since~$g^{-1}$ is the identity on~$W_n$, we have for~$w \in W_n$: 
\[ ghg^{-1}(w) = g(h(w)) \, .   \]
However, $h(w) \in U_m$ by definition of~$S_n$, so~$g(h(w))=\pi(g) \cdot h(w)$. When we use the isomorphism $S_n \cong \Hom(W_n, U_n)$, which maps $f=1+h$ to~$h$, the action of~$(g,1)$ is thus as the proposition predicts.

Now we consider the action of~$(1, g)$, and again the letter~$g$ will denote simultaneously an element of~$\U_{m+1}$ and a lift in~$\U_n$, this time using the original entries in the bottom-right corner. Again the action of~$(1, g)$ on~$f = 1+h$ is given by~$g f g^{-1} = 1 + g h g^{-1}$. If~$w \in W_n$ we certainly have 
\[ g h g^{-1}(w) = h g^{-1}(w) \, ,   \]
since~$h$ takes its values in~$U_n$, and~$g$ is the identity there. On the other hand, we have~$g^{-1}(w) \in W_n^+$. However, it is assumed that~$h$ is~$0$ on all of~$U_n^+$, so on~$U_n^+ \cap W_n^+ = \langle e_{m+1} \rangle$, so~$h(g^{-1}(w))$ only depends on the image of~$g^{-1}(w)$ under the projection
\[ W_n^+ \longrightarrow W_n  \]
with kernel $\langle e_{m+1} \rangle$. Thus $h(g^{-1}(w)) = h(\pi'(g)^{-1} \cdot w)$. The action is as promised.

The last statement follows by identifying~$U_m \cong W_m \cong V_m$ using the canonical bases at our disposal, and appealing to the well-known isomorphism $\Hom(V_m, V_m) \cong V_m \otimes V_m^*$, where~$V_m^* = \Hom(V_m, \f_p)$.
\end{proof}

\begin{coro}
The centralizer of~$S_n$ in~$\U_n(\f_p)$ is 
\[ \sC(S_n) = \{ 1 + h : h(U_n) = 0 \, , \quad h(W_n) \subset U_n^+ \} \, .   \]
So when~$n$ is even, we have~$\sC(S_n) = S_n$, and when~$n$ is odd, the non-zero entries of the elements of~$\sC(S_n)$ are in the top-right $(m+1) \times (m+1)$-corner (and on the diagonal).
\end{coro}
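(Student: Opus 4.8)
Since $S_n$ is abelian (by the preceding Lemma identifying $S_n\cong\Hom(W_m,U_m)$), we have $S_n\subseteq\sC(S_n)$, so the plan is to compute $\sC(S_n)/S_n$, which is exactly the kernel $N$ of the conjugation homomorphism $\rho\colon G=\U_n/S_n\to\operatorname{Aut}(S_n)$, and then to translate the answer back into matrix form. The main input will be Proposition~\ref{prop-S-is-tensor-product}, which describes $\rho$ completely.

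First I would record an elementary lemma: the action of $\U_m\times\U_m$ on $\Hom(V_m,V_m)\cong V_m\otimes V_m^*$ by $(g_1,g_2)\cdot h=g_1\circ h\circ g_2^{-1}$ is faithful. Indeed, if $(g_1,g_2)$ acts trivially, then evaluating on $h=\id$ gives $g_1=g_2$, after which $g_1$ commutes with every endomorphism of $V_m$, hence is a scalar matrix; being unipotent, $g_1=\id$. By Proposition~\ref{prop-S-is-tensor-product}, in the case $n=2m$ the map $\rho$ is precisely this faithful action, so $N=\{1\}$ and $\sC(S_n)=S_n$; with the convention $U_n^+=U_n$ this is the asserted formula, and the statement for $n$ even follows at once.

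For $n=2m+1$, Proposition~\ref{prop-S-is-tensor-product} shows that $\rho$ factors as $G\cong\U_{m+1}\times\U_{m+1}\xrightarrow{(\pi,\pi')}\U_m\times\U_m\to\operatorname{Aut}(S_n)$, the second arrow being faithful by the lemma, whence $N=\ker(\pi)\times\ker(\pi')$. It then remains to unwind this through the quotient map $\U_n\to G$. Here I would use that this map is $g\mapsto(g^{\mathrm{TL}},g^{\mathrm{BR}})$, where $g^{\mathrm{TL}}$ and $g^{\mathrm{BR}}$ are the principal $(m+1)\times(m+1)$ submatrices of $g$ on rows and columns $1,\dots,m+1$ and $m+1,\dots,2m+1$ respectively (one checks as usual that this is a homomorphism with kernel exactly $S_n$, which is also implicit in the proof of that Proposition), and that under the identifications $G_1,G_2\cong\U_{m+1}$ the maps $\pi,\pi'$ amount to passing to the top-left, respectively bottom-right, $m\times m$ submatrix. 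Thus $g\in\sC(S_n)$ if and only if $g_{ij}=0$ for all $1\le i<j\le m$ and for all $m+2\le i<j\le 2m+1$; writing $g=1+h$, this says exactly $h(U_n)=0$ and $h(W_n)\subseteq U_n^+$ (the constraint $h(e_{m+1})\in U_n$ being automatic since $h$ is strictly upper triangular). This yields the formula for $\sC(S_n)$, and the description of its non-zero entries as lying in the top-right $(m+1)\times(m+1)$ corner is then immediate.

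The only slightly delicate point is the last step: correctly tracking, in the non-split case $n=2m+1$, which entries of a matrix in $\U_n$ are detected by the composites $\U_n\to\U_{m+1}\xrightarrow{\pi}\U_m$ and $\U_n\to\U_{m+1}\xrightarrow{\pi'}\U_m$. This is pure index bookkeeping rather than a genuine obstacle; the conceptual heart, the faithfulness lemma, is immediate.
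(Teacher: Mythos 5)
Your proof is correct and follows essentially the same route as the paper's: identify $\sC(S_n)/S_n$ with the kernel of the conjugation action of $G$ on $S_n\cong V_m\otimes V_m^*$, invoke Proposition~\ref{prop-S-is-tensor-product} to see that this kernel is trivial for $n=2m$ and equals $\ker(\pi)\times\ker(\pi')$ for $n=2m+1$, and then pull back to $\U_n$. The only difference is that the paper states the faithfulness of the $\U_m\times\U_m$-action on $V_m\otimes V_m^*$ as clear, while you supply the short argument (evaluate on $h=\id$, then use that the center of $\operatorname{End}(V_m)$ is the scalars); the remaining bookkeeping matches the paper.
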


For example when~$n=5$, the elements of~$\sC(S_5)$ have the shape
\[ \left(\begin{array}{ccccc}
1  & 0 &*  &*  &*   \\
0  & 1 &*  &*  &*   \\
0  & 0 & 1 &*  & *  \\
0  & 0 & 0 & 1 & 0  \\
0  & 0 & 0 & 0 & 1   
\end{array}\right) \, .  \]

\begin{proof}
The action of~$\U_m \times \U_m$ on~$V_m \otimes V_m^*$ is clearly faithful, and the case~$n=2m$ follows from this (and the proposition). When~$n=2m+1$, the kernel of the action of~$G_1 \times G_2 \cong \U_{m+1} \times \U_{m+1}$ on~$V_m \otimes V_m^*$ is, this time, $\ker(\pi, \pi')$. In fact, if~$N = \ker(\pi)$ and~$N' = \ker(\pi')$, then~$\ker(\pi, \pi') = N_1 \times N_2'$, in obvious notation. The inverse image of~$\ker(\pi, \pi')$ under the quotient map~$\U_n \longrightarrow \U_n/S_n$ is~$\sC(S_n)$, and this gives the corollary.
\end{proof}

\section{Some $1$-cocycles} \label{sec-some-1-cocycles}

We have introduced the projection 
\[ \pi \colon \U_{m+1} \longrightarrow \U_m \, .   \]
We put~$N= \ker(\pi)$, which consists of those elements of~$\U_{m+1}(\f_p)$ whose non-zero coefficients are in the last column (and on the diagonal). For~$m=2$ (still thinking of~$n= 2m+1 = 5$) these are the elements of the form 
\[ \left(\begin{array}{ccc}
  1 & 0 & * \\
  0 & 1 & * \\
  0 & 0 & 1
\end{array}\right) \, .   \]

\begin{lem} \label{lem-Um-is-semidirect}
\begin{enumerate}
\item $N$ is elementary abelian, of order~$p^{m}$. More precisely, the map $N \to V_m$, mapping an element of~$N$ to its last column with the last entry deleted, is a linear isomorphism.
\item The group~$\U_{m+1}$ is a semi-direct product: $\U_{m+1} = N \rtimes \U_m$. The section~$\U_m \to \U_{m+1}$ inserts the matrix in the top-left corner.
  \item The action of~$\U_{m}$ on~$N$ by conjugation agrees with the natural action of~$\U_m$ on~$V_m$, when we identify the vector spaces~$N$ and~$V_m$ as in (1).
\end{enumerate}
\end{lem}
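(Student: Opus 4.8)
The plan is to verify each of the three claims by direct matrix computation, working with $\U_{m+1}(\f_p)$ and its subgroup $N$ of matrices whose only (possibly) non-zero off-diagonal entries lie in the last column. I would set up notation: for a vector $v = (v_1, \ldots, v_m)^T \in V_m$, let $n(v) \in \U_{m+1}$ be the matrix $I + E(v)$ where $E(v)$ has $i$-th row, last column entry equal to $v_i$ for $1 \le i \le m$, and all other entries zero. The map $v \mapsto n(v)$ is the inverse of the map described in (1).

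For part (1), I would observe that $E(v) E(w) = 0$ for any $v, w$, since the only non-zero entries of $E(v)$ are in the last column, while the only non-zero entries of $E(w)$ are in rows $1, \ldots, m$ which get killed on multiplication because $E(v)$ is supported on column $m+1$ and rows $1,\dots,m$ — more precisely the product picks up the $(m+1)$-th row of $E(w)$, which is zero. Hence $n(v) n(w) = I + E(v) + E(w) = n(v+w)$, so $N$ is elementary abelian of order $p^m$ and $v \mapsto n(v)$ is a group isomorphism $V_m \xrightarrow{\sim} N$; this is the linear isomorphism claimed (its inverse reads off the last column minus the bottom entry).

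For part (2), I would note that $\pi$ restricted to the image of the top-left corner section $s\colon \U_m \to \U_{m+1}$ is the identity, so $s$ is a genuine section of $\pi$; and $N = \ker \pi$ is normal since $\pi$ is a homomorphism. It remains to check $N \cap s(\U_m) = 1$, which is immediate as $s(\U_m)$ consists of matrices with last column equal to $e_{m+1}$, and $N \cdot s(\U_m) = \U_{m+1}$ by a dimension/order count (or directly: any $g \in \U_{m+1}$ can be written, reading off its last column, as $n(v) \cdot s(\pi(g))$ for the appropriate $v$). Hence $\U_{m+1} = N \rtimes \U_m$.

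For part (3), I would compute $s(g)\, n(v)\, s(g)^{-1}$ for $g \in \U_m$. Writing $s(g) = I + G'$ where $G'$ is $g - I$ placed in the top-left $m\times m$ block (and zero in the last row/column), one gets $s(g) n(v) s(g)^{-1} = I + s(g) E(v) s(g)^{-1}$. Since $E(v)$ is supported on the last column, $E(v) s(g)^{-1} = E(v)$ (as $s(g)^{-1}$ fixes $e_{m+1}$, being upper triangular with the last column standard), and $s(g) E(v)$ replaces the last column vector $(v,0)^T$ by $(g v, 0)^T$. Thus conjugation by $s(g)$ sends $n(v)$ to $n(gv)$, which is exactly the natural action of $\U_m$ on $V_m$ under the identification from (1). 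I do not anticipate a genuine obstacle here; the only mild care needed is bookkeeping of which block each entry lives in, and making sure the ``last row is zero'' structure of $E(v)$ is used correctly so that all the stray products vanish.
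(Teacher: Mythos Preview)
Your proposal is correct and is precisely the direct matrix verification the paper has in mind; the paper's own proof is simply the word ``Easy.'' Your write-up is a careful fleshing-out of that omitted computation, with no differences in approach.
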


\begin{proof}
Easy.
\end{proof}

Let us view~$V_m \cong N$ as a~$\U_{m+1}$-module (using~$\pi$, or equivalently using the conjugation action on~$N \subset \U_{m+1}$). What we have gained here is a~$1$-cocycle for~$\U_{m+1}$, with values in~$N$, in virtue of the well-known:

\begin{lem} \label{lem-semidirect-gives-cocycle}
Let~$G$ be a finite group which is a semi-direct product~$G= N \rtimes H$, where~$N$ is abelian. Any element~$g \in G$ may thus be written uniquely~$g = n_g h_g$ with~$n_g \in N$, $h_g \in H$. The map $\phi \colon G \to N$ defined by~$\phi (g) = n_g$ is then a~$1$-cocycle, that is, it satisfies 
\[ \phi (g_1 g_2) = \phi (g_1) + g_1 \cdot \phi (g_2) \, ,   \]
where the action employed is the conjugation.

Moreover, the restriction of~$\phi $ to~$N$ is the identity, and the restriction of~$\phi $ to~$H$ is trivial.
\end{lem}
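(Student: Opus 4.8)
The plan is to verify directly the cocycle identity, the normalization on $N$, and the vanishing on $H$ by unwinding the unique decomposition $g = n_g h_g$. First I would record that uniqueness of this decomposition is automatic: since $G = N \rtimes H$ with $N \cap H = \{1\}$ and $NH = G$, every $g \in G$ is $n_g h_g$ for a unique pair $(n_g, h_g) \in N \times H$, so $\phi(g) := n_g$ and $h_g$ are well-defined functions of $g$. The only genuine computation is the multiplicativity: given $g_1 = n_1 h_1$ and $g_2 = n_2 h_2$, I would compute
\[
  g_1 g_2 = n_1 h_1 n_2 h_2 = n_1 (h_1 n_2 h_1^{-1}) h_1 h_2 = \big(n_1 \cdot (h_1 \cdot n_2)\big)(h_1 h_2),
\]
where $h_1 \cdot n_2 := h_1 n_2 h_1^{-1} \in N$ is the conjugation action. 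Since $N$ is abelian I write its operation additively, so the first factor is $n_1 + h_1 \cdot n_2 \in N$ and the second is $h_1 h_2 \in H$; by uniqueness of the decomposition this means $\phi(g_1 g_2) = n_1 + h_1 \cdot n_2 = \phi(g_1) + g_1 \cdot \phi(g_2)$, once one notes that the action of $g_1 = n_1 h_1$ on $N$ by conjugation coincides with that of $h_1$ (because $N$ is abelian, so $n_1$ acts trivially by conjugation on $N$). That last remark is the one subtlety worth stating explicitly; it is the reason the formula is phrased with the action of $g_1$ rather than of $h_1$.

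For the final two assertions: if $g = n \in N$ then $g = n \cdot 1$ with $n \in N$, $1 \in H$, so by uniqueness $\phi(n) = n$, i.e. $\phi|_N = \id$; and if $g = h \in H$ then $g = 1 \cdot h$, so $\phi(h) = 1$, i.e. $\phi|_H$ is trivial. These are immediate and require no calculation beyond invoking uniqueness again.

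There is really no serious obstacle here — the statement is standard — so the ``hard part'' is merely cosmetic: being careful about the convention (left versus right action, additive versus multiplicative notation in $N$) so that the displayed cocycle identity comes out in exactly the form stated, and flagging the point that $g_1$ and $h_1$ act identically on the abelian group $N$. I would keep the proof to a couple of lines, essentially the display above together with the two one-line special cases, which is presumably why the authors will simply write ``well-known'' or give an equally terse argument.
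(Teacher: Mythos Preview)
Your proof is correct and is exactly the standard direct verification one would expect; the paper in fact gives no proof at all, treating the lemma as well-known, so your anticipation in the final paragraph is spot on. The one subtlety you flagged---that $g_1$ and $h_1$ act identically on the abelian group $N$, so the cocycle identity can be written with $g_1 \cdot \phi(g_2)$---is precisely the point worth making explicit, and nothing further is needed.
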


In our case, with~$G= \U_3$, we have 
\[ \left(\begin{array}{ccc}
  1 & 0 & x \\
  0 & 1 & y \\
  0 & 0 & 1
\end{array}\right) \left(\begin{array}{ccc}
  1 & a & 0 \\
  0 & 1 & 0 \\
  0 & 0 & 1
\end{array}\right) = \left(\begin{array}{ccc}
  1 & a & x \\
  0 & 1 & y \\
  0 & 0 & 1
\end{array}\right) \, ,  \]
so that 
\[ \phi \left(\begin{array}{ccc}
  1 & a & x \\
  0 & 1 & y \\
  0 & 0 & 1
\end{array}\right) = \left(\begin{array}{c}
x \\ y
\end{array}\right) \in V_2 \cong N \, .   \]
In general, the lemma gives the $1$-cocycle~$\phi \in \h^1(\U_{m+1}, V_m)$, which maps an element of~$\U_{m+1}$ its last column, with the last entry deleted.

We proceed to analyse similarly the group~$N'= \ker(\pi')$; for~$m= 3$ this is the group of matrices of the form 
\[ \left(\begin{array}{ccc}
  1 & * & * \\
  0 & 1 & 0 \\
  0 & 0 & 1
\end{array}\right) \, .   \]
Initially, the situation is merely dual to the previous one. Recall that we write~$V_m$ for the vector space~$\f_p^m$ of column-matrices; we will now write~$V_m^*$ for the space of row-matrices (of dimension~$m$ over~$\f_p$).

\begin{lem} \label{lem-Um-is-semidirect-dual}
\begin{enumerate}
\item $N'$ is elementary abelian, of order~$p^{m}$. More precisely, the map $N' \to V_m^*$, mapping an element of~$N'$ to its first row with the first entry deleted, is a linear isomorphism.
\item The group~$\U_{m+1}$ is a semi-direct product: $\U_{m+1} = N' \rtimes \U_m$. The section~$\U_m \to \U_{m+1}$ inserts the matrix in the bottom-right corner.
  \item The action of~$\U_{m}$ on~$N'$ by conjugation agrees with the natural action of~$\U_m$ on~$V_m^*$, when we identify the vector spaces~$N'$ and~$V_m^*$ as in (1).
\end{enumerate}
\end{lem}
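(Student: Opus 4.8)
The plan is to follow the proof of Lemma~\ref{lem-Um-is-semidirect} almost verbatim, changing only the variance. First I would describe $N' = \ker(\pi')$ explicitly: since $\pi'$ retains only the top row and the leftmost column, an element $g \in N'$ satisfies $g_{ij} = \delta_{ij}$ for all $i, j$ with $2 \le i, j \le m+1$; as $g$ is upper unipotent, its only possibly-nonzero off-diagonal entries are $g_{12}, g_{13}, \ldots, g_{1,m+1}$. Writing $g = 1 + h$, the endomorphism $h$ kills $e_1$ and has image in $\langle e_1 \rangle$, hence $h_1 h_2 = 0$ for any two such, so $(1+h_1)(1+h_2) = 1 + h_1 + h_2$. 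This shows at once that $N'$ is elementary abelian and that $g \mapsto (g_{12}, \ldots, g_{1,m+1})$ is a group isomorphism onto $\f_p^m = V_m^*$ (row vectors); linearity and bijectivity are immediate. This is part~(1).

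For part~(2), let $\iota \colon \U_m \to \U_{m+1}$ be the section placing a matrix in the bottom-right $m \times m$ block and filling the remaining top row and left column with the entries of the identity. Then $\pi' \circ \iota = \id$, so $\iota$ is injective, and $N' \cap \iota(\U_m) = \{1\}$, because an element of $\iota(\U_m)$ has first row $(1, 0, \ldots, 0)$ and thus equals the identity if it also lies in $N'$. Since $N' = \ker(\pi')$ is normal and $|N'| \cdot |\iota(\U_m)| = p^{m} \cdot p^{\binom m2} = p^{\binom{m+1}2} = |\U_{m+1}|$, we get $\U_{m+1} = N' \rtimes \iota(\U_m)$, which is part~(2).

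Part~(3) is a one-line matrix computation. Identifying $\langle e_2, \ldots, e_{m+1} \rangle$ with $V_m$ so that $\iota(g_0)$ acts as $1 \oplus g_0$, the element $1 + h$ of $N'$ corresponds, under the isomorphism of~(1), to the functional $\xi \in V_m^*$ with $h(v) = \xi(\bar v)\, e_1$, where $\bar v$ is the component of $v$ in $\langle e_2, \ldots, e_{m+1} \rangle$. Then $\iota(g_0)\,(1+h)\,\iota(g_0)^{-1} = 1 + \iota(g_0)\, h\, \iota(g_0)^{-1}$, and since $\iota(g_0)$ fixes $e_1$ and preserves $\langle e_2, \ldots, e_{m+1} \rangle$, one finds $(\iota(g_0)\, h\, \iota(g_0)^{-1})(v) = \xi(g_0^{-1} \bar v)\, e_1 = (g_0 \cdot \xi)(\bar v)\, e_1$. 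Hence conjugation by $\iota(g_0)$ sends $\xi$ to $g_0 \cdot \xi$, the natural (contragredient) action of $\U_m$ on $V_m^*$, which is part~(3).

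There is no real obstacle: the statement is the reflection across the antidiagonal of Lemma~\ref{lem-Um-is-semidirect}, and one could alternatively transport that lemma through the automorphism $g \mapsto w\,(g^{-1})^T w$ of $\U_{m+1}$, with $w$ the antidiagonal permutation matrix — though verifying how this automorphism intertwines $\pi$ with $\pi'$ and $N$ with $N'$ is no shorter than the direct check above. The only point needing the slightest care is the bookkeeping of variance, namely arranging the identification $N' \cong V_m^*$ in~(1) so that the conjugation action in~(3) comes out as the contragredient action on row vectors and not its inverse.
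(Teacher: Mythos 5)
Your proof is correct and is exactly the direct verification the paper declines to spell out: Lemma~\ref{lem-Um-is-semidirect} is dispatched with ``Easy,'' and the paper asserts that this lemma is ``merely dual to the previous one,'' which is the anti-transpose transport you mention at the end. All three parts check out, including the variance bookkeeping in~(3): in row-vector notation your formula reads $g_0 \cdot \xi = \xi\, g_0^{-1}$, which one can confirm by conjugating $\begin{pmatrix} 1 & x & y \\ 0 & 1 & 0 \\ 0 & 0 & 1 \end{pmatrix}$ by $\begin{pmatrix} 1 & 0 & 0 \\ 0 & 1 & a \\ 0 & 0 & 1 \end{pmatrix}$ in $\U_3$ and obtaining $(x, y) \mapsto (x, y - ax)$, as required.
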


Now, we can decide to apply Lemma~\ref{lem-semidirect-gives-cocycle}, but the resulting cocycle will have a more complicated explicit form. This is illustrated with~$m=3$ by the following computation:
\[ \left(\begin{array}{ccc}
  1 & 0 & 0 \\
  0 & 1 & a \\
  0 & 0 & 1
\end{array}\right) \left(\begin{array}{ccc}
  1 & x & y \\
  0 & 1 & 0 \\
  0 & 0 & 1
\end{array}\right) = \left(\begin{array}{ccc}
  1 & x & y \\
  0 & 1 & a \\
  0 & 0 & 1
\end{array}\right) \, .  \]
This shows that an element~$g \in \U_3$ can be very simply decomposed as~$g = hn$ with~$n \in N'$ and~$h \in \U_2$; note how this is reversed. From this of course we may write~$g = hnh^{-1}h = (h \cdot n) h$, so the cocycle produced by Lemma~\ref{lem-semidirect-gives-cocycle} is~$g \mapsto h \cdot n = g \cdot n$, resulting in a formula which is a little less pleasant than that for~$\phi $. In general, we have the following lemma, which is not as well-known as Lemma~\ref{lem-semidirect-gives-cocycle}, but whose proof is just as straightforward.

\begin{lem}
Let~$G$ be a finite group which is a semi-direct product~$G= N \rtimes H$, where~$N$ is abelian. Any element~$g \in G$ may thus be written uniquely~$g = h_gn_g$ with~$n_g \in N$, $h_g \in H$. The map $\psi \colon G \to N$ defined by~$\phi (g) = n_g$ satisfies 
\[ \psi (g_1 g_2) = g_2^{-1}\psi (g_1) +  \psi (g_2) \, ,   \]
where the action employed is the conjugation. The map~$g \mapsto g \cdot \psi (g)$ is a $1$-cocycle, and indeed it is that produced by Lemma~\ref{lem-semidirect-gives-cocycle}. Also, $g \psi (g) = - \psi (g^{-1})$ for all~$g \in G$. 

Moreover, the restriction of~$\psi $ to~$N$ is the identity, and the restriction of~$\psi $ to~$H$ is trivial.
\end{lem}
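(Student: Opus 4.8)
The plan is to argue directly from the uniqueness of the decomposition $g = h_g n_g$ with $h_g \in H$ and $n_g \in N$, writing the group law of the abelian group $N$ additively. First I would establish the displayed identity for $\psi$ by multiplying two decompositions: with $g_1 = h_1 n_1$ and $g_2 = h_2 n_2$, inserting $h_2^{-1}h_2$ gives $g_1 g_2 = (h_1 h_2)\,((h_2^{-1}\cdot n_1)\, n_2)$, where $h_2^{-1}\cdot n_1$ is the conjugation action and $(h_2^{-1}\cdot n_1)\,n_2 \in N$ because $N$ is abelian. Uniqueness of the decomposition then forces $h_{g_1 g_2} = h_1 h_2$ and $\psi(g_1 g_2) = h_2^{-1}\cdot n_1 + n_2$. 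To turn $h_2^{-1}$ into $g_2^{-1}$, I would note that conjugation by $g_2^{-1} = n_2^{-1}h_2^{-1}$ and conjugation by $h_2^{-1}$ agree on elements of $N$, since conjugation by an element of $N$ is trivial on the abelian group $N$; hence $\psi(g_1 g_2) = g_2^{-1}\cdot\psi(g_1) + \psi(g_2)$.

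Next I would dispatch the elementary assertions: if $g = n \in N$ then $g = 1\cdot n$, so $h_g = 1$ and $\psi(n) = n$; if $g = h \in H$ then $g = h\cdot 1$, so $n_g = 1$ and $\psi(h) = 0$. Then I would compute $g\cdot\psi(g)$, meaning $g n_g g^{-1}$: substituting $g = h_g n_g$ and using that $N$ is abelian yields $g n_g g^{-1} = h_g n_g h_g^{-1} = h_g\cdot n_g$. On the other hand, $g^{-1} = n_g^{-1}h_g^{-1} = h_g^{-1}\,(h_g\cdot n_g^{-1})$ exhibits the decomposition of $g^{-1}$, so $\psi(g^{-1}) = h_g\cdot(-n_g) = -(h_g\cdot n_g) = -(g\cdot\psi(g))$, which is the stated identity $g\psi(g) = -\psi(g^{-1})$. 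For the claim that $\phi\colon g \mapsto g\cdot\psi(g)$ is a $1$-cocycle, I would simply apply the action of $g_1 g_2$ to the identity for $\psi(g_1 g_2)$: $\phi(g_1 g_2) = (g_1 g_2)\cdot(g_2^{-1}\cdot\psi(g_1) + \psi(g_2)) = g_1\cdot\psi(g_1) + g_1\cdot(g_2\cdot\psi(g_2)) = \phi(g_1) + g_1\cdot\phi(g_2)$, matching the formula in Lemma~\ref{lem-semidirect-gives-cocycle}. Finally, to identify $\phi$ with the cocycle produced by that lemma, which sends $g$ to the $N$-component $n'_g$ in the decomposition $g = n'_g h'_g$, I would rewrite $g = h_g n_g = (h_g n_g h_g^{-1})\,h_g = (h_g\cdot n_g)\,h_g$, so $n'_g = h_g\cdot n_g = \phi(g)$.

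I do not expect any genuine obstacle here: every step is a short manipulation with the two semidirect-product decompositions. The only point requiring a little care is keeping the multiplicative notation in $G$ and the additive notation in $N$ straight, together with the repeated use of the fact that conjugation by an element of $N$ acts trivially on $N$ — this is exactly what makes the replacement of $h_g^{\pm 1}$ by $g^{\pm 1}$ (and conversely) legitimate throughout.
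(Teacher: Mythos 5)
Your argument is correct and complete; every claim in the lemma is verified by direct manipulation with the two decompositions, exactly as the paper intends when it declares the proof ``just as straightforward'' and omits it. The one step worth having made explicit — that conjugation by $g_2^{-1}$ and by $h_2^{-1}$ agree on $N$ because $N$ is abelian, so the formula can be stated with $g_2^{-1}$ in place of $h_2^{-1}$ — is indeed the only non-mechanical observation, and you handle it cleanly.
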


In our situation, we obtain~$\psi \colon \U_{m+1} \to V_m^*$, which maps an element of~$\U_m$ to its first row, with the first entry deleted; the map~$g \mapsto g \cdot \psi (g)$ is a~$1$-cocycle.

We point out that~$\phi $ and~$\psi $ play exactly symmetrical roles, but the definition of ``$1$-cocycle'' is not correspondingly symmetrical. One could use ``homogeneous cocycles'' instead, to regain the symmetry, but the expressions would again be more complicated.

We now define $1$-cocycles for~$\U_{m+1} \times \U_{m+1}$ by ``inflating'' using the projections, namely we define $\phi_1(g_1, g_2) = \phi (g_1)$, and~$\phi_2(g_1, g_2) = g_2 \psi (g_2)$, so that~$\phi_1 \in \h^1(\U_{m+1} \times \U_{m+1}, V_m)$ and~$\phi_2 \in \h^1(\U_{m+1} \times \U_{m+1}, V_m^*)$.

From Proposition~\ref{prop-S-is-tensor-product}, it makes sense to talk about the cup-product 
\[ \phi_1 \cupp \phi_2 \in \h^2(\U_{m+1} \times \U_{m+1}, S_n) \, ,   \]
and this is the object of the next section.

\section{The cup-product} \label{sec-the-cup-product}

\begin{thm} \label{thm-coho-class-is-cup-product}
Let~$n= 2m+1$. The cohomology class of the extension 
\[ \begin{CD}
0 @>>> S_n @>>> \U_n(\f_p) @>>> \U_{m+1}(\f_p) \times \U_{m+1}(\f_p) @>>> 1
\end{CD}
  \]
  is~$\phi_1 \cupp \phi_2 \in \h^2(\U_{m+1} \times \U_{m+1}, S_n)$, where~$\phi_1$ and~$\phi_2$ were introduced in the last section.
\end{thm}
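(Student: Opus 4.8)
The plan is to verify directly, using explicit $2$-cocycle representatives, that the extension class of $(\dagger)$ agrees with $\phi_1 \cupp \phi_2$. First I would fix a set-theoretic section $s \colon \U_{m+1} \times \U_{m+1} \to \U_n$ of the quotient map, namely the one built from the two corner-embeddings $G_1 \hookrightarrow \U_n$ and $G_2 \hookrightarrow \U_n$ described before Proposition~\ref{prop-S-is-tensor-product}: given $(g_1, g_2)$, send it to the product $\iota_1(g_1)\iota_2(g_2)$, where $\iota_1$ copies $g_1$ into the top-left $(m+1)\times(m+1)$ block and $\iota_2$ copies $g_2$ into the bottom-right block (with identity entries elsewhere). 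The associated $2$-cocycle is $c\big((g_1,g_2),(g_1',g_2')\big) = s(g_1,g_2)\, s(g_1',g_2')\, s(g_1g_1', g_2g_2')^{-1} \in S_n$, which I would compute by straightforward block matrix multiplication.

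The key point is that $\iota_1(g_1)$ and $\iota_2(g_2)$ almost commute: their commutator lands in $S_n$ because the top-left block of $\iota_1$ and the bottom-right block of $\iota_2$ overlap only in the single entry at position $(m+1,m+1)$, which is $1$ for both. So $s(g_1,g_2) s(g_1',g_2')$ differs from $\iota_1(g_1 g_1') \iota_2(g_2 g_2')$ precisely by the "crossing term" coming from pushing $\iota_2(g_2)$ past $\iota_1(g_1')$; tracking the off-diagonal entries that appear in the top-right $m\times m$ corner, I expect to find $c\big((g_1,g_2),(g_1',g_2')\big)$ equal (under the identification $S_n \cong \Hom(W_m, U_m) \cong V_m \otimes V_m^*$ of Proposition~\ref{prop-S-is-tensor-product}) to a rank-one tensor built from the last column of $g_1$ (with last entry deleted) — i.e.\ $\phi(g_1) = \phi_1(g_1,g_2)$ — and the first row of $g_2'$ (with first entry deleted), suitably twisted by the $g_2$-action — i.e.\ $g_2 \psi(g_2') $, which is the relevant ingredient of $\phi_2$. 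Matching this against the standard formula for a cup-product of two $1$-cocycles with coefficients paired via $V_m \otimes V_m^* \to S_n$, namely $(\phi_1 \cupp \phi_2)\big((g_1,g_2),(g_1',g_2')\big) = \phi_1(g_1,g_2) \otimes \big((g_1,g_2)\cdot \phi_2(g_1',g_2')\big)$, should give the claimed equality of cohomology classes, since any discrepancy will be visibly a coboundary.

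The main obstacle is bookkeeping: getting the indices, the left/right conventions for $\phi$ versus $\psi$ (the asymmetry flagged in Section~\ref{sec-some-1-cocycles}), and the precise twist by the $G_2$-action all to line up, so that the crossing term is exactly $\phi_1 \cupp \phi_2$ on the nose rather than merely cohomologous to it in a way that still needs an explicit coboundary to be exhibited. I would organize this by first treating the case $m = 2$ (i.e.\ $n = 5$) with fully written-out $5\times 5$ matrices to pin down the correct normalization of the pairing $V_2 \otimes V_2^* \to S_5$ and the correct form of $\phi_2$, and then observe that the general computation is formally identical block by block. A secondary subtlety is making sure the definition $\phi_2(g_1,g_2) = g_2\psi(g_2)$ — with the extra action by $g_2$ — is the one that makes the cup-product formula come out right; this is exactly why that particular "inflation" was chosen in the previous section, and the computation here is where that choice gets justified.
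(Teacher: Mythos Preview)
Your approach is essentially the paper's: compute the $2$-cocycle from an explicit section and match it against the cup-product formula. The paper's section is precisely the block matrix with $g_1$ in the top-left, $g_2$ in the bottom-right, overlapping at position $(m+1,m+1)$, and $0$ in the top-right $m\times m$ corner --- which, in your language, is $\iota_2(g_2)\,\iota_1(g_1)$ rather than $\iota_1(g_1)\,\iota_2(g_2)$.

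This ordering matters for the bookkeeping you flag. With your section $s=\iota_1\iota_2$, the crossing term comes from commuting $\iota_2(g_2)$ past $\iota_1(g_1')$, so it involves $\phi(g_1')$ and $\psi(g_2)$, \emph{not} $\phi(g_1)$ and $\psi(g_2')$ as you predict. Concretely, one finds (in the top-right block) $c(g,g') = -\,\pi(g_1)\,\phi(g_1')\;\psi(g_2)\,\pi'(g_2)^{-1}$, which under the identification $S_n\cong V_m\otimes V_m^*$ reads $-\,\big(g\cdot\phi_1(g')\big)\otimes\phi_2(g)$. This is $-\operatorname{swap}(\phi_2\cupp\phi_1)$, equal to $\phi_1\cupp\phi_2$ in cohomology by graded-commutativity, so your plan still succeeds --- but only after this extra step, not on the nose.

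By contrast, the paper's section $\iota_2\iota_1$ forces the crossing term to come from commuting $\iota_1(g_1)$ past $\iota_2(g_2')$, and the computation yields $c(g,g')=\phi(g_1)\,\psi(g_2')\,\pi'(g_2g_2')^{-1}=\phi_1(g)\otimes g\cdot\phi_2(g')$ exactly. If you simply reverse the order in your section, you recover the paper's computation verbatim and avoid the coboundary/anticommutativity detour.
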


This section is entirely devoted to the (largely computational) proof. We use the notation~$G = \U_{m+1} \times \U_{m+1}$, writing~$G= G_1 \times G_2$, and we start by defining a set-theoretic section~$s \colon G \to \U_n$. Suppose 
\[ g_1= \left(\begin{array}{c|c}
  h_1 & C \\
  \hline
  0 & 1
\end{array}\right) \qquad\textnormal{and}\qquad g_2 = \left(\begin{array}{c|c}
  1 & R \\
  \hline
  0 & h_2
\end{array}\right) \]
are elements of~$\U_{m+1}$, where~$h_1, h_2 \in \U_m$, and~$C$ is a column, while~$R$ is a row. We point out that~$h_1= \pi(g_1)$, $C= \phi (g_1)$, $h_2= \pi'(g_2)$ and~$R= \psi (g_2)$, in the notation introduced previously.

Then we put 
\[ s(g_1, g_2) = \left(\begin{array}{c|c|c}
  h_1 & C & 0 \\
  \hline
  0 & 1 & R \\
  \hline
  0 & 0 & h_2
\end{array}\right) \, .   \]
We observe a useful property at once. Suppose that~$g_1', g_2', h_1', h_2', C', R'$ are similar elements, related as above, and that~$X, Y$ are arbitrary $m \times m$-matrices, then 
\[
\left(\begin{array}{c|c|c}
  h_1 & C & X \\
  \hline
  0 & 1 & R \\
  \hline
  0 & 0 & h_2
\end{array}\right) \left(\begin{array}{c|c|c}
  h_1' & C' & Y \\
  \hline
  0 & 1 & R' \\
  \hline
  0 & 0 & h'_2
\end{array}\right) = \left(\begin{array}{c|c}
  g_1 g_1' & h_1 Y +CR' + X h_2' \\
  \hline
  0 & g_2 g_2'
\end{array}\right) \, .  \tag{*}
  \]
Here we are cheating with the notation a little: on the right-hand-side, this is not really a block-matrix notation, since~$g_1 g_1'$ and~$g_2 g_2'$ are in fact overlapping. Since they are both elements of~$\U_{m+1}$, the bottom-right entry of~$g_1 g_1'$ is a~$1$, as is the top-left entry of~$g_2 g_2'$, so the overlap makes sense.

We have a bijection of sets~$S_n \times G \to \U_n$ defined by~$(m, g) \mapsto m \, s(g)$. The inverse bijection will be written~$\sigma \mapsto (\sigma_S, \sigma_G)$. If~$q \colon \U_n \to G$ is the quotient map, we have of course $\sigma_G = q(\sigma ) $, while $\sigma_S = \sigma s(q(\sigma ))^{-1}$. Here it seems more intuitive to write~$\sigma_0$ for~$s(q(\sigma ))$, because for
\[ \sigma = \left(\begin{array}{c|c|c}
  h_1 & C & X \\
  \hline
  0 & 1 & R \\
  \hline
  0 & 0 & h_2
\end{array}\right), \qquad\textnormal{we have}~ \sigma_0 = \left(\begin{array}{c|c|c}
  h_1 & C & 0 \\
  \hline
  0 & 1 & R \\
  \hline
  0 & 0 & h_2
\end{array}\right) \, .   \]
In this notation, we have~$\sigma_S = \sigma \sigma_0^{-1}$.

The multiplication of~$\U_n$, transported to~$S_n \times G$ {\em via} our bijection, is of the form 
\[ (m, g) (m', g') = (m + g \cdot m' + c(g, g'), gg') \, ,  \]
using an additive notation on~$S_n$ here. The cocycle~$c$ is what we are after, and for~$m = m' = 0$ we find, as usual, that 
\[ c(g, g') = [s(g) s(g')]_S \, ,   \]
so in the end 
\[ c(g, g') = [s(g) s(g')] \, [s(g) s(g')]_0^{-1} \, .   \]

Now we use (*) repeatedly, firstly to obtain, with~$g= (g_1, g_2)$, $g'= (g_1', g_2')$ and all other notation as above:
\[ s(g)s(g') = \left(\begin{array}{c|c}
  g_1 g_1' & CR' \\
  \hline
  0 & g_2 g_2'
\end{array}\right) \, , \qquad ~\textnormal{so}~ [s(g)s(g')]_0 = \left(\begin{array}{c|c}
  g_1 g_1' & 0 \\
  \hline
  0 & g_2 g_2'
\end{array}\right) \, .   \]
We look for~$[s(g) s(g')]_0^{-1}$ in the form 
\[ \left(\begin{array}{c|c}
  (g_1 g_1')^{-1} & Y \\
  \hline
  0 & (g_2 g_2')^{-1}
\end{array}\right) \, ; \]
indeed, from (*) again, we see that, in order to have $[s(g) s(g')]_0[s(g) s(g')]_0^{-1}= 1$ we only need to have 
\[ \pi(g_1g_1') Y + \phi (g_1 g_1')  \psi [(g_2 g_2')^{-1}]  = 0 \, , \tag{**}  \]
which has a unique solution~$Y$ since~$\pi(g_1g_1')$ is invertible. Finally, we compute the product $[s(g) s(g')][s(g) s(g')]_0^{-1}$ using (*) one last time; the result is a matrix in~$S_n$, and in the top-right corner we find 
\[ \pi(g_1g_1') Y + \phi (g_1 g_1')  \psi [(g_2 g_2')^{-1}] + CR' \pi'(g_2 g_2')^{-1} = CR' \pi'(g_2 g_2')^{-1} \, ,     \]
taking (**) into account.

So our final result is 
\[ c(g, g') = CR' \pi'(g_2 g_2')^{-1} = \phi(g_1) \psi(g'_2) \pi'(g_2 g_2')^{-1} \, ,   \]
which is also, thinking of~$S_n$ as~$V_m \otimes V_m^{*}$: 
\[ c(g, g') = \phi_1(g) \otimes (gg') \cdot \psi (g_2')= \phi_1(g) \otimes g \cdot \phi_2(g') \, ,  \]
which reads $c = \phi_1 \cupp \phi_2$, QED.

\section{The wreath extensions} \label{sec-wreath-extensions}

Using the work of the preceding sections, we proceed to describe some finite groups having~$\U_n$ as a quotient. Before we get to that however, we start by defining groups~$\U_{m+1} ^{(i)}$, for~$i= 1, 2$, having~$\U_{m+1}$ as a quotient, where we have fixed~$n= 2m+1$. Perhaps surprisingly, these two groups will be defined to be~$\f_p[\U_m] \rtimes \U_m$, the semidirect product with left multiplication of~$\U_m$ on the group algebra~$\f_p[\U_m]$ as the action. The point is that~$\U_{m+1} ^{(1)}$ and~$\U_{m+1} ^{(2)}$ will be equipped with very different maps to~$\U_{m+1}$, and we will consistently use upperscripts to keep the dichotomy in mind. Also, we will be interested in the product~$\U_{m+1} ^{(1)} \times \U_{m+1} ^{(2)}$, and again the upperscripts will be helpful to discuss the factors. More justification for the notation is given below. These groups will be called the first and second, respectively, {\em wreath extensions} of~$\U_{m+1}$, which makes sense since~$\f_p[\U_m] \rtimes \U_m$  can also be described as a wreath product~$C_p \wr \U_m$.

So we start with considering the~$\U_m$-module~$V_m$, which is cyclic, that is, generated by a single element, for example $v= {}^t (0, 0, \ldots, 0, 1)$. We shall consider the surjective map $\f_p[\U_m] \to V_m$ which takes~$1 \in \f_p[\U_m]$ to~$v$. With this in mind, we define~ $f_1 \colon \f_p[\U_m] \rtimes \U_m \to V_m \rtimes \U_m$ to be the homomorphism that maps~$1 \in \f_p[\U_m]$ to~$v$, and is the identity on~$\U_m$. Lemma~\ref{lem-Um-is-semidirect} allows us to identify~$\U_{m+1}$ with~$V_m \rtimes \U_m$, so we view~$f_1$ as a map~$\U_{m+1} ^{(1)} \to \U_{m+1}$. Thus, we think of~$\U_{m+1} ^{(1)}$ as ``$\U_{m+1}$ with its rightmost column enlarged to become all of~$\f_p[\U_m]$ rather than just~$V_m$''.


The second wreath extension $\U_{m+1} ^{(2)}$ will be obtained by considering~$V_m^*$, which is generated by~$v^* = (1, 0, \ldots, 0)$. This time we view~$\U_{m+1}$ as~$V_m^* \rtimes \U_m$ as in Lemma~\ref{lem-Um-is-semidirect-dual}; and we look at the  map~$f_2 \colon \f_p[\U_m] \rtimes \U_m \to V_m^* \rtimes \U_m$ which extends~$1 \mapsto v^*$. We view it as a map~$f_2 \colon \U_{m+1} ^{(2)} \to \U_{m+1}$, and we think of~$\U_{m+1} ^{(2)}$ as ``$\U_{m+1}$ with its first line enlarged to be all of~$\f_p[\U_m]$''.

We shall write~$\tilde G := \U_{m+1} ^{(1)} \times \U_{m+1} ^{(2)}$, which has a canonical map to~$G  = \U_{m+1} \times \U_{m+1}$, namely $f= (f_1, f_2)$. Using the homomorphism $\tilde G \to \U_m$ defined by $(g_1, g_2) \mapsto \pi(f_1(g_1))$, we can see~$\f_p[\U_m]$ as a~$\tilde G$-module, to be written~$\f_p[\U_m] ^{(1)}$. Using~$(g_1, g_2) \mapsto \pi'(f_2(g_2))$, we define~$\f_p[\U_m] ^{(2)}$.

The next lemma refers to the~$1$-cocycles~$\phi_1 \colon G \to  V_m$ and~$\phi_2 \colon G \to  V_m^*$ defined previously.

\begin{lem}
  There exists a~$1$-cocycle~$\tilde \phi_1 \colon \tilde G \to \f_p[\U_m] ^{(1)}$ which lifts~$\phi_1 \circ f \colon \tilde G \to V_m$. Likewise, there exists a~$1$-cocycle~$\tilde \phi_2 \colon \tilde G \to \f_p[\U_m] ^{(2)}$ which lifts~$\phi_2 \circ f$.

  In cohomological language, the map 
\[  \h^1(\tilde G, \f_p[\U_m] ^{(1)}) \longrightarrow \h^1(\tilde G, V_m) \]
induced by~$1 \mapsto v$ takes~$[\tilde \phi_1]$ to~$[f^*(\phi_1)]$, and the map
\[ \h^1(\tilde G, \f_p[\U_m] ^{(2)}) \longrightarrow \h^1(\tilde G, V_m^*)  \]
induced by~$1 \mapsto v^*$ takes~$[\tilde \phi_2]$ to~$[f^*(\phi_2)]$.
\end{lem}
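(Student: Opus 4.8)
The plan is to reduce the existence of each lift to a standard cohomological obstruction argument, and then kill that obstruction by exhibiting an explicit surjection of $\tilde G$-modules with a suitable ``largeness'' property, so that Lemma~\ref{lem-usual-lemma-rep-theory} applies. I focus on $\tilde\phi_1$; the construction of $\tilde\phi_2$ is entirely symmetric after replacing $V_m$ by $V_m^*$, $\pi$ by $\pi'$, and $f_1$ by $f_2$.

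First I would set up the obstruction. The surjection $\f_p[\U_m]^{(1)}\surj V_m$ induced by $1\mapsto v$ has kernel, call it $K^{(1)}$, a $\tilde G$-submodule of $\f_p[\U_m]^{(1)}$ (the $\tilde G$-action factoring through $\U_m$ via $(g_1,g_2)\mapsto\pi(f_1(g_1))$), giving a short exact sequence of $\tilde G$-modules
\[
0 \longrightarrow K^{(1)} \longrightarrow \f_p[\U_m]^{(1)} \longrightarrow V_m \longrightarrow 0 \, .
\]
The associated long exact sequence in cohomology of $\tilde G$ reads
\[
\h^1(\tilde G, \f_p[\U_m]^{(1)}) \longrightarrow \h^1(\tilde G, V_m) \stackrel{\delta}{\longrightarrow} \h^2(\tilde G, K^{(1)}) \, .
\]
The class $f^*(\phi_1)=[\phi_1\circ f]\in\h^1(\tilde G, V_m)$ lifts to $\h^1(\tilde G,\f_p[\U_m]^{(1)})$ precisely when $\delta(f^*(\phi_1))=0$; and any such lift is represented by a $1$-cocycle $\tilde\phi_1$, which is exactly the statement we want (the cohomological reformulation at the end of the lemma is just the assertion that this cohomology class, not merely $f^*(\phi_1)$, but a chosen cocycle lift, exists). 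So the entire content is the vanishing of $\delta(f^*(\phi_1))$.

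The main obstacle is exactly this vanishing, and here is how I would handle it. Rather than computing $\delta$ on the nose in $\h^2$, I would argue at the cocycle level. Recall from Lemma~\ref{lem-Um-is-semidirect} that $\U_{m+1}=N\rtimes\U_m$ with $N\cong V_m$ as $\U_m$-modules, and $\phi\colon\U_{m+1}\to V_m$ is the semidirect-product cocycle of Lemma~\ref{lem-semidirect-gives-cocycle}; pulling back along $\tilde G\to G\to\U_{m+1}$ (first projection then $f_1$), $\phi_1\circ f$ is the semidirect-product cocycle attached to the decomposition $\U_{m+1}^{(1)}=\f_p[\U_m]\rtimes\U_m$ composed with $\f_p[\U_m]\surj V_m$. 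But the semidirect product $\f_p[\U_m]\rtimes\U_m=\U_{m+1}^{(1)}$ itself furnishes, by Lemma~\ref{lem-semidirect-gives-cocycle} applied directly, a $1$-cocycle $\tilde\phi_1\colon\U_{m+1}^{(1)}\to\f_p[\U_m]$ — namely $g\mapsto n_g$ in the notation there — and this is genuinely a cocycle for the $\U_{m+1}^{(1)}$-action, hence, inflated along $\tilde G\to\U_{m+1}^{(1)}$ (the first projection), a $1$-cocycle on $\tilde G$ with values in $\f_p[\U_m]^{(1)}$. By construction its image under $1\mapsto v$ is $n_g$ pushed to $V_m$, which is exactly $\phi\circ(\text{projection})= \phi_1\circ f$ up to the identification $\U_{m+1}\cong V_m\rtimes\U_m$. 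So the lift is produced directly, with no obstruction computation needed: the point is simply that $f_1$ is a morphism of semidirect products (identity on $\U_m$, $1\mapsto v$ on the normal part), and such a morphism carries the canonical cocycle of the source to the canonical cocycle of the target.

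Thus the proof reduces to two routine verifications: (a) that $f_1\colon\f_p[\U_m]\rtimes\U_m\to V_m\rtimes\U_m=\U_{m+1}$ intertwines the two semidirect-product decompositions, so that Lemma~\ref{lem-semidirect-gives-cocycle}'s cocycles match under $f_1$ (immediate from $f_1$ being identity on the $\U_m$-factor and $\f_p[\U_m]$-linear on the normal factor), and (b) that inflation along the first projection $\tilde G=\U_{m+1}^{(1)}\times\U_{m+1}^{(2)}\to\U_{m+1}^{(1)}$ turns this into a $\tilde G$-cocycle valued in $\f_p[\U_m]^{(1)}$ with the stated module structure (clear, since $\tilde G$ acts on $\f_p[\U_m]^{(1)}$ through $(g_1,g_2)\mapsto\pi(f_1(g_1))$, which factors through the first projection). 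For $\tilde\phi_2$ one runs the mirror argument using the variant of Lemma~\ref{lem-semidirect-gives-cocycle} recorded just before it (the $\psi$-version, for decompositions $g=h_g n_g$), so that $\tilde\phi_2(g)=g\cdot\psi(g)$-type cocycle lifts $\phi_2\circ f$ under $1\mapsto v^*$. If one prefers the obstruction-theoretic phrasing, the upshot is that $\delta(f^*(\phi_i))=0$ because $f^*(\phi_i)$ is in the image of restriction from a quotient of $\tilde G$ on which the extension already splits compatibly — but the explicit construction above is cleaner and is what I would write up. No appeal to Lemma~\ref{lem-usual-lemma-rep-theory} is in fact needed for this particular lemma; that lemma will enter later, when analysing the cup-product $\tilde\phi_1\cupp\tilde\phi_2$ and the resulting extension $\tilde\U_n$.
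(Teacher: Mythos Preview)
Your proposal is correct and, once you abandon the obstruction-theoretic setup, lands on exactly the paper's proof: apply Lemma~\ref{lem-semidirect-gives-cocycle} to $\f_p[\U_m]\rtimes\U_m$ and inflate along the relevant projection $\tilde G\to\U_{m+1}^{(i)}$. The long exact sequence and $\delta$-vanishing discussion at the start is unnecessary scaffolding (as you yourself recognize by the end), and Lemma~\ref{lem-usual-lemma-rep-theory} plays no role here; the paper's proof is the one-line version of your middle paragraph.
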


\begin{proof}
Simply apply Lemma~\ref{lem-semidirect-gives-cocycle} to~$\f_p[\U_m] \rtimes \U_m$, and inflate the resulting~$1$-cocycle to~$\tilde G$ using either projection.
\end{proof}

Let us write simply~$\f_p[\U_m \times \U_m]$ for the tensor product~$\f_p[\U_m] ^{(1)} \otimes \f_p[\U_m] ^{(2)}$, as a~$\tilde G$-module. The Lemma implies that the cup-product~$\tilde \phi_1 \cupp \tilde \phi_2 \in \h^2(\tilde G, \f_p[\U_m \times \U_m])$ maps to~$f^*(\phi_1 \cupp \phi_2) \in \h^2(\tilde G, V_m \otimes V_m^*)$, under the natural map on the level of coefficients.

Now, let us consider the extension of~$\tilde G$ defined by~$f^*(\phi_1 \cupp \phi_2)$; this has the form
\[ 0 \longrightarrow S_n \longrightarrow \U_n^{(1, 2)} \longrightarrow \tilde G \longrightarrow 1 \, ,   \]
where~$\U_n^{(1,2)}$ is the pull-back of~$\U_n$ along~$f$.

Similarly, we use~$\tilde \phi_1 \cupp \tilde \phi_2$ to define the extension 
\[ 0 \longrightarrow \f_p[\U_m \times \U_m] \longrightarrow  \tilde \U_n \longrightarrow \tilde G \longrightarrow 1 \, ,   \]
which finally provides our definition of~$\tilde \U_n$. To summarize, there is a commutative diagram 
\[ \begin{CD}
  \f_p[\U_m \times \U_m] @>{1 \otimes 1 \mapsto v \otimes v^*}>> V_m \otimes V_m^* @>=>> V_m \otimes V_m^* \\
  @VVV                        @VVV                     @VVV    \\
  \tilde \U_n @>>> \U_n ^{(1,2)} @>>> \U_n \\
  @VVV  @VVV @VVV \\
  \tilde G @>=>> \tilde G @>f>> G \, , 
\end{CD}
  \]
and the square on the bottom right is a pullback diagram. All the arrows are surjective. 

The extension defining~$\tilde \U_n$ will be easy to study, for the following reason. Recall the subgroups~$N = \ker(\pi)$ and~$N'= \ker(\pi')$, in~$\U_{m+1}$; recall also that~$ N_1 \times N_2' \subset G_1 \times G_2 = \U_{m+1} \times \U_{m+1}$ is the kernel of the action of~$G$ on~$V_m \otimes V_m^*$. Perhaps more important to us is that~$G/ N_1 \times N_2' = \U_m \times \U_m$. From this it follows readily that:

\begin{lem}
Let~$\tilde N = f^{-1}( N_1 \times N_2') \subset \tilde G$. Then~$\tilde G / \tilde N = \U_m \times \U_m$, and as a result, the module~$\f_p[\U_m \times \U_m]$ is induced from the trivial module of~$\tilde N$. In symbols 
\[ \f_p[\U_m \times \U_m] = \Ind_{\tilde N}^{\tilde G}(\f_p) \, .  \qquad \square \]
\end{lem}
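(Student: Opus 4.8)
The plan is to first pin down the quotient $\tilde G/\tilde N$ completely explicitly, and then recognize the $\tilde G$-module $\f_p[\U_m \times \U_m]$ as an inflated regular representation, at which point the induced-module statement follows from the standard identification $\Ind_K^\Gamma(\f_p) \cong \f_p[\Gamma/K]$.

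First I would unwind the definition $\tilde N = f^{-1}(N_1 \times N_2')$. Since $f = (f_1, f_2)$ and $N_1 \times N_2' \subset G_1 \times G_2$ is a product subgroup, we have $\tilde N = f_1^{-1}(N_1) \times f_2^{-1}(N_2')$. Now $f_1 \colon \f_p[\U_m] \rtimes \U_m \to V_m \rtimes \U_m = \U_{m+1}$ maps the normal subgroup $\f_p[\U_m]$ onto $N_1 = V_m$ and restricts to the identity on $\U_m$ (by definition of $f_1$ and Lemma~\ref{lem-Um-is-semidirect}), so $f_1^{-1}(N_1) = \f_p[\U_m]$; likewise, using Lemma~\ref{lem-Um-is-semidirect-dual}, $f_2^{-1}(N_2') = \f_p[\U_m]$. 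Hence $\tilde N = \f_p[\U_m] \times \f_p[\U_m]$ is exactly the product of the two distinguished normal subgroups of the factors, and the quotient map $\tilde G \to \tilde G/\tilde N$ is the homomorphism $p = (p_1, p_2) \colon \tilde G \to \U_m \times \U_m$, where $p_i \colon \U_{m+1}^{(i)} = \f_p[\U_m] \rtimes \U_m \to \U_m$ is the semidirect-product projection. In particular $\tilde G/\tilde N \cong \U_m \times \U_m$. (Equivalently: $\tilde N$ is the $f$-preimage of $N_1 \times N_2'$, the map $f$ is surjective, and $G/(N_1 \times N_2') = \U_m \times \U_m$ as recalled just before the statement.)

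Next I would describe the module through this quotient. By construction $\f_p[\U_m]^{(1)}$ is the group algebra $\f_p[\U_m]$ with $\tilde G$ acting via $(g_1,g_2) \mapsto \pi(f_1(g_1)) = p_1(g_1,g_2)$, and the conjugation action of $\U_m$ on the normal subgroup $\f_p[\U_m]$ inside $\f_p[\U_m] \rtimes \U_m$ is precisely left multiplication; so $\f_p[\U_m]^{(1)}$ is the left regular representation of $\U_m$, inflated to $\tilde G$ along $p_1$, and symmetrically $\f_p[\U_m]^{(2)}$ is the left regular representation inflated along $p_2$. Therefore $\f_p[\U_m \times \U_m] = \f_p[\U_m]^{(1)} \otimes \f_p[\U_m]^{(2)}$ is the left regular representation of $\U_m \times \U_m$ inflated to $\tilde G$ along $p$; in particular its $\tilde G$-action factors through $\tilde G/\tilde N = \U_m \times \U_m$. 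Finally, invoking the elementary fact that for any group $\Gamma$ and normal subgroup $K$ one has $\Ind_K^\Gamma(\f_p) = \f_p[\Gamma] \otimes_{\f_p[K]} \f_p \cong \f_p[\Gamma/K]$ as $\Gamma$-modules — which, $K$ being normal, is the left regular representation of $\Gamma/K$ inflated to $\Gamma$ — and applying it with $\Gamma = \tilde G$, $K = \tilde N$, we conclude $\f_p[\U_m \times \U_m] = \Ind_{\tilde N}^{\tilde G}(\f_p)$.

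There is no serious obstacle here: everything reduces to bookkeeping. The only points that require a moment's care are the identification $\tilde N = \f_p[\U_m] \times \f_p[\U_m]$ (which rests on $f_i$ mapping the enlarged rightmost column/first line onto $N_i$, $N_i'$ respectively) and the verification that the two module structures $\f_p[\U_m]^{(i)}$ are inflations of the genuine left regular representation along $p_i$, rather than along some twisted composite — both of which are immediate from the definitions of $f_1$, $f_2$ and Lemmas~\ref{lem-Um-is-semidirect} and~\ref{lem-Um-is-semidirect-dual}.
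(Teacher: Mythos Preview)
Your proposal is correct and is essentially the argument the paper has in mind; the paper itself gives no proof beyond the phrase ``From this it follows readily'' and the $\square$, and you have simply written out the bookkeeping. Your identification $\tilde N = \f_p[\U_m]\times\f_p[\U_m]$, the recognition of $\f_p[\U_m]^{(i)}$ as the regular representation inflated along the projection $p_i$, and the final appeal to $\Ind_K^\Gamma(\f_p)\cong\f_p[\Gamma/K]$ are exactly the steps a reader would supply.
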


\begin{rmk}
By contrast, the $G$-module~$V_m \otimes V_m^*$ is not induced -- this is the chief reason for prefering to work with~$\tilde G$ and the corresponding extension. Indeed, the starting point of our work was to find a suitable extension of~$\U_n(\f_p)$, fitting in an exact sequence, whose kernel is an induced module. This allows the use of Shapiro's Lemma, as in the next Theorem. In~\cite{previous}, Shapiro's Lemma played a pivotal role.  
\end{rmk}

In the following statement, we use the decomposition~$\tilde N = \tilde N_1 \times \tilde N_2'$ where~$\tilde N_1 =f_1^{-1}(N_1) \cong \f_p[\U_m] \subset \U ^{(1)}_{m+1}$ and~$\tilde N_2' = f_2^{-1}(N_2') \cong \f_p[\U_m] \subset \U ^{(2)}_{m+1}$.

\begin{thm} \label{thm-maps-into-Un-tilde}
Let~$\Gamma $ be a profinite group, and let~$\gamma \colon \Gamma \to \tilde G$ be a continuous homomorphism. Let~$\Lambda = \gamma^{-1}(\tilde N)$, and let~$\lambda \colon \Lambda \to \tilde N$ be the restriction of~$\gamma $. Then the following statements are equivalent. \begin{enumerate}
\item $\gamma $ can be lifted to a continuous homomorphism $\Gamma \to \tilde \U_n$.
  \item The subspaces~$\lambda^*(\h^1(\tilde N_1, \f_p))$ and~$\lambda^*(\h^1(\tilde N_2', \f_p))$ are orthogonal for the cup-product. In more details, for any~$x \in \h^1(\tilde N_1, \f_p)$ and any~$y \in \h^1(\tilde N_2', \f_p)$, which we may see both as elements of~$\h^1(\tilde N, \f_p)$, we have 
\[ \lambda^*(x) \cupp \lambda^*(y) = 0 \in \h^2(\Lambda, \f_p) \, .   \]
\end{enumerate}


  
\end{thm}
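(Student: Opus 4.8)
The plan is to reduce the lifting problem for $\gamma$ to a pair of lifting problems that can be controlled via Shapiro's Lemma, then identify the obstruction class as the cup-product appearing in statement (2).

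\medskip

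\textbf{Step 1: Translate liftability into vanishing of a pulled-back obstruction class.} Recall that $\tilde\U_n$ is the extension of $\tilde G$ by $A := \f_p[\U_m \times \U_m]$ classified by $\tilde\phi_1 \cupp \tilde\phi_2 \in \h^2(\tilde G, A)$. By the standard theory of extensions, a continuous homomorphism $\gamma \colon \Gamma \to \tilde G$ lifts to $\Gamma \to \tilde\U_n$ if and only if $\gamma^*(\tilde\phi_1 \cupp \tilde\phi_2) = 0$ in $\h^2(\Gamma, \gamma^*A)$, where $\gamma^*A$ is $A$ regarded as a $\Gamma$-module via $\gamma$. Since cup-product commutes with pullback, $\gamma^*(\tilde\phi_1 \cupp \tilde\phi_2) = \gamma^*(\tilde\phi_1) \cupp \gamma^*(\tilde\phi_2)$, where $\gamma^*(\tilde\phi_1) \in \h^1(\Gamma, \gamma^*(\f_p[\U_m]^{(1)}))$ and similarly for $\tilde\phi_2$. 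So the task is to show this cup-product in $\h^2(\Gamma, \gamma^* A)$ vanishes if and only if condition (2) holds.

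\medskip

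\textbf{Step 2: Apply Shapiro's Lemma.} The key structural input is the lemma just proved: $A = \f_p[\U_m \times \U_m] = \Ind_{\tilde N}^{\tilde G}(\f_p)$, and moreover $A$ factors as a tensor product $\f_p[\U_m]^{(1)} \otimes \f_p[\U_m]^{(2)}$ with $\f_p[\U_m]^{(1)} = \Ind_{\tilde N_1}^{?}(\f_p)$-type modules; more precisely $\f_p[\U_m]^{(1)}$ is induced from $\tilde N_1$ in the appropriate sense and $\f_p[\U_m]^{(2)}$ from $\tilde N_2'$. Pulling back along $\gamma$, and writing $\Lambda = \gamma^{-1}(\tilde N)$, the module $\gamma^* A$ is induced from $\Lambda$: $\gamma^* A \cong \Ind_\Lambda^\Gamma(\f_p)$. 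By Shapiro's Lemma, $\h^*(\Gamma, \gamma^* A) \cong \h^*(\Lambda, \f_p)$, and this isomorphism is natural enough to be compatible with cup-products coming from the tensor-product decomposition: under the two projections $\tilde N \to \tilde N_1$, $\tilde N \to \tilde N_2'$, the classes $\gamma^*(\tilde\phi_1)$ and $\gamma^*(\tilde\phi_2)$ correspond, via Shapiro, to $\lambda^*(x_0)$ and $\lambda^*(y_0)$ for specific universal classes $x_0 \in \h^1(\tilde N_1, \f_p)$, $y_0 \in \h^1(\tilde N_2', \f_p)$ — namely the classes obtained by restricting $\tilde\phi_1$, $\tilde\phi_2$ to $\tilde N_1$, $\tilde N_2'$ respectively, which by the construction via Lemma~\ref{lem-semidirect-gives-cocycle} restrict to the \emph{identity}-type cocycles. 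One then checks that $x_0$ and $y_0$ generate $\h^1(\tilde N_1, \f_p)$ and $\h^1(\tilde N_2', \f_p)$ over $\f_p[\U_m]$ (these are free modules and $x_0$, $y_0$ are the canonical generators), so that the condition "$\lambda^*(x_0) \cupp \lambda^*(y_0) = 0$" is equivalent to the full orthogonality statement "$\lambda^*(x) \cupp \lambda^*(y) = 0$ for all $x, y$" by $\Gamma$-equivariance of the cup-product pairing.

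\medskip

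\textbf{Step 3: Match the obstruction with the cup-product.} Combining Steps 1 and 2: $\gamma$ lifts iff $\gamma^*(\tilde\phi_1) \cupp \gamma^*(\tilde\phi_2) = 0$ in $\h^2(\Gamma, \gamma^*A) \cong \h^2(\Lambda, \f_p \otimes \f_p) = \h^2(\Lambda, \f_p)$, and under Shapiro this transported cup-product is exactly $\lambda^*(x_0) \cupp \lambda^*(y_0)$, whose vanishing is equivalent to condition (2). The main obstacle, and the step requiring the most care, is Step 2: one must verify that Shapiro's isomorphism intertwines the "external" cup-product $\h^1(\Gamma, \Ind \f_p) \times \h^1(\Gamma, \Ind \f_p) \to \h^2(\Gamma, \Ind(\f_p \otimes \f_p))$ with the cup-product on $\h^*(\Lambda, \f_p)$ in the precise way described — this is a compatibility between Shapiro's Lemma, tensor products of induced modules (the relevant "Ind commutes with $\otimes$" or Mackey-type formula), and the cup-product, and it is exactly the mechanism by which, as the remark notes, the argument of \cite{previous} generalizes. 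The identification of the restricted cocycles $\tilde\phi_i|_{\tilde N_i}$ with the canonical generators of $\h^1$ of the relevant free $\f_p[\U_m]$-modules, and the passage from one pair of classes to the full orthogonality statement via equivariance, should be routine once the framework is set up.
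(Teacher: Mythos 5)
Your Step 1 matches the paper exactly, but Step 2 contains several genuine gaps, and it is precisely there that the paper's argument is both cleaner and correct where yours is not.

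First, the identification $\gamma^*A \cong \Ind_\Lambda^\Gamma(\f_p)$ is wrong in general. The module $A = \f_p[\U_m\times\U_m]$ is a free rank-one module over $\tilde G/\tilde N \cong \U_m\times\U_m$, but $\Gamma/\Lambda$ only maps to a (possibly proper) subgroup $H\subset \U_m\times\U_m$, and as a $\Gamma/\Lambda$-module $A$ is then free of rank $[\U_m\times\U_m : H]$. So $\gamma^*A \cong \Ind_\Lambda^\Gamma(\f_p^d)$ with $d \geq 1$, and consequently $\h^2(\Gamma,\gamma^*A)$ is a direct sum of $d$ copies of $\h^2(\Lambda,\f_p)$, not a single copy. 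This is exactly the reason the paper does \emph{not} invoke Shapiro as a full isomorphism onto $\h^*(\Lambda,\f_p)$, but only to conclude that the restriction map $\h^2(\Gamma, \gamma^*A) \to \h^2(\Lambda, \gamma^*A)$ is injective. That weaker statement is all that is needed, and it holds regardless of whether $\gamma$ is surjective onto $\tilde G/\tilde N$.

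Second, your reduction of condition (2) to the vanishing of a single class $\lambda^*(x_0)\cupp\lambda^*(y_0)$, recovered afterwards by ``$\Gamma$-equivariance,'' does not go through. The cup-product is bilinear, not linear; knowing $\lambda^*(g\cdot x_0)\cupp\lambda^*(g\cdot y_0)=0$ for all $g$ says nothing about the cross-terms $\lambda^*(g\cdot x_0)\cupp\lambda^*(h\cdot y_0)$ with $g\ne h$, and it is exactly those cross-terms that appear in condition (2) when one expands arbitrary $x,y$. Moreover, when $\Gamma/\Lambda$ maps to a proper subgroup $H$, the translates of $x_0$ do not even span $\h^1(\tilde N_1,\f_p)$.

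The paper avoids both pitfalls with a move you almost see but do not exploit: after restricting to $\tilde N$, \emph{all} the coefficient modules become trivial, and the restricted cocycle is literally $p_1\cupp p_2$ where $p_1,p_2 \colon E\times E\to E$ are the coordinate projections (here $E\cong \f_p^{|\U_m|}$ is identified simultaneously with $\tilde N_1$, $\tilde N_2'$, and the group algebras). Writing this in a basis $\varepsilon_1,\dots,\varepsilon_k$ of $E^*$ gives $\omega|_{\tilde N}=\sum_{i,j}\varepsilon_i\cupp\varepsilon_j$, and since $\h^2(\Lambda, E\otimes E)=\bigoplus_{i,j}\h^2(\Lambda,\f_p)$, the class $\lambda^*(\omega|_{\tilde N})$ vanishes if and only if \emph{every} $\lambda^*(\varepsilon_i)\cupp\lambda^*(\varepsilon_j)$ vanishes — which is exactly condition (2), no equivariance argument needed. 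You should abandon the ``one universal class'' reduction and the claimed isomorphism of $\h^*(\Gamma,\gamma^*A)$ with $\h^*(\Lambda,\f_p)$, and instead: (a) use Shapiro only for the injectivity of restriction to $\Lambda$; (b) restrict the cocycle $\tilde\phi_1\cupp\tilde\phi_2$ itself to $\tilde N$ and observe that the coefficient modules trivialize; (c) expand $\omega|_{\tilde N}$ in a basis and read off all of condition (2) at once.
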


\begin{proof}
  Let use write~$\omega = \tilde \phi_1 \cupp \tilde \phi_2$, the cohomology class defining~$\tilde \U_n$. Then it is clear that (1) is equivalent to~$\gamma^*(\omega ) = 0 \in \h^2(\Gamma , \f_p[\U_m \times \U_m])$.

By the last Lemma, the module~$\f_p[\U_m \times \U_m]$ is free as a~$\tilde G / \tilde N$-module. Therefore, it is also free as a module over the subgroup~$\Gamma / \Lambda \subset \tilde G / \tilde N$. As a result, it is of the form~$\Ind_\Lambda^\Gamma (\f_p^d)$ for some~$d \ge 1$. By Shapiro's Lemma, the restriction map 
\[ \h^2(\Gamma , \f_p[\U_m \times \U_m]) \longrightarrow \h^2(\Lambda, \f_p[\U_m \times \U_m])  \]
is injective. Condition (1) is thus equivalent to~$\lambda^*( \omega |_{\tilde N}) = 0$, where $ \omega |_{\tilde N}$ is the restriction of~$\omega $ to~$\tilde N$.

Now, after we restrict to~$\tilde N$, things become surprisingly simple. All the modules considered are trivial. In fact, let~$E= \f_p^k$ where~$k = |\U_m|$. Then we can identify~$E$ with each of~$\tilde N_1$, $\tilde N_2'$, and $\f_p[\U_m] ^{(i)}$ for~$i= 1, 2$ (that is, in each case we have a canonical basis). The cocycle~$\tilde \phi_i$, once restricted, is now the identity of~$E$, for~$i= 1, 2$ (this is part of Lemma~\ref{lem-semidirect-gives-cocycle}). The class $ \omega |_{\tilde N}$ is merely $p_1 \cupp p_2 \in \h^2(E \times E, E \otimes E)$, where~$p_1$ and~$p_2$ are the first and second projection, respectively, from~$E \times E$ to~$E$. If~$\varepsilon_1, \ldots, \varepsilon_k$ is any basis of~$E^* = \h^1(E, \f_p)$, then we can also write (with a mild abuse of notation)
\[  \omega |_{\tilde N} = \sum_{i,j} \varepsilon_i \cupp \varepsilon_j \in \bigoplus_{i,j} \h^2(E \times E, \f_p) \, .   \]
The cohomology group~$\h^2(\Lambda , E \otimes E)$ likewise splits as a direct sum of copies of~$\h^2(\Lambda, \f_p)$, and it is now clear that~$\lambda^*( \omega |_{\tilde N})=0$ is equivalent to (2).
\end{proof}

%


To conclude the group-theoretic considerations, here is a more concrete description of the group~$\tilde \U_5(\f_p)$.

\begin{prop} \label{prop-explicit-U5}
The subgroup of~$\U_{2p+1}(\f_p)$ generated by the four elements 
\[ \sigma_1' := \left(\begin{array}{rr}
  J & 0 \\
  0 & 1
\end{array}\right) \, , \qquad \sigma_2':=\sigma_p \, , \qquad \sigma_3':=\sigma_{p+1} \, , \qquad \sigma_4' := \left(\begin{array}{rr}
  1 & 0 \\
  0 & J
\end{array}\right) \, , 
\]
where~$J$ is a Jordan block of size~$p$, is isomorphic to~$\tilde \U_5(\f_p)$.
\end{prop}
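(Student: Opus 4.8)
The plan is to produce an explicit embedding $\tilde{\U}_5(\f_p) \hookrightarrow \U_{2p+1}(\f_p)$ matching the four generators $\sigma_i'$ on the nose, and then identify the image. First I would make the abstract structure of $\tilde{\U}_5(\f_p)$ completely explicit in the case $m = 2$: by definition $\tilde{\U}_5 = \U_3^{(1)} \times \U_3^{(2)}$ is an extension of $\tilde G = \U_3^{(1)} \times \U_3^{(2)}$ by $\f_p[\U_2 \times \U_2] = \f_p[\U_2]^{(1)} \otimes \f_p[\U_2]^{(2)}$, where $\U_3^{(i)} = \f_p[\U_2] \rtimes \U_2$ and $\U_2 = \f_p$. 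Since $|\U_2| = p$, the group algebra $\f_p[\U_2]$ is $p$-dimensional over $\f_p$, so $|\U_3^{(i)}| = p^{p+1}$ and $|\tilde{\U}_5| = p^{p^2} \cdot p^{p+1} \cdot p^{p+1} = p^{p^2+2p+2}$, agreeing with the order claimed in the Introduction. I would record once and for all a basis-free description of $\tilde{\U}_5$ as the set of triples $(\xi, n_1, n_2)$ with $n_i \in \U_3^{(i)}$ and $\xi \in \f_p[\U_2]^{(1)} \otimes \f_p[\U_2]^{(2)}$, with multiplication twisted by the explicit $2$-cocycle $\tilde\phi_1 \cupp \tilde\phi_2$ computed from Lemma~\ref{lem-semidirect-gives-cocycle} (the cocycle is, after trivialising on $\tilde N$, literally $p_1 \cupp p_2$ as in the proof of Theorem~\ref{thm-maps-into-Un-tilde}).

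Next I would define the candidate homomorphism $\Theta$ from $\tilde{\U}_5(\f_p)$ into $\U_{2p+1}(\f_p)$. The natural picture is: view $\f_p^{2p+1}$ as $P \oplus \langle e_{p+1}\rangle \oplus Q$ where $P = \langle e_1,\dots,e_p\rangle$ and $Q = \langle e_{p+2},\dots,e_{2p+1}\rangle$ are each of dimension $p$, each carrying the regular $\U_2 = C_p$-action via the single Jordan block $J$. On $P$, the subgroup $\U_3^{(1)}$ acts through its "enlarged last column": the Jordan block $\sigma_1'$ generates the $C_p$ acting on $P \cong \f_p[\U_2]$ by left regular action, and $\sigma_2' = \sigma_p$ is the generator of $\tilde N_1 \cong \f_p[\U_2]$, embedded as the group of matrices with support in the last column of the top-left $(p+1)\times(p+1)$ block. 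Dually $Q$ carries $\U_3^{(2)}$ with $\sigma_4' = \begin{pmatrix} 1 & 0 \\ 0 & J\end{pmatrix}$ and $\sigma_3' = \sigma_{p+1} \in \tilde N_2'$ supported in the first row of the bottom-right block. The $S_{2p+1}$-part, i.e.\ the top-right $p\times p$ corner of $\U_{2p+1}$, is exactly $\Hom(Q',P') \cong \f_p^{p^2}$ where I write $P',Q'$ for the images after deleting $e_{p+1}$; this is the copy of $\f_p[\U_2]\otimes\f_p[\U_2]$, and the commutator relations inside $\U_{2p+1}(\f_p)$ between an element of $\tilde N_1$ and an element of $\tilde N_2'$ produce precisely the cup-product cocycle $\tilde\phi_1 \cupp \tilde\phi_2$ — this is the content of the computation in Section~\ref{sec-the-cup-product} (Theorem~\ref{thm-coho-class-is-cup-product}), now applied to the wreath extension rather than $\U_5$ itself. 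So I would check that $\Theta$ sends the abstract generators of $\tilde{\U}_5$ to $\sigma_1',\sigma_2',\sigma_3',\sigma_4'$ and is a homomorphism by comparing the twisted multiplication on $\tilde{\U}_5$ with matrix multiplication in $\U_{2p+1}(\f_p)$, using the block identity $(*)$ from Section~\ref{sec-the-cup-product} as the computational engine.

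Finally I would prove $\Theta$ is injective and that its image is generated by the four $\sigma_i'$. Injectivity: the kernel is a normal subgroup, and one checks it meets the central $S_{2p+1}$-part trivially (a matrix supported only in the top-right $p\times p$ corner is nontrivial in $\U_{2p+1}$ iff nonzero there) and maps injectively onto $\tilde G$ via the induced maps $P \to \f_p[\U_2]$ and $Q \to \f_p[\U_2]$, so $\Theta$ is injective by comparing the two extensions; equivalently a direct order count ($|\langle\sigma_1',\dots,\sigma_4'\rangle| = p^{p^2+2p+2}$) suffices if one shows $\Theta$ is surjective onto the subgroup. For surjectivity onto $\langle \sigma_i'\rangle$, I would argue that the images $\Theta(\sigma_i')$ generate a group containing all of $\tilde N_1$, all of $\tilde N_2'$ (each is generated as a $\U_2$-module by one element, namely $\sigma_p$ resp.\ $\sigma_{p+1}$, under conjugation by $\sigma_1'$ resp.\ $\sigma_4'$, since $\f_p[\U_2]$ is cyclic), hence all of $\tilde N = \tilde N_1 \times \tilde N_2'$, hence all of $S_{2p+1}$ (coming from commutators $[\tilde N_1, \tilde N_2']$, using that $\f_p[\U_2]\otimes\f_p[\U_2]$ is generated by $1\otimes 1$ as a $\U_2\times\U_2$-module and that the pairing realising the cup-product is perfect on the generating line); the quotient is then generated by the images of $\sigma_1'$ and $\sigma_4'$, i.e.\ all of $\U_2 \times \U_2$, so the whole of $\tilde{\U}_5$ is hit. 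The main obstacle I anticipate is purely bookkeeping: matching the \emph{sign and ordering} conventions in the cocycle $c(g,g') = \phi_1(g)\otimes g\cdot\phi_2(g')$ (note the asymmetry between $\phi$ and $\psi$ flagged in Section~\ref{sec-some-1-cocycles}) with the actual entries produced by matrix multiplication in $\U_{2p+1}$, i.e.\ verifying that the "$CR'\pi'(g_2g_2')^{-1}$" term in the Section~\ref{sec-the-cup-product} computation is reproduced verbatim when $C$ and $R$ are allowed to range over $\f_p[\U_2]$ rather than $V_2, V_2^*$ — everything else is a routine translation of the preceding four sections to the concrete matrix model, but this identification is where an error would hide.
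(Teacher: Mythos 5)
Your proposal is correct and follows essentially the same route as the paper: identify $\f_p[\U_2]$ with $V_p$ via the Jordan-block basis so that $\tilde G$ embeds into $\U_{p+1}\times\U_{p+1}$ with image generated by the images of the $\sigma_i'$, observe via Theorem~\ref{thm-coho-class-is-cup-product} (applied to $n=2p+1$) that the pullback along this embedding of the class of the extension $\U_{2p+1}\to\U_{p+1}\times\U_{p+1}$ is exactly $\tilde\phi_1\cupp\tilde\phi_2$, and finish by checking that the four generators produce all of $S_{2p+1}$ from the single commutator $[\sigma_2',\sigma_3']$, which corresponds to $1\otimes 1$. The bookkeeping worry you flag at the end is sidestepped in the paper: once the cohomological identification $\iota^*(\alpha)=\tilde\phi_1\cupp\tilde\phi_2$ is in hand, $\tilde\U_5$ is by its very definition the fibre product $\U_{2p+1}\times_{\U_{p+1}\times\U_{p+1}}\tilde G$, so the injective lift $\kappa\colon\tilde\U_5\to\U_{2p+1}$ exists automatically and there is no need to compare the twisted multiplication against matrix multiplication entry by entry.
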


For~$p=3$ for example, the generators are 
\[ \sigma_1' = \left(\begin{array}{rrrrrrr}
  1 & 1 & 0 & 0 & 0 & 0 & 0 \\
  0 & 1 & 1 & 0 & 0 & 0 & 0 \\
  0 & 0 & 1 & 0 & 0 & 0 & 0 \\
  0 & 0 & 0 & 1 & 0 & 0 & 0 \\
  0 & 0 & 0 & 0 & 1 & 0 & 0 \\
  0 & 0 & 0 & 0 & 0 & 1 & 0 \\
  0 & 0 & 0 & 0 & 0 & 0 & 1 
\end{array}\right) \, , \qquad \sigma_2' = \left(\begin{array}{rrrrrrr}
  1 & 0 & 0 & 0 & 0 & 0 & 0 \\
  0 & 1 & 0 & 0 & 0 & 0 & 0 \\
  0 & 0 & 1 & 1 & 0 & 0 & 0 \\
  0 & 0 & 0 & 1 & 0 & 0 & 0 \\
  0 & 0 & 0 & 0 & 1 & 0 & 0 \\
  0 & 0 & 0 & 0 & 0 & 1 & 0 \\
  0 & 0 & 0 & 0 & 0 & 0 & 1 
\end{array}\right) \, ,  \]
\[ \sigma_3' = \left(\begin{array}{rrrrrrr}
  1 & 0 & 0 & 0 & 0 & 0 & 0 \\
  0 & 1 & 0 & 0 & 0 & 0 & 0 \\
  0 & 0 & 1 & 0 & 0 & 0 & 0 \\
  0 & 0 & 0 & 1 & 1 & 0 & 0 \\
  0 & 0 & 0 & 0 & 1 & 0 & 0 \\
  0 & 0 & 0 & 0 & 0 & 1 & 0 \\
  0 & 0 & 0 & 0 & 0 & 0 & 1 
\end{array}\right) \, , \qquad \sigma_4' = \left(\begin{array}{rrrrrrr}
  1 & 0 & 0 & 0 & 0 & 0 & 0 \\
  0 & 1 & 0 & 0 & 0 & 0 & 0 \\
  0 & 0 & 1 & 0 & 0 & 0 & 0 \\
  0 & 0 & 0 & 1 & 0 & 0 & 0 \\
  0 & 0 & 0 & 0 & 1 & 1 & 0 \\
  0 & 0 & 0 & 0 & 0 & 1 & 1 \\
  0 & 0 & 0 & 0 & 0 & 0 & 1 
\end{array}\right) \, . 
\]
This allows you to enter the matrices in a computer algebra system, such as GAP, and immediately obtain information about~$\tilde \U_{5}(\f_p)$ for a concrete value of~$p$. For a random example, with~$p=3$, GAP tells us that the successive subgroups in the lower central series have order~$3^{17}$, $3^{13}$, $3^{10}$, $3^6$, $3^3$, $3$ and~$1$ respectively.

On the other hand, for~$p=2$, this provides the most convincing proof that~$\tilde \U_5(\f_2) = \U_5(\f_2)$. 

\begin{proof}
  Let~$\sigma= \sigma_1 $ be the canonical generator for the group~$\U_2$, which is cyclic of order~$p$. The action of~$\sigma $ on the group algebra~$\f_p[\U_2]$ is given by a Jordan block such as~$J$, if one works in the basis~$u^{p-1}, u^{p-2}, \ldots, u, 1$, where~$u = \sigma - 1$. This provides a map $\U_2 \to \U_p$ mapping~$\sigma $ to~$J$. We use the specified basis to identity~$\f_p[\U_2]$ with~$\f_p^p = V_p$, the natural module for~$\U_p$, and we extend our map to a homomorphism $\U_3 ^{(1)} = V_p \rtimes \U_2 \to V_p \rtimes \U_p = \U_{p+1}$, which is the identity on~$V_p$. It is visibly injective, and its image is generated by the elements obtained from~$\sigma_1'$ and~$\sigma_2'$ as given above, when we keep only their~$p+1$ first rows and columns.

  The action of~$\sigma $ on the dual~$V_p^*$ can likewise be put in Jordan canonical form, and a similar discussion provides us with a monomorphism~$\U_3 ^{(2)} = V_p^* \rtimes \U_2 \to V_p^* \rtimes \U_p = \U_{p+1}$, which is the identity on~$V_p^*$. Generators for its image can be obtained by truncating~$\sigma_3'$ and~$\sigma_4'$.

  Combining the two, we end up with an injective homomorphism 
\[\iota \colon  \tilde G = \U_3 ^{(1)} \times \U_3 ^{(2)} \longrightarrow \U_{p+1} \times \U_{p+1} \, .  \]
Of course, we can see~$\U_{p+1} \times \U_{p+1}$ as~$\U_{2p+1} / S_{2p+1}$. When we do that, the image of~$\iota $ is generated by the images of~$\sigma_i'$ for~$i= 1, 2, 3, 4$, under the natural projection.

Let~$\alpha \in \h^2(\U_{p+1} \times \U_{p+1}, S_{2p+1})$ be the class of the extension 
\[ 0 \longrightarrow S_{2p+1} \longrightarrow \U_{2p+1} \longrightarrow \U_{p+1} \times \U_{p+1} \longrightarrow 1 \, .   \]
Our explicit description of~$\alpha $ makes it clear that~$\iota^*(\alpha )$ is the class of 
\[ 0 \longrightarrow S_{2p+1} \longrightarrow \tilde \U_5 \longrightarrow \tilde G \longrightarrow 1 \, .   \]
(Note that~$S_{2p+1} = V_p \otimes V_p^*$.) In particular, this describes $\tilde \U_5$ as a fibre product
\[ \tilde \U_5 = \U_{2p+1} \times_{\U_{p+1} \times \U_{p+1}} \tilde G \, .   \]
More precisely however, we see that~$\iota $ lifts to an embedding~$\kappa \colon \tilde \U_5 \to \U_{2p+1}$, whose image is the full pre-image of $\operatorname{Im}(\iota )$.

To obtain the statement of the Proposition, it is therefore enough to show that the four elements~$\sigma_i'$, for~$i= 1, 2, 3, 4$, generate a group containing~$S_{2p+1}$. For this, we simply compute the commutator~$[\sigma_2', \sigma_3']$, which is the identity matrix with an extra~$1$ in position~$(p, p+2)$; for~$p=3$ this is 
\[ [\sigma_2', \sigma_3'] = \left(\begin{array}{rrrrrrr}
  1 & 0 & 0 & 0 & 0 & 0 & 0 \\
  0 & 1 & 0 & 0 & 0 & 0 & 0 \\
  0 & 0 & 1 & 0 & 1 & 0 & 0 \\
  0 & 0 & 0 & 1 & 0 & 0 & 0 \\
  0 & 0 & 0 & 0 & 1 & 0 & 0 \\
  0 & 0 & 0 & 0 & 0 & 1 & 0 \\
  0 & 0 & 0 & 0 & 0 & 0 & 1 
\end{array}\right) \, .   \]
Under the isomorphism~$S_{2p+1} \cong V_p \otimes V_p^* \cong \f_p[\U_2] \otimes \f_p[\U_2]$, this is the element $1 \otimes 1$. The result follows.
\end{proof}

\begin{rmk}
  In the sequel, the maps~$s_i$ defined on~$\U_n$ will be seen as maps on~$\U_{m+1} ^{(i)}$ or on~$\tilde \U_n$, when necessary. What is meant is to use the quotient~$\U_{m+1} ^{(i)} \to \U_{m+1}$, or~$\tilde \U_n \to \U_n$, and then compose with the maps written~$s_i$ in \S\ref{sec-prelim}. For example, on $\U_3 ^{(1)} = \f_p[\U_2] \rtimes \U_2$, the kernel of~$s_1$ is~$\f_p[\U_2]$.

  Also, the group~$\tilde \U_5$ having four distinguished elements~$s_1, s_2, s_3, s_4 \in \h^1(\tilde \U_5, \f_p)$, we can now speak of the vanishing of cohomology classes ``in the sense of~$\tilde \U_5$'', as promised in the Introduction, and explained in Remark~\ref{rmk-G-massey-products}. Explicitly, let~$\Gamma $ be a profinite group with classes~$\chi_1, \chi_2, \chi_3, \chi_4 \in \h^1(\Gamma , \f_p)$. We say that the Massey product of these classes vanishes in the sense of~$\tilde \U_5$ when there exists a continuous~$\phi \colon \Gamma \to \tilde \U_5$ such that~$s_i \circ \phi = \chi_i$ for~$i= 1, 2, 3, 4$. We also say that~$\phi $ is compatible with the $\chi_i$'s. 
\end{rmk}

We state a corollary with this vocabulary:

\begin{coro} \label{coro-repeated-massey}
Suppose~$\Gamma $ is a profinite group, and that the Massey product of the classes~$\chi_i \in \h^1(\Gamma , \f_p)$, for~$1 \le i \le 4$, vanishes in the sense of~$\tilde \U_5$. Then the Massey product 
\[ \langle \chi_1, \chi_1, \ldots, \chi_1, \chi_2, \chi_3, \chi_4, \chi_4, \ldots, \chi_4  \rangle  \]
also vanishes, where~$\chi_1$ and~$\chi_4$ are repeated less than~$p$ times.
\end{coro}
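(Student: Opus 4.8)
The plan is to exhibit, from a map $\phi\colon\Gamma\to\tilde\U_5(\f_p)$ compatible with $\chi_1,\chi_2,\chi_3,\chi_4$, a map $\psi\colon\Gamma\to\U_{k+1}(\f_p)$ compatible with the repeated sequence $\chi_1,\ldots,\chi_1,\chi_2,\chi_3,\chi_4,\ldots,\chi_4$, where $\chi_1$ appears $a$ times and $\chi_4$ appears $b$ times with $1\le a,b<p$ and $k=a+b+2$. By the definition of vanishing recalled in \S\ref{sec-prelim}, such a $\psi$ is exactly what is needed. The natural candidate for $\psi$ is $\kappa$ from Proposition~\ref{prop-explicit-U5} composed with $\phi$, after checking the labels of the ``near-diagonal'' coordinates match up; that is, one uses the embedding $\kappa\colon\tilde\U_5\hookrightarrow\U_{2p+1}(\f_p)$ and restricts attention to an appropriate $(k+1)\times(k+1)$ block.

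First I would recall how $\kappa$ is built: the group $\tilde G=\U_3^{(1)}\times\U_3^{(2)}$ maps into $\U_{p+1}\times\U_{p+1}$ via $\iota$, where the first factor $\U_3^{(1)}=\f_p[\U_2]\rtimes\U_2$ is sent to $V_p\rtimes\U_p=\U_{p+1}$ by the regular representation of $\U_2=C_p$ on $\f_p[\U_2]$, realized by the Jordan block $J=\sigma_1'$ on its first $p$ rows and columns; similarly for the second factor. Pulling back the extension $0\to S_{2p+1}\to\U_{2p+1}\to\U_{p+1}\times\U_{p+1}\to1$ along $\iota$ gives $\tilde\U_5$, hence the embedding $\kappa$ into $\U_{2p+1}$. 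The key point is to understand what $s_i\circ\kappa$ is for $1\le i\le 2p$: on the first block, the relevant near-diagonal entries of $J^a$ (a power of a single Jordan block of size $p$) are explicit binomial coefficients, which are nonzero mod $p$ precisely because $a<p$; the same on the fourth block. So, up to truncating to the correct sub-block and reindexing, $\kappa\circ\phi$ lands in $\U_{k+1}(\f_p)$ and pulls back the standard classes $s_j$ to the desired repeated list $\chi_1,\ldots,\chi_1,\chi_2,\chi_3,\chi_4,\ldots,\chi_4$, up to rescaling each $\chi_1$ and $\chi_4$ by a nonzero scalar (harmless: post-compose with the diagonal-conjugation or a change of basis automorphism of $\U_{k+1}$ that rescales the near-diagonal, or observe that the Massey product vanishing is unaffected by rescaling the $\chi_i$).

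The main obstacle, and the step I expect to require real care, is the bookkeeping that the truncation $\U_{2p+1}\to\U_{k+1}$ (deleting the appropriate rows and columns) actually sends $\kappa(\tilde\U_5)$ into $\U_{k+1}$ and is a homomorphism on that image -- in other words, that the ``square'' part $S_{2p+1}=V_p\otimes V_p^*$ maps correctly under truncation to the corresponding square part in $\U_{k+1}$, so that no needed commutator relation is lost. Here the structure theory of \S\ref{sec-modS}--\S\ref{sec-the-cup-product} does the work: the cup-product description of the extension class, together with the fact that truncation corresponds to restricting $V_p\to V_a$ and $V_p^*\to V_b^*$ compatibly, shows the truncated map is the one defining the appropriate wreath-type extension sitting inside $\U_{k+1}$. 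Once this compatibility is in place, the existence of the compatible $\psi$ is immediate, and hence $\langle\chi_1,\ldots,\chi_1,\chi_2,\chi_3,\chi_4,\ldots,\chi_4\rangle$ vanishes; the constraint ``repeated fewer than $p$ times'' is exactly what keeps the relevant binomial coefficients invertible mod $p$, which is why it appears in the statement.
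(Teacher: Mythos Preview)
Your approach is the paper's: compose $\phi$ with the embedding $\kappa\colon\tilde\U_5\hookrightarrow\U_{2p+1}$ of Proposition~\ref{prop-explicit-U5}, then truncate. But the execution is much simpler than you suggest. From the explicit generators $\sigma_1',\ldots,\sigma_4'$ one reads off directly that $s_i^{\U_{2p+1}}\circ\kappa$ equals $s_1^{\tilde\U_5}$ for $1\le i\le p-1$, then $s_2^{\tilde\U_5}$, $s_3^{\tilde\U_5}$, and $s_4^{\tilde\U_5}$ for $p+2\le i\le 2p$; no rescaling arises, since the near-diagonal of $J^n$ is just $n$ (the coefficient is $\binom{n}{1}$, not a higher binomial coefficient). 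Hence $\kappa\circ\phi$ is already compatible with the sequence having $\chi_1$ and $\chi_4$ each repeated exactly $p-1$ times. To get fewer repetitions the paper simply applies $\pi$ and $\pi'$ (delete the last, resp.\ first, row and column), which are group homomorphisms on \emph{all} of $\U_n$; your concern about truncation being a homomorphism only on the image, and the appeal to \S\S\ref{sec-modS}--\ref{sec-the-cup-product}, are unnecessary. Finally, the bound ``fewer than $p$ times'' is there because $\kappa$ lands in $\U_{2p+1}$, giving at most $p-1$ copies to start with; it has nothing to do with invertibility of binomial coefficients.
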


\begin{proof}
  Compose~$\phi \colon \Gamma \to \tilde \U_5$ with the embedding $\tilde \U_5 \to \U_{2p+1}$ constructed in the proof of the last Proposition. This gives the vanishing of
\[ \langle \chi_1, \chi_1, \ldots, \chi_1, \chi_2, \chi_3, \chi_4, \chi_4, \ldots, \chi_4  \rangle \, ,  \]
where~$\chi_1$ and~$\chi_4$ are repeated exactly~$p-1$ times. Then use the maps~$\pi \colon \U_n \to \U_{n-1}$ or~$\pi' \colon \U_n \to \U_{n-1}$ as often as you want.
\end{proof}

\section{Translation into Galois theory} \label{sec-translation}

We proceed to study some Galois-theoretic situations involving the groups~$\U_n$, $\U_{m+1} ^{(i)}$, or $\tilde \U_n$. All fields  will be assumed to contain a primitive~$p$-th root of unity~$\dz$, where~$p$ is our usual fixed prime. 

We start with a general situation where we can produce extensions with Galois group isomorphic to~$\f_p[\U_n] \rtimes \U_n$. The Proposition below will be mostly used for~$n=2$, but the general case is interesting: it is a rare example (perhaps the only one at the time of writing) of a situation when one can show the vanishing of an~$n$-fold Massey product, given the vanishing of an~$(n-1)$-product. Also, the formula showing up in (3) below does not seem to have appeared elsewhere.

We use freely the language introduced in~\S\ref{sec-prelim}.

\begin{prop} \label{prop-from-Un-to-Un+1}
  Let~$K/F$ be Galois with~$\Gal(K/F) = \U_n(\f_p)$, and assume that $K/F$ is compatible with~$a_1, \ldots, a_{n-1} \in F^\times$. In particular, $\langle a_1, \ldots, a_{n-1} \rangle$ vanishes. Assume that~$\dz \in F$.

  Suppose that~$a_n \in F^\times$ is a norm from~$K$, that is~$a_n = N_{K/F}(w)$ for some~$w \in K$. Finally, assume that~$a_1, \ldots, a_n$ are linearly independent in~$F^\times / F^{\times p}$.

  \begin{enumerate}
\item   Let 
\[ L = K[\sq{g(w)} : g \in \Gal(K/F)] \, .   \]
Then~$L/F$ is Galois with~$\Gal(L/F) \cong \f_p[\U_n] \rtimes \U_n$.

\item Let~$\sigma_1, \ldots, \sigma_{n-1}$ be the usual generators of the group~$\U_n(\f_p)$. Suppose that choices of~$p$-th roots~$\sq{g(w)} \in L$ have been made, for each~$g \in \Gal(K/F)$. Then there exist lifts $\tilde \sigma_1$, $\ldots$, $\tilde \sigma_{n-1} \in \Gal(L/F)$ satisfying~$\tilde \sigma_i(\sq{g(w)}) = \sq{\sigma_ig (w)}$ for all~$g \in \Gal(K/F)$. 

\item Let the extension~$M/F$ be the Galois closure of~$K[\sq{v}]/F$, where 
\[ v = \prod_{g \in \Gal(K/F)} g(w)^{\lambda_g} \, ,   \]
and~$\lambda_g \in \f_p$ is the top-right coefficient of the matrix~$g \in \U_n= \Gal(K/F)$. Then $\Gal(M/F) \cong \U_{n+1}$. In fact the homomorphism $\Gal(L/F) \to \Gal(M/F)$ can be identified with $f_1 : \f_p[\U_n] \rtimes \U_n \to \U_{n+1}$ introduced in the previous section.

\item The extension~$M/F$ is compatible with~$a_1,\ldots, a_n$, and in particular, the Massey product  $\langle a_1, \ldots, a_n \rangle$ vanishes.


\end{enumerate}

\end{prop}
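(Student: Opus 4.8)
The plan is to verify the four numbered assertions in order, since each one feeds into the next. For (1), I would start from the observation that $K[\sqrt[p]{g(w)} : g \in \Gal(K/F)]$ is obtained by adjoining $p$-th roots of a $\Gal(K/F)$-stable set of elements of $K^\times$, so $L/F$ is Galois. The Galois group sits in an extension $1 \to A \to \Gal(L/F) \to \Gal(K/F) \to 1$ where $A \hookrightarrow \Hom(\langle g(w) : g\rangle, \mu_p) $; by Kummer theory $A$ is elementary abelian and one wants to show it has dimension $|\U_n|$, i.e. that the classes of the $g(w)$ in $K^\times/K^{\times p}$ are linearly independent, and moreover that the extension is split with the semidirect-product action being left translation of $\U_n$ on $\f_p[\U_n]$. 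The independence is where the norm hypothesis $a_n = \N_{K/F}(w)$ and the linear independence of $a_1,\dots,a_n$ in $\lin F$ enter: a nontrivial $\f_p$-relation $\prod_g g(w)^{\mu_g} \in K^{\times p}$ would, after applying the argument of Lemma~\ref{lem-usual-lemma-rep-theory} to the $\f_p[\Gal(K/F)]$-module generated by the class of $w$ (whose ``sum over the group'' is the class of $\prod_g g(w) = a_n$, nonzero in $\lin F$ precisely by the independence assumption), force the $\mu_g$ to be constant, and then one is reduced to $a_n$ itself being a $p$-th power modulo the $a_i$, contradicting the independence of $a_1,\dots,a_n$. So $A \cong \f_p[\U_n]$ as a group, the conjugation action is visibly the permutation action of $\Gal(K/F)=\U_n$ on its group algebra, and splitting follows because $K/F$ already realizes $\U_n$ as a quotient; hence $\Gal(L/F) \cong \f_p[\U_n]\rtimes\U_n$.

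For (2): once $p$-th roots $\sqrt[p]{g(w)}$ have been fixed, any lift $\tilde\sigma_i \in \Gal(L/F)$ of $\sigma_i \in \Gal(K/F)$ sends $\sqrt[p]{g(w)}$ to $\dz^{c(g)}\sqrt[p]{\sigma_i g(w)}$ for some $c \colon \Gal(K/F)\to\f_p$; modifying $\tilde\sigma_i$ by an element of $A \cong \f_p[\U_n]$ — which is exactly the freedom to add an arbitrary $\f_p$-valued function on $\Gal(K/F)$ to the exponents — one can kill the cocycle $c$ and arrange $\tilde\sigma_i(\sqrt[p]{g(w)}) = \sqrt[p]{\sigma_i g(w)}$ for all $g$ simultaneously. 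This is the concrete incarnation of the section $\U_n \to \f_p[\U_n]\rtimes\U_n$.

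For (3): the element $v = \prod_{g} g(w)^{\lambda_g}$ with $\lambda_g$ the top-right entry of $g$ is designed so that $\sqrt[p]{v}$ generates, over $K$, the extension corresponding — under the identification of part (1) and the map $f_1$ of Section~\ref{sec-wreath-extensions} — to the last column of $\U_{n+1}$. Concretely, using the lifts from (2) I would compute $\tilde\sigma_i(\sqrt[p]{v}) = \prod_g \sqrt[p]{\sigma_i g(w)}^{\lambda_g} = \prod_h \sqrt[p]{h(w)}^{\lambda_{\sigma_i^{-1}h}}$, and the function $h \mapsto \lambda_{\sigma_i^{-1}h}$ differs from $h\mapsto\lambda_h$ by the function picking out the coefficient of $N \subset \U_{m+1}$, which is precisely the $1$-cocycle $\phi$ of Lemma~\ref{lem-Um-is-semidirect}/\ref{lem-semidirect-gives-cocycle} realizing the passage $\f_p[\U_n]\rtimes\U_n \to V_n\rtimes\U_n = \U_{n+1}$. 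So the Galois closure $M/F$ has $\Gal(M/F)$ equal to the image of $\Gal(L/F)$ under $f_1$, which is $\U_{n+1}$; one needs here that $\sqrt[p]{v}\notin K$, again a consequence of the independence established in (1) (the exponent function $\lambda$ is not in the image of the ``constants'' $= N$-direction inside $\f_p[\U_n]$).

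Finally (4) is essentially bookkeeping: under the identification $\Gal(M/F) \cong \U_{n+1}$ just obtained, the generators $\tilde\sigma_i$ map to the standard generators $\sigma_i$ of $\U_{n+1}$ for $1 \le i \le n-1$ by construction (part (3) matched the maps with $f_1$, which is the identity on $\U_n$), while the ``new'' generator coming from $\sqrt[p]{v}$ maps to $\sigma_n$; hence $s_i \circ (\text{this iso}) $ restricted to the $F$-Galois action is $\chi_{a_i}$ for each $i \le n-1$, and $s_n$ corresponds to $\chi_{a_n}$ because $F[\sqrt[p]{a_n}] \subset M$ with the right character (here one uses $a_n = \N_{K/F}(w)$, so that $\prod_g g(w) = a_n$ up to $p$-th powers, matching the fixed field of $\ker s_n$). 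Thus $M/F$ is compatible with $a_1,\dots,a_n$ in the sense of \S\ref{sec-prelim}, which by definition gives a homomorphism $G_F \to \U_{n+1}$ compatible with $\chi_{a_1},\dots,\chi_{a_n}$, i.e. $\langle a_1,\dots,a_n\rangle$ vanishes.

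The main obstacle is part (1) — specifically, proving that the $|\U_n|$ Kummer generators $g(w)$ are linearly independent in $K^\times/K^{\times p}$. Everything else is either Kummer-theoretic formalism (parts 2, 3) or a matching of explicit cocycles with the group-theoretic constructions of the previous sections (parts 3, 4). The independence is exactly the point where one must combine the representation-theoretic Lemma~\ref{lem-usual-lemma-rep-theory} (to reduce an arbitrary relation to a ``constant-exponent'' relation) with the hypothesis that $a_1,\dots,a_n$ are independent in $\lin F$ (to rule out the constant-exponent case), and getting the reduction step precisely right is the delicate part.
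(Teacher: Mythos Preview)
Your proposal follows the paper's strategy, and parts (2)--(4) are essentially right. The real gap is in the splitting step of (1).

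The sentence ``splitting follows because $K/F$ already realizes $\U_n$ as a quotient'' is not an argument: that $\U_n$ is the quotient is the \emph{setup} of the extension $1 \to A \to \Gal(L/F) \to \U_n \to 1$, not a reason why it splits. What actually forces the splitting is Shapiro's Lemma: once $A \cong \f_p[\U_n]$ as a $\U_n$-module, one has $\h^2(\U_n, \f_p[\U_n]) \cong \h^2(\{1\},\f_p) = 0$, so the extension class vanishes. The paper gives this argument and then, as an explicit complement, checks that the subgroup $G'$ generated by the lifts $\tilde\sigma_i$ from (2) meets $\Gal(L/K)$ trivially (any element of the intersection fixes every $\sq{g(w)}$ and hence all of $L$), so $G' \to \U_n$ is an isomorphism. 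Your (2) only produces the lifts of the generators individually and then asserts ``this is the concrete incarnation of the section'' without checking that the $\tilde\sigma_i$ satisfy the relations of $\U_n$; as written, neither route to the splitting is complete.

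Two smaller points. First, for Lemma~\ref{lem-usual-lemma-rep-theory} you need $[a_n] \ne 0$ in $\lin K$, not just in $\lin F$; you do supply the right reason a line later (``$a_n$ a $p$-th power modulo the $a_i$''), but the step deserves to be made explicit: $a_n \in K^{\times p}$ iff $F[\sq{a_n}] \subset K$, and since the Frattini quotient of $\U_n$ is $\f_p^{n-1}$ via $s_1,\ldots,s_{n-1}$, this would force $[a_n]_F$ into the span of $[a_1]_F,\ldots,[a_{n-1}]_F$. Second, for (3) the paper avoids your direct computation of $\tilde\sigma_i(\sq{v})$ by dualising the surjection $\f_p[\U_n] \to V_n$, $1 \mapsto e_n$: the Kummer submodule cutting out $M$ is the image of the transpose $V_n^* \to W$, generated by $\theta^*(e_1^*)$, and evaluating this on the basis $\{g\cdot\phi\}$ of $W^*$ reads off the exponents $\lambda_g$ directly. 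Your computational route can be made to work, but the duality argument is cleaner and identifies $\Gal(L/F) \to \Gal(M/F)$ with $f_1$ without any case analysis on~$i$.
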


\begin{proof}
In this proof we will treat the isomorphism~$\Gal(K/F) = \U_n$ as an equality (or an identification, if you will), and no particular notation will be used. Also, we will use {\em equivariant Kummer theory} repeatly. Briefly, recall that when~$\dz \in K$, the extensions~$E/K$ which are Galois with~$\Gal(E/F)$ a~$p$-elementary abelian group (and with~$E$ contained in a fixed algebraic closure of~$F$) are in bijection with the finite subgroups of~$\lin K$; those~$E$ for which~$E/F$ is Galois are in bijection with the~$\f_p[\Gal(K/F)]$-submodules of~$\lin K$. The bijection is given by~$A \mapsto K[\sq a : a \in A]$. Finally, if~$E$ corresponds to~$A$, then the subgroup~$\Gal(E/K)$ of~$\Gal(E/F)$, with the conjugation action, is isomorphic to the dual of~$A$, , as is expressed by the non-degenerate Kummer pairing 
\[ \Gal(E/F) \times A \longrightarrow \mu_n(K)  \]
defined by~$\langle \sigma , a  \rangle = \frac{\sigma (\sq a)} {\sq a}$. Finally, suppose~$E$ corresponds to~$A$, let~$V= \Gal(E/K)$, and let~$V' \subset V$ be a subgroup, defining the field~$E'$ such that~$\Gal(E/E') = V'$. To find the subgroup~$A'$ of $\lin K$ corresponding to~$E'$, one simply looks at the homomorphism~$V \to V/V'$ and its transpose~$(V/V')^* \to V^* = A$; its image is~$A'$. (This last fact follows from the other ones.)
  
(1) Equivariant Kummer theory tells us that~$L/F$ is Galois, with~$\Gal(L/K) \cong W^*$, where~$W$ is the $\Gal(K/F)$-module generated by~$w$ in~$K^\times / K^{\times p}$. There is an exact sequence 
\[ 0 \longrightarrow W^* \longrightarrow \Gal(L/F) \longrightarrow \Gal(K/F) \longrightarrow 1 \, .   \]
Lemma~\ref{lem-usual-lemma-rep-theory} implies that~$W$, and so also~$W^*$, is free (here we use that~$a_n$ is not a $p$-th power in~$K$). As a result of Shapiro's Lemma, we see that $\H^2(\Gal(K/F), W^*) = 0$, and  we conclude that~$\Gal(L/F)$ is a semi-direct product, as (1) claims. This is not yet explicit, but statement (2), to which we turn, makes up for this.

  Let~$\tau_i \in \Gal(L/F)$ be a lift of~$\sigma_i$. The number of such lifts is~$|\Gal(L/K)| = p^{|G|}$, where~$G= \Gal(K/F)$. For~$g \in \Gal(K/F)$, pick a favourite~$p$-th root~$\sq{g(w)} \in L$. It follows that $\tau_i(\sq{g(w)}) = \dz ^{i_g} \sq{\sigma_i g(w)}$, where~$\dz $ is our primitive root, and~$i_g$ is an integer with~$0 \le i_g < p$.

Now, on the one hand, $\tau_i$ is determined by the integers~$i_g$, and there are~$p^{|G|}$ choices for these. However, we have pointed out that there are actually $p^{|G|}$ lifts, so all choices indeed occur. In particular, we can pick a lift~$\tau_i$ with~$i_g= 0$ for all~$g$. This proves (2).

Consider now~$G'$, the subgroup of~$\Gal(L/F)$ generated by the elements~$\tilde \sigma_i$. Then (2) shows that, for~$\tau \in G'$ and~$g \in G$, the value of~$\tau (\sq{g(w)})$ only depends on the restriction of~$\tau $ to~$K$. So if~$\tau \in G' \cap \Gal(L/K)$, then~$\tau $ fixes all the elements~$\sq{g (w)}$,  and so~$\tau $ is the identity. It follows that~$G'$ maps isomorphically onto~$G$ {\em via} the restriction, so we see more explicitly that~$\Gal(L/F)$ is a semi-direct product. It is also useful here to remark that each $\tilde \sigma_i$ fixes~$\sq{a_n}$, since $a_n$ is the product of all the elements~$g(w)$.

As a preparation for (3), we introduce more notation. Let~$\phi \in \Gal(L/K) = W^*$ which, via the Kummer pairing, has $\langle \phi, w \rangle = 1$ and $\langle \phi, g(w) \rangle = 0$ for~$g \ne 1$, or in other words, we have $\phi (\sq{w}) = \dz \sq{w}$ and~$\phi (\sq{g(w)}) = \sq{g(w)}$ for~$g \ne 1$. The map~$\f_p[\U_n] \longrightarrow W^*$ mapping~$1$ to~$\phi $ is an isomorphism.

Consider then the map of~$\U_n$-modules~$\theta \colon W^* \longrightarrow \f_p^n$ mapping~$\phi $ to~${}^t(0, 0, \ldots, 0, 1)$ (the action on~$\f_p^n$ is the natural one, in fact~$\f_p^n$ is the ``natural module~$V_n$'' already introduced). Call~$V$ the kernel, and consider the intermediate field $K \subset M \subset L$ corresponding to~$V$ (which is a subgroup of~$\Gal(L/K)$); that is~$\Gal(L/M) = V$, and 
\[ \Gal(M/F) = \Gal(L/F) / V = (W^* \rtimes \U_n) / V = (W^*/V) \rtimes \U_n = \f_p^n\rtimes \U_n = \U_{n+1} \, .   \]

We can now prove (3). By Kummer theory, the field~$M$ as just defined is $M= K[A]$ where~$A$ is the image of~$\theta^* \colon (\f_p^n)^* \to W^{**} = W$. If~$e_1, e_2, \ldots $ is the canonical basis of~$\f_p^n$, then $\theta (\phi ) = e_n$, while $(\f_p^n)^*$ is generated by~$e_1^*$ as a~$\U_n$-module, so~$A$ is the module spanned by~$\theta^*(e_1^*) = e_1^* \circ \theta $. However, for~$g \in G$ we have 
\[ e_1^* \circ \theta ( g\cdot \phi )  = e_1^*( g \cdot e_n) \, .   \]
Now~$g \cdot e_n$ is the last column of~$g$, and~$e_1^*(g \cdot e_n)$ is the coefficient in the top-right corner of~$g$, written~$\lambda_g$ in the Proposition. It follows readily that~$\theta^*(e_1^*)$, which is determined by its values on the various~$g \cdot \phi $ for~$g \in G$, is in fact given by evaluation at 
\[ v= \sum_g \lambda_g \, g \cdot w \, ,   \]
as proposed. We have (3), and we turn to (4).

The elements~$\tilde \sigma_i \in \Gal(L/F)$ defined above, for~$1 \le i < n$, will be seen as elements of~$\Gal(M/F)$ from now on. By construction, they correspond to the elements~$\sigma_i \in \U_n$ when~$\U_n$ is seen as a subgroup of~$\U_{n+1} = \f_p^n \rtimes \U_n = \Gal(M/F)$ (by inserting a column of the identity matrix on the right). On the other hand, we have the element~$\phi \in \Gal(L/K) \subset \Gal(L/F)$, which we also see now as an element of~$\Gal(M/F)$. It corresponds to the element normally called~$\sigma_n \in \U_{n+1}$.

The group~$\U_{n+1}$ is endowed with the characters~$s_i \colon \U_{n+1} \to \f_p$ that we know. We claim that, when~$s_i$ is viewed as a character of the absolute Galois group~$G_F$, we have~$s_i =\chi_{a_i}$, the character associated to~$a_i$ via the isomorphism $F^\times / F^{\times p} \cong \Hom(G_F, \f_p)$, for~$1 \le i \le n$. This implies first that~$\sq{a_n} \in M$, rather than just~$\sq{a_n} \in L$, and the rest of (4) follows.

To prove the claim, we first note that for~$1 \le i < n$, we have ~$s_i = \chi_{a_i}$ by assumption, and the point is to extend to~$i=n$. Certainly~$\sq{a_n} \in L$, so 
\[ \chi_{a_n} = c_1 s_1 + \cdots + c_n s_n \, ,    \]
where the coefficients~$c_i$ are in~$\f_p$. This follows since~$\Gal(L/F)$ is generated by~$n$ elements, so its Frattini quotient is~$\f_p^n$, and~$\Hom(\Gal(L/F), \f_p)$ is generated by~$s_1,\ldots , s_n$.

By (2), we have~$\chi_{a_n}(\tilde \sigma_i)=0$ for~$1 \le i < n$. It follows that~$c_i = 0$ for these~$i$'s, so~$\chi_{a_n} = c_n s_n$. On the other hand, by evaluating at~$\phi = \sigma_n$, we find~$1 = c_n$.
\end{proof}

One can prove, conversely, that whenever a Galois extension~$M/F$ with $\Gal(M/F) \cong \U_{n+1}$ can be embedded into an~$\f_p[\U_n] \rtimes \U_n$-extension, with all the notation as above, then~$a_n$ is a norm from~$K$, the field corresponding to the quotient~$\U_n$. We will only prove this when~$n=3$; but we do this in all generality, including degenerate cases, and we provide a lot of details.

The next Proposition gives an overview of the situation, and will be supplemented below.

\begin{prop} \label{prop-hilbert-symbol}
  Let~$F$ be a field with~$\dz \in F$, and let~$a, b \in F^\times$. The following assertions are equivalent.

  \begin{enumerate}
\item $(a, b)_F=0$. 
\item There exists~$B \in F[\sq a]$ such that~$\N_{F[\sq a]/F}(B) = b$.
\item There exists a continuous homomorphism~$\gamma \colon G_F \to \U_3(\f_p)$ such that~$s_1 \circ \gamma = \chi_a$ and~$s_2 \circ \gamma = \chi_b$.  
\item There exists a continuous homomorphism~$\gamma \colon G_F \to \U_3 ^{(1)} = \f_p[\U_2] \rtimes \U_2$ such that~$s_1 \circ \gamma = \chi_a$ and~$s_2 \circ \gamma = \chi_b$. 

  \end{enumerate}

\end{prop}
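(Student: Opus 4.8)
The plan is to prove the cyclic chain $(3)\Rightarrow(1)\Rightarrow(2)\Rightarrow(4)\Rightarrow(3)$, after first disposing of the degenerate case $a\in F^{\times p}$. In that case $F[\sq a]=F$ and $\chi_a=0$, so (1) reads $0=0$, (2) holds with $B=b$, a witness for (3) is $g\mapsto(0,\chi_b(g))$ into $\ker(s_1)\cong\f_p^2\subset\U_3$, and a witness for (4) is $g\mapsto(\chi_b(g)\cdot 1,\,1)\in\f_p[\U_2]\rtimes\U_2=\U_3^{(1)}$ — here $s_1\circ\gamma=0$ because the $\U_2$-component is trivial, and $s_2\circ\gamma=\chi_b$ because, reading off the definition of $f_1$, the class $s_2$ restricted to $\f_p[\U_2]$ is the augmentation $\varepsilon$ and $\varepsilon(1)=1$. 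So from now on $a\notin F^{\times p}$; set $K=F[\sq a]$, a cyclic extension of degree $p$ with $\Gal(K/F)=\U_2=\langle\sigma\rangle$ and $\chi_a\colon G_F\twoheadrightarrow\Gal(K/F)$ the quotient map.

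For $(1)\Leftrightarrow(2)$ I would invoke the classical description of the symbol: since $\dz\in F$, the cup product $(a,b)_F\in\h^2(F,\f_p)$ is the Brauer class of the cyclic algebra attached to $\chi_a$ and $b$, which is split exactly when $b\in\N_{K/F}(K^\times)$. For $(1)\Leftrightarrow(3)$ I would use obstruction theory. The classes $s_1,s_2$ vanish on $\sZ(\U_3)$, hence descend to $\bar\U_3=\f_p\times\f_p$, and by the computation in \S\ref{sec-prelim} the class of the central extension $0\to\sZ(\U_3)\to\U_3\to\bar\U_3\to 1$ is $s_1\cupp s_2$. Thus a homomorphism $\gamma$ as in (3) exists iff $(\chi_a,\chi_b)\colon G_F\to\f_p^2$ lifts through $\U_3$, and the obstruction to lifting is $(\chi_a,\chi_b)^*(s_1\cupp s_2)=\chi_a\cupp\chi_b=(a,b)_F$; any lift automatically satisfies $s_i\circ\gamma=\chi_{a_i}$ since the $s_i$ factor through $\bar\U_3$.

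The implication $(4)\Rightarrow(3)$ is immediate: compose $\gamma\colon G_F\to\U_3^{(1)}$ with $f_1\colon\U_3^{(1)}\to\U_3$, recalling that $s_i$ on $\U_3^{(1)}$ means $s_i\circ f_1$. The real content is $(2)\Rightarrow(4)$, which I would argue as follows. Because $\U_3^{(1)}=\f_p[\U_2]\rtimes\U_2$ is itself split, a homomorphism $\gamma\colon G_F\to\U_3^{(1)}$ with $s_1\circ\gamma=\chi_a$ is exactly a pair $(\psi,\chi_a)$ where $\psi\colon G_F\to\f_p[\U_2]$ is a continuous $1$-cocycle for the action of $G_F$ through $\chi_a$; identifying $\f_p[\U_2]$ with the $G_F$-module $\f_p[\Gal(K/F)]=\Ind_{G_K}^{G_F}\f_p$, the cocycle $\psi$ represents a class in $\h^1(G_F,\Ind_{G_K}^{G_F}\f_p)$. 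A direct matrix computation with $f_1$ shows that the remaining condition $s_2\circ\gamma=\chi_b$ becomes $\varepsilon\circ\psi=\chi_b$, with $\varepsilon$ the augmentation; since $\varepsilon$ is a morphism of $G_F$-modules and coboundaries in the trivial module $\f_p$ vanish, $\varepsilon\circ\psi$ depends only on $[\psi]$. Hence (4) holds iff $\chi_b$ lies in the image of $\varepsilon_*\colon\h^1(G_F,\Ind_{G_K}^{G_F}\f_p)\to\h^1(G_F,\f_p)$. By Shapiro's lemma the source is $\h^1(G_K,\f_p)\cong K^\times/K^{\times p}$, and under this identification $\varepsilon_*$ is corestriction, which on $\h^1$ is induced by the norm $\N_{K/F}$. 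So a suitable $\psi$ exists iff $b$ is, modulo $F^{\times p}$, a norm from $K$; this is granted by (2), and one then absorbs the $p$-th power into $B$ and upgrades the cohomological identity $[\varepsilon\psi]=[\chi_b]$ to an honest equality of homomorphisms.

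The step I expect to be the main obstacle is the identification, under Shapiro's lemma, of the augmentation-induced map $\varepsilon_*$ with the corestriction/norm map $\N_{K/F}$ on $\h^1$; everything else is bookkeeping. I note that the argument above is already uniform over the ``degenerate'' sub-cases in which $a$ and $b$ become $\f_p$-linearly dependent in $F^\times/F^{\times p}$ — the only case genuinely set apart is $a\in F^{\times p}$ — so no extra case analysis beyond the one above should be needed.
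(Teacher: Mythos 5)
Your proof is correct, and for the key implication (2) $\Rightarrow$ (4) it takes a genuinely different and cleaner route than the paper. The paper establishes this direction via Proposition~\ref{prop-2-implies-4}, which is a long case analysis constructing explicit Kummer generators and explicit embeddings of $\Gal(L/F)$ into $\f_p[\U_2] \rtimes \U_2$, with the case $[a]_F$ and $[b]_F$ colinear and nonzero requiring three further subcases (including one about $\dz$ and $a$ being independent, where a specific $C_{p^2} \rtimes C_p$-presentation is built by hand). Your argument instead observes that, once $a \notin F^{\times p}$, homomorphisms $\gamma\colon G_F \to \f_p[\U_2]\rtimes\U_2$ projecting to $\chi_a$ are classified by $1$-cocycles $\psi \in Z^1(G_F, \f_p[\U_2])$, that $s_2\circ\gamma = \varepsilon\circ\psi$ depends only on the class $[\psi]$ because coboundaries into the trivial module $\f_p$ vanish, and that $\varepsilon_* \colon \h^1(G_F, \Ind_{G_K}^{G_F}\f_p) \to \h^1(G_F,\f_p)$ is, via Shapiro, the corestriction $\h^1(K,\f_p)\to\h^1(F,\f_p)$, i.e.\ the norm on $\lin K \to \lin F$. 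This is uniform in $a,b$ and eliminates the case analysis entirely. (The paper does in fact use the same corestriction$=$norm identification, but only for the easier direction (4) $\Rightarrow$ (2), in Proposition~\ref{prop-4-implies-3}, where the corestriction is even computed explicitly on cochains; you have noticed that the same tool also drives the harder direction.) The trade-off: the paper's Propositions~\ref{prop-4-implies-3} and~\ref{prop-2-implies-4} prove a refined statement — they pin down the submodule $\gamma^*(\h^1(\f_p[\U_2],\f_p))$ of $\lin K$ as the $\Gal(K/F)$-module generated by $[B]$ — and that refinement is what gets used in the proof of Theorem~\ref{thm-main}. Your cohomological argument can be made to yield this too (the Shapiro isomorphism identifies $[\psi]$ with $[B]\in\lin K$, and the image of $\gamma^*$ is then the $\Gal(K/F)$-orbit span of $[B]$), but as written it only establishes the bare equivalence asked for in Proposition~\ref{prop-hilbert-symbol}, which is what you were asked to prove.

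The one place where you hedged — identifying $\varepsilon_*$ with corestriction under Shapiro — is indeed the heart of the argument, but it is standard: corestriction for a finite-index subgroup $H\le G$ and trivial coefficients is by definition the composite $\h^*(H,\f_p)\xrightarrow{\sim}\h^*(G,\Ind_H^G\f_p)\to\h^*(G,\f_p)$, the last map being the counit $\Ind_H^G\f_p=\f_p[G/H]\to\f_p$, which is precisely the augmentation; and corestriction on $\h^1$ with Kummer coefficients is the norm. No gap there.
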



\begin{proof}
The equivalence of (1) and (2) is classical, see~\cite[Chap.\ XIV, \S2, Prop.~4]{corps-locaux}. The equivalence of (1) and (3) is obtained by noting that the cohomology class of the extension 
\[ 0 \longrightarrow \f_p \longrightarrow \U_3 \longrightarrow C_p \times C_p \longrightarrow 1  \]
is the cup-product~$s_1 \cupp s_2$, while~$(a,b) = \chi_a \cupp \chi_b$. The implication (4) $\implies$ (3) is trivial, as~$\U_3$ is a quotient of~$\U_3 ^{(1)}$. However, Proposition~\ref{prop-4-implies-3} below gives more precise information. 

It remains to prove that (1), (2), and/or (3) $\implies$ (4), and this is done in Proposition~\ref{prop-2-implies-4} below, with extra details.
\end{proof}

\begin{prop} \label{prop-4-implies-3}
  With notation as in Proposition~\ref{prop-hilbert-symbol}, suppose~$\gamma $ as in (4) is given. Let $K= F[\sq a]$, so that~$G_K = \gamma^{-1}(\f_p[\U_2])$. Then we may pick an element~$B \in K^\times$ such that~$\N_{K/F}(B)= b f^p$ for some~$f \in F^\times$, and such that the character~$\chi_B \in \h^1(K, \f_p)$ is in the image of 
\[ \gamma ^* \colon \h^1(\f_p[\U_2], \f_p ) \longrightarrow \h^1(K , \f_p) \, .   \]
\end{prop}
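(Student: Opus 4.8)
The starting point is the homomorphism $\gamma\colon G_F\to \f_p[\U_2]\rtimes \U_2 = \U_3^{(1)}$ satisfying $s_1\circ\gamma=\chi_a$ and $s_2\circ\gamma=\chi_b$. Since $s_1\circ\gamma=\chi_a$, the kernel of $s_1\circ\gamma$ is $G_K$ where $K=F[\sq a]$, and this kernel is exactly $\gamma^{-1}(\f_p[\U_2])$ because $\f_p[\U_2]=\ker(s_1)$ inside $\U_3^{(1)}$ (as noted in the remark following Proposition~\ref{prop-explicit-U5}). So $\gamma$ restricts to a homomorphism $\lambda\colon G_K\to \f_p[\U_2]$, which is an $\f_p$-linear functional's worth of data: concretely $\lambda$ determines, and is determined by, a character $G_K\to\f_p$ obtained by composing with any linear form on $\f_p[\U_2]$. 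The plan is to choose $B$ so that $\chi_B$ is precisely (a suitable multiple of) the composite of $\lambda$ with the \emph{augmentation-type} functional $\f_p[\U_2]\to\f_p$, and then to verify the norm condition by a transfer (corestriction) computation.

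First I would set up the Kummer-theoretic dictionary. The abelian quotient of $G_K$ that $\lambda$ factors through corresponds, by Kummer theory over $K$ (here $\dz\in F\subset K$), to a subgroup of $\lin K$; pick $B\in K^\times$ generating the line inside $\lin K$ dual to the functional on $\f_p[\U_2]$ that we want, so that by construction $\chi_B\in\h^1(K,\f_p)$ lies in the image of $\gamma^*\colon\h^1(\f_p[\U_2],\f_p)\to\h^1(K,\f_p)$. This already gives the second assertion of the Proposition. It remains to show $B$ can be chosen (i.e.\ scaled, or the functional chosen) so that $\N_{K/F}(B)=bf^p$ for some $f\in F^\times$.

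For the norm condition, the key is a compatibility between corestriction in Galois cohomology and the structure of $\U_3$ as a quotient of $\U_3^{(1)}$. Composing $\gamma$ with $f_1\colon \U_3^{(1)}\to\U_3$ (which sends $1\in\f_p[\U_2]$ to the generator $v$ of $V_2=\f_p^2$, i.e.\ collapses $\f_p[\U_2]$ onto $\f_p$ via the functional $1\mapsto 1$) gives a homomorphism $G_F\to\U_3$ compatible with $a,b$; by Proposition~\ref{prop-hilbert-symbol}, equivalence of (1) and (2), this corresponds to some element whose norm from $K$ is $b$ up to $p$-th powers. I would make this precise by identifying $\chi_b|_{G_K}=\cores$-dual data: the restriction $\chi_b|_{G_K}\in\h^1(K,\f_p)$ equals $\chi_B$ where $B$ is, up to $p$-th powers, a norm-preimage of $b$. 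Concretely, the element $\phi\in\U_3$ that is the image of $1\in\f_p[\U_2]$ is the generator denoted $\sigma_2$, and $s_2$ evaluated along the $G_K$-side recovers exactly the Kummer character of $B$; then $\N_{K/F}(B)$ is computed by the transfer formula $\cores_{G_K}^{G_F}(\chi_B)$ paired against $\chi_b$, which is governed by $s_2\circ\gamma=\chi_b$ and forces $\N_{K/F}(B)\equiv b\pmod{F^{\times p}}$, i.e.\ $\N_{K/F}(B)=bf^p$.

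\textbf{Main obstacle.} The delicate point is matching up two different ``$1\mapsto$'' conventions: the functional $\f_p[\U_2]\to\f_p$ used to cut out $B$ via Kummer theory over $K$, versus the surjection $\f_p[\U_2]\to V_2\cong\f_p$ used to define $f_1$ and hence to connect to the classical Hilbert-symbol statement. These need not be literally the same functional, and reconciling them — showing that the $B$ produced by the $\U_3^{(1)}$-data, after possibly scaling by a $p$-th power and an element of $F^\times$, is genuinely a norm-up-to-$p$-th-powers of $b$, with the extra $f^p$ factor accounting for the discrepancy — is the crux. I expect this is handled by a careful bookkeeping of the $\f_p[\U_2]$-module structure (the module generated by $B$ in $\lin K$ is free by Lemma~\ref{lem-usual-lemma-rep-theory} in the generic case, so the functionals can be matched after an invertible change), together with the standard identification of $\N_{K/F}$ on Kummer classes with the transfer map $\h^1(K,\f_p)\to\h^1(F,\f_p)$. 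The cleanest route is probably to prove a small lemma: if $\gamma\colon G_F\to\f_p[C_p]\rtimes C_p$ has $s_1\circ\gamma=\chi_a$, $s_2\circ\gamma=\chi_b$, then the restriction of $\gamma$ to $G_K$ composed with the augmentation $\f_p[C_p]\to\f_p$ (sum of coefficients) yields a character whose image under $\cores$ is $\chi_b$ — this is essentially the assertion that $f_1$ intertwines the two pictures — and then translate through Kummer theory to get $\N_{K/F}(B)=bf^p$.
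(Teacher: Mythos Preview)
Your overall strategy---define $B$ via Kummer theory from a functional $\alpha\colon \f_p[\U_2]\to\f_p$ pulled back along $\gamma|_{G_K}$, and then verify the norm condition by computing the corestriction---is exactly the paper's approach. But your proposed choice of $\alpha$ is wrong, and this is not a bookkeeping detail: it makes the argument fail.

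You suggest taking $\alpha$ to be the augmentation (sum of coefficients). But the augmentation is precisely $s_2$ restricted to $N=\f_p[\U_2]$: in the coordinates of the paper, $s_2((x_1,\ldots,x_p)\sigma^i)=x_1+\cdots+x_p$. Hence $\alpha\circ\gamma|_{G_K}=s_2\circ\gamma|_{G_K}=\chi_b|_{G_K}$, and then
\[
\cores_{G_K}^{G_F}(\chi_b|_{G_K})=[K:F]\cdot\chi_b=p\,\chi_b=0,
\]
not $\chi_b$. So with this $\alpha$ the resulting $B$ satisfies $\N_{K/F}(B)\in F^{\times p}$, which is useless. (Your alternative route through $f_1$ has the same defect: you describe $f_1$ as collapsing $\f_p[\U_2]$ onto $\f_p$, but $f_1$ maps onto $V_2\cong\f_p^2$, and the induced functional you end up with is again $s_2|_N$.)

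The paper's fix is to take \emph{any} linear form $\alpha$ on $N$ with $\alpha(\N(1))=1$, where $\N(1)=1+\sigma\cdot 1+\cdots+\sigma^{p-1}\cdot 1$. Such an $\alpha$ exists (e.g.\ the coordinate functional dual to $1$), and is never the augmentation, since the augmentation sends $\N(1)$ to $p=0$. One then checks directly, using the explicit formula for corestriction with transversal $T=\{1,\sigma,\ldots,\sigma^{p-1}\}$, that $\cores_{N}^{\U_3^{(1)}}(\alpha)=s_2$: indeed $\cores(\alpha)(1)=\alpha(\N(1))=1$ and $\cores(\alpha)(\sigma)=\alpha(\sigma^p)=\alpha(0)=0$. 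Transporting this identity through $\gamma$ gives $\cores_{G_K}^{G_F}(\gamma^*\alpha)=\chi_b$, and then $B$ with $\chi_B=\gamma^*\alpha$ has the required norm. So the ``small lemma'' you anticipate is exactly right in spirit, but the hypothesis must be $\alpha(\N(1))=1$, not $\alpha=$ augmentation.
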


\begin{proof}

  Let us dispose of an easy case first: if~$a$ is a~$p$-th power in~$F$, then~$K=F$, we take~$B= b$, so~$\chi_B  = \chi_b = s_2 \circ \gamma = \gamma^*(s_2)$. Now suppose that~$a$ is not a~$p$-th power.
  
  
  The cleanest proof will be obtained by a careful examination of the corestriction map in group cohomology. Let~$\Gamma $ be a profinite group, and let~$N$ be an open subgroup; the corestriction 
\[ \cores = \cores_{N, \Gamma } \colon \h^1(N, \f_p) = \Hom(N, \f_p) \longrightarrow \h^1(\Gamma , \f_p) = \Hom(\Gamma, \f_p)  \]
has the following explicit description. Let~$\alpha \in \Hom(N, \f_p)$, then~$\cores(\alpha )$ is $\alpha ' \colon \Gamma \to \f_p$ given by 
\[ \alpha '(g) = \sum_{t \in T} \alpha (n_{tg}) \, .   \tag{*} \]
Here we have written~$\Gamma $ as the disjoint union of cosets~$N t$ for~$t \in T$; any element~$g \in \Gamma $ is then written~$g = n_g t_g$ with~$n_g \in N$ and~$t_g \in T$. When~$N$ is normal in~$\Gamma $, we have for~$n \in N$ the formula~$\alpha' (n) = \sum_t \alpha (t n t^{-1})$. If~$N$ is assumed to be abelian as well, for purposes of intuition we note that we have~$\alpha '(n) = \alpha (\N(n))$ where
\[ \N(n) = \sum_t t \cdot  n   \]
(additive notation on~$N$, the action is conjugation). Another interesting particular case is when~$T = \{ 1, \sigma, \ldots, \sigma^{p-1} \}$ for some element~$\sigma \in \Gamma $; then~$\alpha ' (\sigma ) = \alpha (\sigma^p)$.

As a warm-up, apply this to~$\Gamma = \U_3 ^{(1)} = \f_p[\U_2] \rtimes \U_2$ and~$N = \f_p[\U_2]$ (which is abelian and normal); take~$T = \U_2 = \{ 1, \sigma, \ldots, \sigma^{p-1} \}$ for some element~$\sigma $ satisfying~$\sigma^p = 1$. If~$\alpha \colon N \to \f_p$ is any linear form taking the value~$1$ on 
\[ \N(1)  = 1 + \sigma \cdot 1+ \sigma^2 \cdot 1 + \cdots + \sigma^{p-1} \cdot 1 \in N \, ,   \]
then it follows that~$\alpha ' = \cores_{N, \Gamma }(\alpha )$ is none other than~$s_2$. For this, it may useful to notice that~$s_2 \colon \U_3 ^{(1)} \to \f_p$ can be described as the composition 
\[ \U_3 ^{(1)} \longrightarrow  \U_3 ^{(1)} / \rad(N) = N / \rad(N) \times \U_2 \longrightarrow  N / \rad(N) \cong  \f_p \, .   \]
Here~$\rad(N)$ is the image of~$\sigma - 1$ on~$N$ (which is actually the radical of the~$C_p$-module~$N$, as in representation theory). Alternatively, an explicit formula for~$s_2$ is given in the next proof. Then we check that~$s_2$ and~$\alpha '$ take the same value on~$1 \in N$ (namely~$1$), and on~$\sigma $ (namely~$0$), proving that~$\alpha' = s_2$.

It is only marginally more complicated to deal with~$\Gamma  = G_F$ and~$N = G_K$ (which is normal but not abelian in general), and~$T = \{ 1, \tilde \sigma , \ldots, \tilde \sigma^{p-1} \}$ for some element~$\tilde \sigma \in G_F$ with~$\gamma (\tilde \sigma) = \sigma \in \U_3 ^{(1)}$ (such a~$\tilde \sigma $ exists by our assumption on~$a$). Here~$\tilde \sigma^{p} \in \ker(\gamma )$. Let~$\tilde \alpha = \alpha \circ \gamma $ and~$\tilde \alpha ' = \cores_{G_K, G_F}(\tilde \alpha )$, where~$\alpha $ was just discussed.

The expression (*) shows that~$\tilde \alpha' $ vanishes on~$\ker(\gamma )$ (a normal subgroup contained in~$N$), as does~$\tilde \alpha $. That~$\tilde \alpha ' = s_2 \circ \gamma$ is now easily deduced from the previous case.

The rest of the argument is just a translation into Galois-theoretic language, letting~$B \in \lin K$ be the element such that~$\chi_B = \tilde \alpha = \gamma ^*(\alpha ) \in \h^1(K, \f_p)$; its corestriction is~$\tilde \alpha ' = \chi_b \in \h^1(F, \f_p)$.
\end{proof}

This Proposition establishes that (4) $\implies$ (2) in Proposition~\ref{prop-hilbert-symbol}, for~$\N_{K/F}(Bf^{-1}) = b$ (note that, in the case~$K=F$, we have seen that we could take~$f=1$). Now, we turn to the proof of (2) $\implies$ (4), establishing a little more.

\begin{prop} \label{prop-2-implies-4}
    With notation as in Proposition~\ref{prop-hilbert-symbol}, suppose there exists~$B_0$ with $\N_{F[\sq a]/F}(B_0)= b$. Then we may pick~$B$ such that~$\N_{F[\sq a]/F}(B) = b$, and such that the following holds. Let~$K= F[\sq a]$ and~$G = \Gal(K/F)$, and put 
\[ L= K[\sq{g(B)} : g \in G] \, .   \]
Then~$\Gal(L/F)$ can be identified with a subgroup of~$\f_p[\U_2] \rtimes \U_2$, yielding the homomorphism~$\gamma $ as in (4), that is, compatible with~$a$ and~$b$. Also $\gamma^{-1}(\f_p[\U_2]) = G_{K}$, and the image of the induced map 
\[ \h^1(\f_p[\U_2], \f_p) \longrightarrow \h^1(K, \f_p) \cong K^\times / K^{\times p}  \]
is the~$G$-module generated by the class of~$B$.

What is more, suppose we are in one of the following favourable cases: \begin{itemize}
\item[(i)] $a$ and $b$ are linearly independent in~$\lin F$;
\item[(ii)] $a$ is a~$p$-th power in~$F$;
  \item[(iii)] $p=2$.
\end{itemize}

Then we need not alter the given~$B_0$, that is, the above is true with~$B=B_0$.
\end{prop}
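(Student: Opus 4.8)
\emph{Setup.} The plan is to run everything through equivariant Kummer theory and the module theory of $G:=\Gal(K/F)\cong C_p$, where $K=F[\sq a]$. If $a\in F^{\times p}$ then $K=F$, the hypothesis forces $B_0=b$, and it suffices to let $\gamma$ land in a line of $\f_p[\U_2]$ on which $s_2$ is an isomorphism; this is case (ii), so assume $a\notin F^{\times p}$ and let $\sigma$ generate $G$. For \emph{any} $B$ with $\N_{K/F}(B)=b$, equivariant Kummer theory makes $L/F$ Galois and fits it into $1\to W^{*}\to\Gal(L/F)\to G\to1$ with $W=\f_p[G]\cdot\bar B\subseteq\lin K$ the cyclic submodule generated by $\bar B$; the ``image'' mentioned in the statement is automatically $W$, being the transpose of $W^{*}\inj\f_p[\U_2]$ evaluated on $\gamma|_{G_K}$. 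Two elementary facts drive the argument. First, since the norm element of $\f_p[C_p]$ is $(\sigma-1)^{p-1}$, Lemma~\ref{lem-usual-lemma-rep-theory} shows $W$ is free over $\f_p[G]$ iff $\N\bar B=\overline{\N_{K/F}(B)}=\bar b\neq0$ in $\lin K$, i.e.\ iff $b\notin K^{\times p}$, i.e.\ iff $\bar b\notin\langle\bar a\rangle$; this depends only on $b$. Second, a matrix computation through $f_1$ shows that $s_2$ restricted to $\f_p[\U_2]=\ker(s_1)\subset\U_3^{(1)}$ is the augmentation $\varepsilon$; hence $s_2$ kills every proper $\U_2$-submodule of $\f_p[\U_2]$, and any subgroup of $\U_3^{(1)}$ on which $s_2$ is non-zero must meet the units of $\f_p[\U_2]$.

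\emph{The free regime (case (i)).} Here $\bar a,\bar b$ are independent, so $b\notin K^{\times p}$ for \emph{every} $B$ of norm $b$; take $B=B_0$. Then $W^{*}=\Gal(L/K)$ is free, so $\h^{2}(G,W^{*})=0$ by Shapiro and $\Gal(L/F)=W^{*}\rtimes G$. Fixing the $\f_p[G]$-isomorphism $\f_p[\U_2]=\f_p[G]\xrightarrow{\ \sim\ }W$, $1\mapsto\bar B_0$, together with the standard $G$-invariant self-duality $\f_p[G]\cong\f_p[G]^{*}$, identifies $\Gal(L/F)$ with $\f_p[\U_2]\rtimes\U_2=\U_3^{(1)}$; under it $s_1$ becomes the projection to $G$, equal to $\chi_a$ after normalising $\sigma$ to the standard generator, and $\gamma^{-1}(\f_p[\U_2])=G_K$. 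For the delicate point, the warm-up in the proof of Proposition~\ref{prop-4-implies-3} gives $s_2=\cores_{\f_p[\U_2],\U_3^{(1)}}(\alpha)$ for any $\alpha$ with $\alpha(\N(1))=1$, and the self-duality is chosen so that the $\alpha$ corresponding to $1\in\f_p[\U_2]$ has exactly this property; naturality of corestriction then gives $s_2\circ\gamma=\cores_{G_K,G_F}(\chi_{B_0})=\chi_{\N_{K/F}(B_0)}=\chi_b$.

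\emph{The degenerate regime, $p$ odd.} Now $\bar b\in\langle\bar a\rangle$: either $b=f^{p}$ or $b=a^{j}g^{p}$ with $j\not\equiv0$, $f,g\in F^{\times}$. The crucial input is that, $p$ being odd, $\N_{K/F}(\sq a)=a$, so $a$ is a norm from $K$ and $\chi_a$ lifts to a character $G_F\to\Z/p^{2}$; concretely one produces $\beta\in K^{\times}$ with $\bar\beta$ $G$-fixed, $\bar\beta$ not in the image of $\lin F$, and $\N_{K/F}(\beta)=a$ \emph{exactly} --- take any Kummer generator of the $C_{p^{2}}$-extension containing $K$ and rescale it by an element of $F^{\times}$, using $\N_{K/F}(F^{\times})=F^{\times p}$. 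Set $B=f$ when $b=f^{p}$, and $B=\beta^{j}g$ when $b=a^{j}g^{p}$. In either case $\bar B$ is $G$-fixed, so $W=\langle\bar B\rangle$ is trivial of dimension $\le1$, and $\N_{K/F}(B)=b$. Tracking the lift of $\sigma$ to $\Gal(L/F)=\Gal(K[\sq B]/F)$ shows $\Gal(L/F)\cong C_p\times C_p$ in the first case (because $\sigma(B)/B=1$, killing the relevant invariant) and $\Gal(L/F)\cong C_{p^{2}}$ in the second (the invariant is $j\cdot s_\beta\neq0$, where $s_\beta\neq0$ records that $K[\sq\beta]/F$ is $C_{p^{2}}$, which is automatic since $\bar\beta$ is not in the image of $\lin F$). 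Finally embed: $C_p\times C_p\cong(\text{socle of }\f_p[\U_2])\times\U_2\subset\U_3^{(1)}$, on which $s_1$ is an isomorphism on the $\U_2$-factor and zero on the socle, and $s_2$ vanishes altogether, giving $s_1\gamma=\chi_a$, $s_2\gamma=0=\chi_b$; and $C_{p^{2}}\cong\langle(j\cdot1)\,\sigma_1\rangle\subset\U_3^{(1)}$, an element of order $p^{2}$, on which $s_1$ and $s_2$ are the two characters of $C_{p^{2}}$ with generator-values $1$ and $\varepsilon(j\cdot1)=j$, giving $s_1\gamma=\chi_a$, $s_2\gamma=j\chi_a=\chi_b$. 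In each case $\gamma^{-1}(\f_p[\U_2])=G_K$ and the image of $\h^{1}(\f_p[\U_2],\f_p)\to\h^{1}(K,\f_p)$ is $\langle\bar B\rangle=W$, as required.

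\emph{The case $p=2$, and the main obstacle.} Then $\U_3^{(1)}=\U_3$ is $D_4=\f_2[C_2]\rtimes C_2$, and I claim $B=B_0$ always works, by computing $\Gal(L/F)$ directly. If $\bar a,\bar b$ are independent we are in the free regime; otherwise $\bar B_0$ cannot generate a free module, and if $\bar B_0=0$ then $b\in F^{\times2}$ and $L=K$, which is trivial, so suppose $\bar B_0\neq0$ is $G$-fixed, write $\sigma(B_0)=B_0c^{2}$, and note $b=\N_{K/F}(B_0)=(B_0c)^{2}$, so $B_0c\in K$ with $(B_0c)^{2}\in F^{\times}$. If $b\notin F^{\times2}$ then $B_0c\notin F$, so $\sigma(B_0c)=-B_0c$, hence $\N_{K/F}(c)=-1$ and $\Gal(L/F)=C_4$, which embeds into $D_4$ as its order-$4$ subgroup, on which $s_1=s_2$, matching $\chi_a=\chi_b$. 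If $b\in F^{\times2}$ then $B_0c\in F$, so $\N_{K/F}(c)=1$ and $\Gal(L/F)=C_2\times C_2$, which embeds into $D_4$ as a Klein four-subgroup with $\Gal(L/K)$ sent to the centre, on which $s_1$ matches $\chi_a$ and $s_2$ vanishes, matching $\chi_b=0$; the remaining bookkeeping is read off from these embeddings. The hard part is the degenerate regime for $p$ odd: one must manufacture a Kummer generator $B$ that is $G$-fixed modulo $p$-th powers \emph{and} of norm exactly $b$, and then verify that $\Gal(L/F)$ is the correct small $p$-group rather than a non-split extension that fails to embed compatibly with $s_2$ --- this is precisely where the automatic $C_{p^{2}}$-realisation $\N_{K/F}(\sq a)=a$ is used, and it is what breaks for $p=2$, forcing the separate dihedral analysis there.
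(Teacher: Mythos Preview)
Your treatment of the free regime (case (i)), of case (ii), and of $p=2$ is essentially sound and parallels what the paper does. The serious gap is in the ``degenerate regime, $p$ odd''. You write that since $\N_{K/F}(\sq a)=a$, the character $\chi_a$ lifts to $G_F\to\Z/p^2$, and you then build everything on a Kummer generator $\beta$ of a $C_{p^2}$-extension containing $K$. But the norm identity $\N_{K/F}(\sq a)=a$ only gives $(a,a)_F=0$, which is automatic for odd $p$; it does \emph{not} give the Bockstein obstruction $\beta(\chi_a)=0$. With $\dz\in F$, that obstruction is (up to sign) $(a,\dz)_F$, and there is no reason for it to vanish. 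Equivalently, the cokernel of $\lin F\to(\lin K)^G$ is $\h^1(G,K^{\times p})$, and one computes (via $K^{\times p}\cong K^\times/\mu_p$ and Hilbert~90) that this group is zero precisely when $\dz\notin\N_{K/F}(K^\times)$, i.e.\ when $(a,\dz)_F\neq 0$. In that case there is \emph{no} $\beta\in K^\times$ with $\bar\beta$ fixed by $G$ yet outside the image of $\lin F$, hence no $C_{p^2}$-extension over $K$, and your construction of $B$ collapses.

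The paper's proof does not try to force $\bar B$ to be $G$-fixed. In the colinear case it takes $B=\sq a$, so that $L=F[\sq{\dz},a^{1/p^2}]$, and then splits into three subcases according to the position of $\dz$: if $\sq\dz\in F$ one gets $\Gal(L/F)\cong C_{p^2}$; if $\bar a$ and $\bar\dz$ are proportional one gets a cyclotomic $C_{p^2}$; and crucially, if $\bar a$ and $\bar\dz$ are independent (which is exactly where $(a,\dz)_F$ may be nonzero), one finds $\Gal(L/F)\cong C_{p^2}\rtimes C_p$, a non-abelian group of order $p^3$, and embeds it explicitly in $\f_p[\U_2]\rtimes\U_2$ via $\tau_1\mapsto(1,0,\ldots,0)\sigma$ and $\tau_2\mapsto(0,1,2,\ldots,p-1)^{-1}$. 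Your argument misses this last subcase entirely; it is the heart of the ``automatic realization'' phenomenon the paper is after, and it cannot be reached by a $B$ whose class is $G$-fixed.
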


\begin{proof}
Some notation will be helpful. The group~$\U_2$ is cyclic of order~$p$, and we let~$\sigma $ be a generator. The elements of~$\f_p[\U_2]$ will be written using their coordinates in the basis~$1, \sigma \cdot 1, \ldots, \sigma^{p-1} \cdot 1$, so that a typical element of~$\f_p[\U_2] \rtimes \U_2$ is~$(x_1, \ldots, x_p) \sigma^i$ with~$x_k \in \f_p$ and~$i$ an integer. We have~$\sigma (x_1, \ldots, x_p) \sigma^{-1} = (x_p, x_1, x_2, \ldots, x_{p-1})$. The maps $s_1, s_2 \colon \f_p[\U_2] \rtimes \U_2 \to \f_p$ are given by 
\[ s_1((x_1, \ldots, x_p) \sigma^i) = i \, , \qquad s_2((x_1, \ldots, x_p) \sigma^i) = x_1 + \cdots + x_p \, .   \]
Also, a general comment is that the last statement of the Proposition (about the~$G$-module generated by~$B$) is obvious by construction, from Kummer theory (we merely wanted to highlight this fact). The arguments, in the various cases to be considered, will consist in choosing~$B$, describing~$\Gal(L/F)$, and embedding this group within~$\f_p[\U_2] \rtimes \U_2$. The homomorphism $\gamma $ will be the (pre-)composition of this embedding with the natural map~$G_F \to \Gal(L/F)$.

\medskip
  
We have essentially already dealt with (i): namely, Proposition~\ref{prop-from-Un-to-Un+1} for~$n=2$, with~$w= B= B_0$, gives us the result. Case (ii) is also easy. Here~$K=F$ and~$B=B_0= b$, so that~$L= F[\sq B]$. We have~$\Gal(L/F) \cong C_p$ (leaving aside the trivial case when~$b$ is also a~$p$-th power in~$F$), generated by an element~$\tau $ with~$\tau (\sq B) = \dz \sq B$, or in other words~$\chi_B(\tau )= 1$. We see~$\Gal(L/F)$ as a subgroup of~$\f_p[\U_2] \rtimes \U_2$ by mapping~$\tau $ to~$(1, 0, \ldots, 0)$. The compatibility can be verified easily. Finally, case (iii) is taken care of in~\cite[Prop.~2.3]{previous} .

\medskip

  We point out that, as far as Theorem~\ref{thm-main} below is concerned, we are done -- and after all, this is the main Theorem in the paper. The rest of the (rather long) proof is here to establish that (2) $\implies$ (4) in Proposition~\ref{prop-hilbert-symbol}, as promised, in absolutely all cases. We find the phenomenon of ``automatic realization'', which is taking place here, rather intriguing, so we provide all the details for completeness, even though this is quite a digression from Massey products.  

In the sequel, we are free to pick any~$B$ and ignore~$B_0$ -- its existence will not even be used. Indeed, outside of case (i) or (iii), the conditions (2) and (4) are both true, but for rather independent reasons.

\medskip

{\em The case when~$b$ is a~$p$-th power, but not~$a$}. Suppose~$b= B^p$ with~$B \in F$, so that~$b = \N_{K/F}(B)$. Then~$L= F[\sq a, \sq B]$. We have either~$\Gal(L/F) = C_p$ (consider then the subgroup of~$\U_3 ^{(1)}$ generated by~$\sigma $) or~$\Gal(L/F) = C_p \times C_p$ (use the subgroup generated by~$\sigma $ and~$(1, 1, \ldots, 1)$).

\medskip

{\em The case when~$a$ and~$b$ are colinear, but non-zero in~$\lin F$.} We will in fact write the proof in the case $a=b$, assuming that~$a$ is not a~$p$-th power in~$F$. The general case generates more notation, but is not fundamentally more complicated.  Recall that we assume that~$p$ is odd now. The first remark is that we may (and we do) take~$B= \sq a$: indeed 
\[ \N_{K/F}(\sq a) = \dz^{\frac{p(p-1)} {2}} \cdot a =  a = b \, .  \]
We have~$L= F[\sq{\dz}, \sqrt[p^2]{a}]$. We will let~$M = F[\sq{\dz}]$.
\begin{itemize}
\item {\em First subcase: $M= F$,} that is, we suppose first that~$F$ already had a primitive~$p^2$-th root of unity. Then~$L/F$ is cyclic, and its order is also the order of~$a$ in~$F^\times/ F^{\times p^2}$; if it were~$p$ (or~$1$), we would deduce that~$a$ is a~$p$-th power in~$F$, which it is not, by assumption. So~$\Gal(L/F) \cong C_{p^2}$. The element $(1, 0, \ldots, 0) \sigma  \in \f_p[\U_2] \rtimes \U_2$, where~$\sigma $ is a generator of~$\U_2 \cong C_p$, has order~$p^2$: indeed its~$p$-th power is~$(1, 1, \ldots, 1)$. The claim follows.

\item {\em Second subcase: } $a$ and $\dz$ are proportional in~$\lin F$, so in particular~$\sq{\dz} \not \in F$, and~$M =K \ne F$. In this case~$L= F[\sqrt[p^2]{\dz}] = F[\mu_{p^3}]$, and~$L/F$ is cyclotomic, with a cyclic Galois group. We remark that~$[L:F] = p^2$. Indeed, we have already pointed out that, for any~$x \in F^\times$, we have $\N_{F[\sq x]/F}(\sq x) = x$; as a result, if~$x$ is not a~$p$-th power in~$F$, then~$\sq x$ is not a~$p$-th power in~$F[\sq x]$, for we would get a contradiction upon taking norms down to~$F$. Applied to~$x= \dz$, we see that~$L \ne K$, and that~$[L:F]= p^2$ as claimed.

  The group~$\Gal(L/F)$ is generated by an element~$\tau $ with~$\tau (a^{1/p^2}) = \sq \dz \, a^{1/p^2}$, so~$\tau (\sq a) = \dz \sq a$, and thus~$\chi_a(\tau ) = 1$. We map~$\Gal(L/F)$ into $\f_p[\U_2] \rtimes \U_2$ by~$\tau \mapsto \tau ' = (1, 0,\ldots, 0) \sigma $. The element~$\tau '$ has order~$p^2$, and~$s_1(\tau ') = s_2(\tau') = 1$ as requested.

  \item {\em Third subcase: $\dz$ and~$a$ are linearly independent modulo $p$-th powers.} By Kummer theory, the element~$a$ is not a~$p$-th power in~$M$. It follows that the order of~$a$ in~$M^\times / M^{\times p^2}$ is not~$p$, and so~$\Gal(L/M)$ is cyclic of order~$p^2$. We have an exact sequence 
\[ 1 \longrightarrow \Gal(L/M)= C_{p^2} \longrightarrow \Gal(L/F) \longrightarrow \Gal(M/F) = C_p  \longrightarrow 1 \, .  \]
The extension~$F[\sqrt[p^2]{a}]/F$ is not normal, so~$\Gal(L/F)$ is not abelian. Just by checking which groups of order~$p^3$ exist (with~$p$ odd), we conclude that we must have $\Gal(L/F)= C_{p^2} \rtimes C_p$, and we note that there is only one such semidirect product, up to isomorphism.

Let~$\tau_1$ be an element of order~$p^2$, generating~$\Gal(L/M)$, and satisfying~$\tau_1(a^{1/p^2}) = \dz^{1/p} a^{1/p^2}$. In particular~$\tau_1(a^{1/p}) = \dz a^{1/p}$, and~$\chi_a(\tau_1) = 1$. We know that there exists an element~$\tau_2$  of order~$p$ in~$\Gal(L/F)$, restricting to a generator of~$\Gal(M/F)$. We can arrange to have~$\tau_2 \tau_1 \tau_2^{-1} = \tau_1^{p+1}$ (replacing~$\tau_2$ by a power of itself if necessary).

This relation, evaluated on~$a^{1/p^2}$, allows us to deduce that~$\tau_2(\dz^{1/p}) = \dz^{(p+1)/p}$, after a straightforward calculation. Then, using this, we let~$i$ be an integer such that~$\tau_2(a^{1/p^2}) = \dz^{i/p} a^{1/p^2}$, and compute that~$\tau_2^k(a^{1/p^2}) = \dz^{i m_k/p} a^{1/p^2}$ where 
\[ m_k = \frac{(p+1)^k - 1} {p} \, .   \]
In particular, the integer~$m_p$ is not divisible by~$p^2$. Since~$\tau_2$ must have order~$p$ nonetheless, we see that~$p$ divides~$i$. As a result, $\tau_2(a^{1/p}) = \dz^i a^{1/p} = a^{1/p}$. Thus~$\chi_a(\tau_2) = 0$.

Consider now the elements~$g= (1, 0, \ldots, 0) \sigma \in \f_p[\U_2] \rtimes \U_2$, and~$v= (0, 1, 2, \ldots, p-1) \in \f_p[\U_2]$. Then~$\sigma v \sigma^{-1}v^{-1} = (1, 1, \ldots, 1)$ and~$v^{-1} \sigma v = (2, 1, 1, \ldots, 1) \sigma = g^{p+1}$. It follows we have an embedding of~$\Gal(L/F)$ into $\f_p[\U_2] \rtimes \U_2$ with~$\tau_1 \mapsto g$, $\tau_2 \mapsto v^{-1}$. Here~$s_1(g) = s_2(g) = 1 = \chi_a(\tau_1)$, while~$s_1(v^{-1}) = s_2(v^{-1})= 0 = \chi_a(\tau_2)$, so the proof is complete.
\end{itemize}
\end{proof}

We have reached the main Theorem of this paper.

\begin{thm} \label{thm-main}
  Let~$F$ be a field with~$\dz \in F$, and let~$a, b, c, d \in F^\times$. Consider the following assertions.
  \begin{enumerate}
  \item There exists a Galois extension~$L/F$ with~$\Gal(L/F)$ identified with a subgroup of~$\tilde \U_5$, which is compatible with~$a, b, c, d$. In other words, there is a continuous homomorphism 
\[ \phi \colon G_F \longrightarrow  \tilde \U_5 \]
such that~$s_1 \circ \phi  = \chi_a$, $s_2 \circ \phi  = \chi_b$, $s_3 \circ \phi  = \chi_c$, $s_4 \circ \phi  = \chi_d$.

    \item One can find~$B \in F[\sq a]$ such that~$\N_{F[\sq a]/F}(B)= b f_1^p$ for some~$f_1 \in F^\times$, and~$C \in F[\sq d]$ such that~$\N_{F[\sq d]/F}(C)= c f_2^p$ for some~$f_2 \in F^\times$, with the property that for any~$\sigma \in \Gal(F[\sq a]/F)$ and any~$\tau \in \Gal(F[\sq d]/F)$, we have 
\[ (\sigma (B), \tau (C))_{F[\sq a, \sq d]} = 0 \, .   \]
  \end{enumerate}
  Then (1) $\implies$ (2). Moreover, we also have (2) $\implies$ (1) when~$p=2$, or when the following condition is satisfied: either~$a$ and $b$ are linearly independent in~$\lin F$,  or $a$ is a~$p$-th power, and likewise with~$d, c$ replacing~$a, b$ respectively. 
\end{thm}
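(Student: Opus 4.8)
The plan is to combine the group-theoretic Theorem~\ref{thm-maps-into-Un-tilde} with the Galois-theoretic dictionary of Propositions~\ref{prop-hilbert-symbol}, \ref{prop-4-implies-3} and~\ref{prop-2-implies-4}. Throughout write $K_a = F[\sq a]$, $K_d = F[\sq d]$ and $E = K_a K_d = F[\sq a, \sq d]$. Recall that $\tilde\U_5$ is an extension of $\tilde G = \U_3^{(1)} \times \U_3^{(2)}$, each factor being $\f_p[\U_2] \rtimes \U_2$, with kernel $\f_p[\U_2 \times \U_2] = \Ind_{\tilde N}^{\tilde G}(\f_p)$, where $\tilde N = \tilde N_1 \times \tilde N_2'$ and $\tilde N_1 = \f_p[\U_2] \subset \U_3^{(1)}$, $\tilde N_2' = \f_p[\U_2] \subset \U_3^{(2)}$. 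On $\tilde\U_5$ the characters $s_1, s_2$ are inflated from $\U_3^{(1)}$ -- where $s_1$ is the quotient onto $\U_2$, with kernel $\tilde N_1$, and $s_2$ is the augmentation character of $\f_p[\U_2]$ -- while, symmetrically, $s_3, s_4$ are inflated from $\U_3^{(2)}$, with $s_4$ now the quotient onto $\U_2$ (kernel $\tilde N_2'$) and $s_3$ the augmentation character. Thus a homomorphism $G_F \to \tilde G$ compatible with $a,b,c,d$ is the same thing as a pair $(\gamma_1, \gamma_2)$ with $\gamma_1 \colon G_F \to \U_3^{(1)}$ compatible, in the sense of Proposition~\ref{prop-hilbert-symbol}(4), with $(a,b)$, and $\gamma_2 \colon G_F \to \U_3^{(2)}$ satisfying $s_4 \circ \gamma_2 = \chi_d$, $s_3 \circ \gamma_2 = \chi_c$ -- i.e.\ compatible with $(d, c)$ after the relabelling $s_1 \leftrightarrow s_4$, $s_2 \leftrightarrow s_3$. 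For such a $\gamma = (\gamma_1, \gamma_2)$ one has $\gamma^{-1}(\tilde N) = \ker(\chi_a) \cap \ker(\chi_d) = G_E$, and for $x \in \h^1(\tilde N_1, \f_p)$ the class $\lambda^*(x)$, $\lambda := \gamma|_{G_E}$, factors through $\gamma_1|_{G_{K_a}}$.

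For (1) $\Rightarrow$ (2): starting from $\phi$, I would first project to $\gamma = (\gamma_1, \gamma_2) \colon G_F \to \tilde G$. Since $\gamma_1$ is as in Proposition~\ref{prop-hilbert-symbol}(4), Proposition~\ref{prop-4-implies-3} produces $B \in K_a^\times$ with $\N_{K_a/F}(B) = b f_1^p$ for some $f_1 \in F^\times$ and with $\chi_B$ in the image of $\gamma_1^* \colon \h^1(\tilde N_1, \f_p) \to \h^1(K_a, \f_p)$; as $\gamma_1^*$ is equivariant for the $\Gal(K_a/F) \cong \U_2$-action defined by $\chi_a$, its image is a $\Gal(K_a/F)$-submodule, hence contains $\chi_{\sigma(B)}$ for all $\sigma$. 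Applying the same to $\gamma_2$, with $(d,c)$ in the role of $(a,b)$, gives $C \in K_d^\times$ with $\N_{K_d/F}(C) = c f_2^p$ and $\chi_{\tau(C)}$ in the image of $\gamma_2^* \colon \h^1(\tilde N_2', \f_p) \to \h^1(K_d, \f_p)$ for all $\tau$. Now, since $\phi$ lifts $\gamma$, Theorem~\ref{thm-maps-into-Un-tilde} yields condition~(2) there: the images of $\h^1(\tilde N_1, \f_p)$ and $\h^1(\tilde N_2', \f_p)$ in $\h^1(E, \f_p)$ under $\lambda^*$ are orthogonal for the cup-product. Restricting the classes above to $E$, these images contain the restrictions of all $\chi_{\sigma(B)}$, resp.\ all $\chi_{\tau(C)}$, so $(\sigma(B), \tau(C))_E = \chi_{\sigma(B)} \cupp \chi_{\tau(C)} = 0$ for all $\sigma \in \Gal(K_a/F)$ and $\tau \in \Gal(K_d/F)$, which is~(2).

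For (2) $\Rightarrow$ (1) under the stated hypothesis: I would set $b' = \N_{K_a/F}(B)$ and $c' = \N_{K_d/F}(C)$, so that $\chi_{b'} = \chi_b$ and $\chi_{c'} = \chi_c$ (the norms equal $b$, $c$ up to $p$-th powers), and observe that the hypothesis of the Theorem says exactly that $(a, b')$ and $(d, c')$ each lie in one of the favourable cases (i)--(iii) of Proposition~\ref{prop-2-implies-4} (linear independence in $\lin F$; or a $p$-th power; or $p = 2$). Applying that Proposition to $(a, b')$ with starting element $B$ yields $\gamma_1 \colon G_F \to \U_3^{(1)}$ compatible with $a$ and $b'$ (hence, as $\chi_{b'} = \chi_b$, with $a$ and $b$), with $\gamma_1^{-1}(\tilde N_1) = G_{K_a}$ and with the image of $\gamma_1^* \colon \h^1(\tilde N_1, \f_p) \to \h^1(K_a, \f_p)$ equal to the $\Gal(K_a/F)$-submodule generated by $\chi_B$; applying it to $(d, c')$, reading $s_4, s_3$ for $s_1, s_2$, yields $\gamma_2 \colon G_F \to \U_3^{(2)}$ with $s_4 \circ \gamma_2 = \chi_d$, $s_3 \circ \gamma_2 = \chi_c$, $\gamma_2^{-1}(\tilde N_2') = G_{K_d}$, and image of $\gamma_2^*$ the $\Gal(K_d/F)$-submodule generated by $\chi_C$. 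Then $\gamma = (\gamma_1, \gamma_2)$ has $s_i \circ \gamma = \chi_a, \chi_b, \chi_c, \chi_d$. To lift $\gamma$ to $\phi \colon G_F \to \tilde\U_5$ -- which is then automatically compatible with $a,b,c,d$ -- I would verify condition~(2) of Theorem~\ref{thm-maps-into-Un-tilde}: with $\Lambda = G_E$ and $\lambda = \gamma|_\Lambda$, the space $\lambda^*(\h^1(\tilde N_1, \f_p))$ is the restriction to $E$ of the $\Gal(K_a/F)$-module generated by $\chi_B$, hence is $\f_p$-spanned by the $\chi_{\sigma(B)}|_E$, and similarly $\lambda^*(\h^1(\tilde N_2', \f_p))$ is spanned by the $\chi_{\tau(C)}|_E$; by bilinearity, orthogonality is equivalent to $(\sigma(B), \tau(C))_E = 0$ for all $\sigma, \tau$, which is hypothesis~(2).

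The main obstacle is not a single computation but the bookkeeping that glues the two directions together: one must keep careful track of which distinguished character $s_i$ of $\tilde\U_5$ lives on which wreath factor and whether it is the ``quotient onto $\U_2$'' or the ``augmentation'' character -- in particular the reversal whereby $s_3, s_4$ correspond to $c, d$ and it is $s_4$, not $s_3$, that plays on $\U_3^{(2)}$ the role $s_1$ plays on $\U_3^{(1)}$. The second delicate point, which is what actually does the work, is the passage between the quantifier ``for all $\sigma \in \Gal(K_a/F)$, $\tau \in \Gal(K_d/F)$'' in statement~(2) and the single orthogonality condition of Theorem~\ref{thm-maps-into-Un-tilde}(2): for (1) $\Rightarrow$ (2) this rests on the equivariance of $\gamma_1^*$, so that $\chi_B$ in the image drags along every $\chi_{\sigma(B)}$; for (2) $\Rightarrow$ (1) it rests on the sharper output of Proposition~\ref{prop-2-implies-4}, namely that the image of $\gamma_1^*$ is the \emph{full} cyclic $\Gal(K_a/F)$-module on $\chi_B$ (and likewise for $\chi_C$). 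Once these identifications are in place, the proof is essentially a dictionary lookup.
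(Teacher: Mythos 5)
Your argument is correct and follows essentially the same route as the paper: project~$\phi$ to~$\gamma = (\gamma_1,\gamma_2)$ and run Theorem~\ref{thm-maps-into-Un-tilde} in tandem with Propositions~\ref{prop-4-implies-3} and~\ref{prop-2-implies-4}, using the equivariance of~$\gamma_1^*$ (resp.\ the sharper ``image equals the cyclic $\Gal$-module on~$\chi_B$'' output of Proposition~\ref{prop-2-implies-4}) to pass between the single orthogonality condition of Theorem~\ref{thm-maps-into-Un-tilde}(2) and the ``for all~$\sigma,\tau$'' phrasing of condition~(2). You are also more explicit than the paper about the relabelling of the distinguished characters on the second factor ($s_4$ playing the role of~$s_1$, $s_3$ that of~$s_2$), which is a detail the paper leaves implicit.
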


\begin{proof}
  Suppose (1) holds, and let~$\gamma \colon G_F \to \tilde G$ denote the composition of~$\phi $ with the projection~$\tilde \U_5 \to \tilde G $. We write~$\gamma = \gamma_1 \times \gamma_2$, recalling that~$\tilde G = \U_3 ^{(1)} \times \U_3 ^{(2)}$.

  Apply Proposition~\ref{prop-4-implies-3} to~$\gamma_1$ first. This defines~$B \in F[\sq a]$, with appropriate norm, and~$B$ is in the image of 
\[ \gamma_1^* \colon \h^1(\f_p[\U_2], \f_p) \longrightarrow \lin {F[\sq a]} \cong \h^1(F[\sq a], \f_p) \, .   \]
It follows that the same can be said of all the~$\Gal(F[\sq a]/F)$-conjugates of~$B$. A similar discussion applies, when~$c, d, C, \gamma_2$ replace~$b, a, B, \gamma_1$ respectively.

Now apply Theorem~\ref{thm-maps-into-Un-tilde} to~$\gamma $. The existence of the lift $\phi $ is condition (1) from this Theorem, and we draw that the conjugates of~$B$ are orthogonal, for the cup-product, to all the conjugates of $C$. We have proved (2).

We turn to the converse, and so we assume that (2) holds, and that~$a, b, c, d$ satisfy the assumption of the Theorem (or that~$p=2$). Apply Proposition~\ref{prop-2-implies-4} to~$a$ and~$bf_1^p$, noting that we are in one of the three favourable cases: the existence of~$B$ shows that we can find a~$\gamma_1 \colon G_F \to  \U_3 ^{(1)}$ compatible with~$a$ and~$bf_1^p$, which of course is the same as saying that~$\gamma_1$ is compatible with~$a$ and~$b$. Moreover, the group~$\gamma_1^*(\h^1(\f_p[\U_2], \f_p))$ is the $\Gal(F[\sq a]/F)$-module spanned by~$B$, by the same Proposition. A similar discussion applies with~$a, b, B$ replaced by~$d, c, C$, so we have the existence of a~$\gamma_2 \colon G_F \to \U_3 ^{(2)}$ compatible with~$c, d$.

Now Theorem~\ref{thm-maps-into-Un-tilde} applied to~$\gamma = \gamma_1 \times \gamma_2$ shows the existence of a lift~$G_F \to \tilde U_5$ as in (1).
\end{proof}

We remark that, as~$B \in F[\sq a]$ and~$C \in F[\sq d]$ in this statement, condition (2) could be expressed as 
\[ (\sigma (B), \tau (C))_E = 0 \quad\textnormal{for all}~\sigma, \tau \in \Gal(E/F) \, ,   \]
where~$E= F[\sq a, \sq d]$. What is more, when~$a$ and~$d$ are linearly independent in~$\lin F$, then~$\Gal(E/F)$ splits off as a direct product, and the condition simplifies first to 
\[ (\rho  (B), \rho  (C))_E = 0 \quad\textnormal{for all}~ \rho  \in \Gal(E/F) \]
(take~$\rho = (\sigma, \tau )$ for~$\sigma \in \Gal(F[\sq a]/F)$ and $\tau \in \Gal(F[\sq d]/F))$. In turn, since the group~$\Gal(E/F)$ acts on~$\h^2(E, \f_p)$, this is really reduced to 
\[ (B, C)_E = 0 \, .   \]
Thus in good cases, the vanishing of a four-fold Massey product is controlled by the vanishing of a single cup-product.

\begin{coro}
Let~$a, b, c, d$ be as in the Theorem, and suppose they satisfy condition~(2). Then the Massey product 
\[ \langle a, a, \ldots, a, b, c, d, d, \ldots, d \rangle \, ,   \]
where~$a$ and~$d$ are repeated less than~$p$ times, vanishes.
\end{coro}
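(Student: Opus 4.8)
The plan is to obtain this as an immediate consequence of Theorem~\ref{thm-main} and Corollary~\ref{coro-repeated-massey}.

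First I would invoke Theorem~\ref{thm-main}. Condition~(2) in the present statement is verbatim assertion~(2) of that Theorem, and since $a, b, c, d$ are taken as in the Theorem --- so that the implication (2)$\implies$(1) is available, namely $p = 2$, or $a$ and $b$ are linearly independent in~$\lin F$ (resp.\ $a$ is a $p$-th power) and likewise for $d, c$ --- we obtain a continuous homomorphism $\phi \colon G_F \to \tilde\U_5$ with $s_1 \circ \phi = \chi_a$, $s_2 \circ \phi = \chi_b$, $s_3 \circ \phi = \chi_c$ and $s_4 \circ \phi = \chi_d$. By the terminology fixed just before Corollary~\ref{coro-repeated-massey}, this says exactly that the Massey product of $\chi_a, \chi_b, \chi_c, \chi_d$ vanishes in the sense of~$\tilde\U_5$.

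Then I would apply Corollary~\ref{coro-repeated-massey} with $\Gamma = G_F$ and $(\chi_1, \chi_2, \chi_3, \chi_4) = (\chi_a, \chi_b, \chi_c, \chi_d)$. This yields at once the vanishing of
\[
\langle \chi_a, \ldots, \chi_a, \chi_b, \chi_c, \chi_d, \ldots, \chi_d \rangle
= \langle a, \ldots, a, b, c, d, \ldots, d \rangle ,
\]
with $\chi_a$ and $\chi_d$ repeated less than $p$ times, which is the assertion. (Recalling the mechanism of Corollary~\ref{coro-repeated-massey}: one composes $\phi$ with the embedding $\tilde\U_5 \hookrightarrow \U_{2p+1}$ of Proposition~\ref{prop-explicit-U5} to get a homomorphism compatible with the list in which $a$ and $d$ occur exactly $p - 1$ times, and then composes with the truncations $\pi$ or $\pi'$ to lower the numbers of repetitions as far as one wishes.)

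Since both ingredients are already established, I do not expect a genuine obstacle. The only point deserving a little care is the bookkeeping identifying assertion~(1) of Theorem~\ref{thm-main} with the hypothesis ``vanishes in the sense of~$\tilde\U_5$'' of Corollary~\ref{coro-repeated-massey}, together with the observation that it is exactly --- and only --- the implication (2)$\implies$(1), hence the mild genericity hypothesis, that enters here.
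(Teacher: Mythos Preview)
Your proposal is correct and matches the paper's own proof exactly: the paper simply writes ``Theorem~\ref{thm-main} and Corollary~\ref{coro-repeated-massey}'', and you have spelled out precisely that chain of implications, including the identification of assertion~(1) of Theorem~\ref{thm-main} with the hypothesis of Corollary~\ref{coro-repeated-massey}.
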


\begin{proof}
Theorem~\ref{thm-main} and Corollary~\ref{coro-repeated-massey}. 
\end{proof}

\section{Local fields} \label{sec-local-fields}

We proceed to describe a special situation when the (equivalent) conditions of Theorem~\ref{thm-main} hold, namely in the case of local fields with enough roots of unity. More precisely, we establish:

\begin{prop} \label{prop-local-fields}
Let~$p$ be a prime number, and let~$F$ be a local field which contains a primitive~$p$-th root of unity~$\dz$. Let~$a, b, c, d \in F^\times$ satisfy
\[ (a,b)_F= (b,c)_F = (c,d)_F = 0 \, .   \]
Finally, if~$[a], [b], [c]$ and $[d]$ span a~$1$-dimensional subspace in~$\lin F$, and if~$p>2$, then we assume further that $(a, \dz)_F = 0$ (this is automatic if~$F$ contains the~$p^2$-th roots of unity).

Then we can find~$B \in F[\sq a]$ such that~$\N_{F[\sq a]/F}(B)= b f_1^p$ for some~$f_1 \in F^\times$, and~$C \in F[\sq d]$ such that~$\N_{F[\sq d]/F}(C)= c f_2^p$ for some~$f_2 \in F$, with the property that for any~$\sigma \in \Gal(F[\sq a]/F)$ and any~$\tau \in \Gal(F[\sq d]/F)$, we have 
\[ (\sigma (B), \tau (C))_{F[\sq a, \sq d]} = 0 \, .   \]
In other words, condition (2) from Theorem~\ref{thm-main} is satisfied. 
\end{prop}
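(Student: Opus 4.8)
The plan is to exhibit explicit $B$ and $C$ and reduce the vanishing of all the cup-products $(\sigma(B),\tau(C))_E$, where $E=F[\sq a,\sq d]$, to the single hypothesis $(b,c)_F=0$. Two facts about the local field $F$ will do the heavy lifting: the norm residue symbol is surjective onto the relevant subspace, so $(a,b)_F=0$ produces $B_0\in F[\sq a]$ with $\N_{F[\sq a]/F}(B_0)=b$, and $(c,d)_F=0$ produces $C_0\in F[\sq d]$ with $\N_{F[\sq d]/F}(C_0)=c$; and $\cores_{E/F}\colon\h^2(E,\f_p)\to\h^2(F,\f_p)$ is injective for every finite extension of local fields (both groups are $\cong\f_p$ and $\operatorname{inv}_F\circ\cores_{E/F}=\operatorname{inv}_E$). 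I will also use throughout that $\Gal(E/F)$ acts trivially on $\h^2(E,\f_p)$, so the whole system $\{(\sigma(B),\tau(C))_E=0\}$ depends on $B,C$ only through the $\f_p[\Gal(E/F)]$-submodules they span in $\h^1(E,\f_p)\cong E^\times/E^{\times p}$. For $p=2$ the statement is essentially contained in \cite{previous}, so I assume $p$ odd from now on.

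Case A, when $[a],[d]$ are linearly independent in $\lin F$ or one of them is a $p$-th power: here $\Gal(E/F)=\Gal(F[\sq a]/F)\times\Gal(F[\sq d]/F)$, and triviality of the $\Gal$-action on $\h^2(E,\f_p)$ collapses the whole system to the single equation $(B,C)_E=0$. I take $B=B_0$, $C=C_0$. The projection formula for $\cores_{E/F[\sq a]}$ (the Mackey double-coset sum is trivial since $F[\sq a]$ and $F[\sq d]$ are linearly disjoint over $F$ inside $E$) rewrites $(B_0,C_0)_E$ as $(B_0,c)_{F[\sq a]}$; then $\cores_{F[\sq a]/F}$ turns this into $(\N_{F[\sq a]/F}(B_0),c)_F=(b,c)_F=0$, and injectivity of $\cores_{E/F}$ finishes the case. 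When $a$ or $d$ is a $p$-th power one of the two norm steps is vacuous and the argument is unchanged.

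Case B, when $[a],[d]$ are both nonzero and colinear: then $K:=F[\sq a]=F[\sq d]=E$ and $\Gal(E/F)=\langle\sigma\rangle\cong C_p$. Put $V=K^\times/K^{\times p}=\h^1(K,\f_p)$ with its perfect, $\Gal$-invariant cup-product pairing, let $R=\f_p[\Gal(K/F)]$, and recall from local Tate duality that $\ker(\nu\colon V\to F^\times/F^{\times p})=(\operatorname{im}\operatorname{res}_{F/K})^\perp$, where $\nu$ is induced by $\N_{K/F}$. As $(a,b)_F=0$ and, since $d,a$ are colinear, $(a,c)_F=0$, there are $B_0,C_0\in V$ with $\nu(B_0)=[b]$, $\nu(C_0)=[c]$, and the requirement becomes $RB\perp RC$ for suitable representatives. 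The mechanism is: since $(RC)^\perp$ is an $R$-submodule and $\ker\nu=(\operatorname{im}\operatorname{res}_{F/K})^\perp$, one can modify $B_0$ inside its fibre $B_0+\ker\nu$ to land in $(RC)^\perp$ exactly when $B_0\perp(RC\cap\operatorname{im}\operatorname{res}_{F/K})$; moreover $\operatorname{res}_{F/K}[c]=\sum_{g}g\cdot C_0$ always lies in $RC\cap\operatorname{im}\operatorname{res}_{F/K}$, and $B_0\perp\operatorname{res}_{F/K}[c]$ is, by adjointness of $\cores$ and $\operatorname{res}$, equivalent to $(b,c)_F=0$. So it is enough to pick $C$ with $RC\cap\operatorname{im}\operatorname{res}_{F/K}$ contained in the line spanned by $\operatorname{res}_{F/K}[c]$. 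If $c$ is not colinear with $a$ then $\operatorname{res}_{F/K}[c]=\sum_g g\cdot C_0\neq 0$, so by Lemma~\ref{lem-usual-lemma-rep-theory} $RC_0$ is free over $R$ and $RC_0\cap\ker(\sigma-1)$ is precisely that line; hence $C=C_0$ and an appropriate $B$ work. By symmetry ($B\leftrightarrow C$, $b\leftrightarrow c$) the sub-case where $b$ is not colinear with $a$ is also handled.

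This leaves the sub-case where $[a],[b],[c],[d]$ span a single line, precisely where we are granted $(a,\dz)_F=0$. Writing $b\equiv a^\beta$, $c\equiv a^\gamma$ modulo $p$-th powers, I would take $B=(\sq a)^\beta$ and $C=(\sq a)^\gamma$; since $p$ is odd, $\N_{K/F}(\sq a)=a$, so these have the required norms. Because $(\sigma-1)[\sq a]=[\dz]$ and $(\sigma-1)[\dz]=0$, the modules $RB,RC$ sit inside $R[\sq a]=\langle[\sq a],[\dz]\rangle$, so $RB\perp RC$ reduces to the vanishing of $(\sq a,\sq a)_K$, $(\sq a,\dz)_K$ and $(\dz,\dz)_K$: the first and third vanish because the mod-$p$ symbol is alternating for $p$ odd, and $(\sq a,\dz)_K$ corestricts to $(a,\dz)_F=0$, hence vanishes by injectivity of $\cores_{K/F}$. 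The main obstacle is Case B as a whole: there the conditions genuinely do not reduce to one cup-product, and one must analyse the $\f_p[C_p]$-module $K^\times/K^{\times p}$ together with its self-duality under the cup-product pairing and the image of $\operatorname{res}_{F/K}$; it is exactly in the one-dimensional sub-case that the extra hypothesis $(a,\dz)_F=0$ is forced upon us (it is what makes $\dz$ a norm from $K$, equivalently what kills $(\sq a,\dz)_K$). All the rest is bookkeeping with the corestriction formalism.
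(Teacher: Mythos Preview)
Your proof is correct and follows the same case division as the paper (non-degenerate / one of $a,d$ a $p$-th power / $a,d$ colinear and nonzero, the last splitting according to whether $[b],[c]$ stay independent of $[a]$ in $\lin F$). Case~A and the ``all-colinear'' sub-case are handled identically to the paper.

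The one genuine difference is the packaging of the hard sub-case of Case~B (say $[c]$ not colinear with $[a]$). The paper works on the smaller space $\rad(M)=(\sigma-1)M$, proves $\rad(M)^\perp=M^{C_p}$ by a direct bilinear-form argument, and then shows by hand that the $p$ linear forms $\ell_i(x)=(x,\sigma^i C)$ on $\rad(M)$ have a single relation $\sum_i\ell_i=0$; this lets one solve for $r\in\rad(M)$ with $(B_0 r,\sigma^i C)=0$ for $1\le i<p$, the last equation coming for free from $(b,c)_F=0$. You instead allow modification of $B_0$ by all of $\ker\nu$, invoke local Tate duality once to get $\ker\nu=(\operatorname{im}\operatorname{res})^\perp$, and reduce the existence of a good $B$ to the single orthogonality $B_0\perp RC\cap\operatorname{im}\operatorname{res}$; then freeness of $RC_0$ forces $RC_0\cap V^{C_p}=\langle\operatorname{res}[c]\rangle$, and $(B_0,\operatorname{res}[c])_K=(b,c)_F=0$ finishes. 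Both arguments rest on the same two ingredients (freeness of $RC_0$ via Lemma~\ref{lem-usual-lemma-rep-theory}, and the corestriction identity $(b,c)_F=0$), but your use of the adjunction $\ker\nu=(\operatorname{im}\operatorname{res})^\perp$ replaces the paper's explicit linear-algebra on $\rad(M)^*$ and is a bit more conceptual. A minor quibble: what you call a ``Mackey double-coset sum'' in Case~A is really just the projection formula together with the identification $\N_{E/F[\sq a]}\!\big|_{F[\sq d]}=\N_{F[\sq d]/F}$ coming from linear disjointness; the content is correct but the name is a slight misnomer.
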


The rest of this section is dedicated to the proof. We shall also give a counter-example, showing that the hypothesis~$(a, \dz) = 0$ is necessary, in the case we indicate.

Let us recall a very useful observation: whenever~$E/F$ is an extension of local fields, with~$\dz \in F$, then the corestriction $\h^2(E, \f_p) \to \h^2(F, \f_p)$ is an isomorphism. Indeed, this follows from the more precise fact that, if we write~$\operatorname{Inv}_F$ for the Hasse isomorphism  of~$F$,
\[ \operatorname{Inv}_F \colon \h^2(F, \mathbb{G}_m) \longrightarrow \Q/\Z \, ,   \]
then~$\operatorname{Inv}_F(\cores(x)) = \operatorname{Inv}_E(x)$ for all~$x \in \h^2(E, \mathbb{G}_m)$. See for example~\cite[Chap.~XI, \S2, Prop.\ 1]{corps-locaux}.  (Recall that~$\h^2(F, \f_p)$ is the~$p$-torsion in~$\h^2(F, \mathbb{G}_m)$.) Together with Tate duality, these are the only facts we require of a ``local field''.

The proof of Proposition~\ref{prop-local-fields} itself is divided in cases. Common notation, however, includes~$E:= F[\sq a, \sq d]$, and~$\sigma $ and~$\tau $ always denote elements of~$\Gal(E/F)$. We speak of~$\lin F$ and~$\lin E$ additively, and write~$[f]_F$ for the class of~$f \in F^\times$ in~$\lin F$, or~$[f]$ when there is no risk of confusion; likewise for the notation~$[e]_E$.

The case~$p=2$ has already been dealt with elsewhere. Namely, from~\cite[Prop.~4.1]{localmasseycup}, we know that the conditions~$(a,b) = (b,c) = (c,d)=0$ are enough to imply the vanishing of~$\langle a, b, c, d \rangle$ whenever~$F$ is a local field (and for any~$p$). However, when~$p=2$, we have~$\tilde \U_5 = \U_5$, and so we have condition (1) of Theorem~\ref{thm-main}. By that Theorem, we have also (2). In the sequel, we assume that~$p>2$. 

\bigskip

{\em The non-degenerate case.} Let us assume for a start that~$[E:F] = p^2$. Pick any~$B, C$ such that~$\N_{F[\sq a]/F}(B) = b$, $\N_{F[\sq d]/F}(C)=c$, which is possible since~$(a,b) = (c,d) = 0$ (see Proposition~\ref{prop-hilbert-symbol}). We simply write that
\begin{align*}
  \cores_{E/F} (\sigma (B), \tau (C))_E & = \cores_{F[\sq a]/F} \circ \cores_{E/F[\sq a]}(\sigma (B), \tau (C))_E \\
  & = \cores_{F[\sq a]/F} (B, c)_{F[\sq a]} \\
  & = (b,c)_F = 0\, , 
\end{align*}
using the projection formula twice. Since the corestriction is injective, we have $(\sigma (B), \tau (C))_E = 0$, as we wanted.

\bigskip

{\em The case when one of~$[a]$ or~$[d]$ is~$0$}, say~$[a]_F= 0$. We take~$B= b$. If~$d$ is not a~$p$-th power, we see that~$(b, \tau (C))_E = 0$ since its corestriction is~$(b,c)_F= 0$. If, on the other hand, $d$ is also a~$p$-th power, we take~$C = c$ and there is nothing to check.

\bigskip

We continue the proof assuming that~$[a]_F$ and~$[d]_F$ are colinear but nonzero, so~$[E:F]= p$. We further subdivide into two cases.

\bigskip

{\em The case when~$[b]_E = [c]_E = 0$.} By Kummer theory, we see that~$[b]_F$ is in the span of~$[a]_F$ (or~$[d]_F$, of course), say~$[b]_F = i [a]_F$, and likewise~$[c]_F = j [a]_F$. In other words, we are in the case~$a, a^i, a^j, a^k$. Let~$\alpha = \sq a$ and~$B= \alpha ^i$, $C= \alpha ^j$. The norms of~$B$ and~$C$ are appropriate, and we have~$(B, C)_E = 0$ since~$B$ and~$C$ are colinear (and we assume that~$p$ is odd). If~$\sigma, \tau \in \Gal(E/F)$, then~$\sigma [B] = [B] + r[\dz]$, and~$\tau [C] = [C] + s [\dz]$, for some integers~$r, s$. To show~$(\sigma (B), \tau (C))_E = 0$, it suffices to establish that~$(\dz, C)_E = 0 = (B, \dz)_E$. In turn, the projection formula gives~$\cores_{E/F} (\dz, C) = (\dz, c)_F = j (\dz, a)_F = 0$ by assumption; since~$\cores$ is injective, we draw~$(\dz, C)_E = 0$, and~$(B, \dz) = 0$ is obtained in a similar fashion.

\bigskip

So we can finally turn to the most difficult case, when one of~$[b]_E$ or~$[c]_E$ is nonzero, say~$[c]_E \ne 0$. We keep assuming that~$[a]_F$ and~$[d]_F$ are colinear and nonzero. We pick some initial elements~$B, C \in E$ such that~$\N_{E/F}(B) = b $ and~$\N_{E/F} (C)= c$.

Some observations are in order. First, $\Gal(E/F) = C_p$, generated by~$\sigma $ (the letter loses its general meaning here), and we see~$M=\lin E$ as a~$C_p$-module. It is equipped with the cup-product, a non-degenerate bilinear form; note that it is~$C_p$-invariant in the sense $(\sigma (x), \sigma (y)) = (x, y)$ for~$x, y \in E^\times$ (indeed $C_p$ can only act trivially on $\h^2(E, \f_p) \cong \f_p$).

We call~$\rad(M)$ the image of~$\sigma - id$ (this is actually the radical in the sense of representation theory.) Note that the norm $\N_{E/F} \colon \lin E \to \lin F$ is zero on $\rad(M)$. A simple claim is then: {\em the orthogonal $\rad(M)^\perp$ is the module of fixed points $M^{C_p}$.}

{\em Proof:} If~$x$ is fixed by~$C_p$, then~$(x, \sigma (y) - y) = (x, \sigma (y)) - (x, y) = (\sigma^{-1}(x), y) - (x, y) = (x, y) - (x, y) = 0$. Conversely, if~$x$ is orthogonal to~$\rad(M)$, then~$(\sigma^{-1}(x)-x, y) = 0$ for all~$y$, by the same calculation. Since the form is non-degenerate, this implies~$\sigma^{-1}(x) = x$. The claim is proved.

Next, we observe that the condition~$[c]_E = \sum_i \sigma^i [C]_E \ne 0$ implies, by Lemma~\ref{lem-usual-lemma-rep-theory}, that the~$C_p$-module~$V$ generated by~$[C]$ is free.

Now, for~$1 \le i \le  p$ consider the linear form~$\ell_i$ on~$\rad(M)$ defined by~$\ell_i(x) = (x, \sigma^i(C))$. We claim that the span of the~$\ell_i$'s in~$\rad(M)^*$ has dimension~$p-1$, with relation~$\sum_i \ell_i = 0$. Indeed, if~$\sum \lambda_i \ell_i = 0$, then~$\sum \lambda_i \sigma^i[C]$ is orthogonal to~$\rad(M)$, and so by the previous claim, it is fixed by~$C_p$. As~$V$ is free, this happens if and only if ~$\lambda_1= \lambda_2= \cdots = \lambda_{p}$.

So, the forms~$\ell_1, \ldots , \ell_{p-1}$ being linearly independent, whatever the prescribed values~$\alpha_i \in \f_p$, we can find~$r \in \rad(M)$ with~$\ell_i(r) = \alpha_i$ for~$1 \le i < p$. Take~$\alpha_i = - (B, \sigma^i(C))$, so that 
\[ (rB, \sigma^i(C)) = \ell_i(r) + (B, \sigma^i(C)) = 0 \, .    \]
We replace~$B$ by~$B' = rB$, so that we still have~$\N_{E/F}[B']_E = [b]_F$, but now~$[B']$  is orthogonal to~$\sigma^i[C]$ when~$i$ is not~$0$ mod~$p$. However, the element~$[B']$ is also orthogonal to~$\sum_i \sigma^i [C]$, since 
\[ ( [B'], \sum_{i=0}^{p-1} \sigma^i [C] )_E = (B', c)_E \, ,   \]
while $\cores (B', c)_E= (b,c)_F = 0$, so~$(B', c)_E = 0$. We deduce~$(B', C) = 0$, so $(B', \sigma^i(C))=0$ with no restriction on~$i$. By the~$C_p$-invariance of the bilinear form, we do have~$(\sigma^i(B'), \sigma^j(C)) = 0$ for all~$i, j$, and we are done.

\bigskip

This concludes the proof of Proposition~\ref{prop-local-fields}.

\begin{rmk}
Here we show the necessity of having~$(a, \dz)=0$ when~$a, b, c, d$ are all colinear modulo~$p$-th powers.. Suppose that~$\dz \in F$ but that~$F$ does not contain a~$p^2$-th root of unity, where~$F$ is an~$\ell$-adic field for some~$\ell \ne p$. Select~$a \in F^\times$ so that~$\lin F$ is spanned by~$[a]$ and~$[\dz]$, and consider~$b=c=d=a$. Assuming~$p$ is odd, we certainly have~$(a,b) = (b,c) = (c,d)= 0$, but~$(a, \dz ) \ne 0$ by Tate duality.  We put~$E= F[\sq a]$ and~$\alpha = \sq a \in E$.

Now~$\cores(\alpha , \dz)_E = (a, \dz)_F \ne 0$, so we see that~$(\alpha , \dz) \ne 0$; say we have arranged to have~$(\alpha , \dz)= 1$ by choosing another root of unity, if necessary (this simplifies the calculations). Here we have used that~$\N_{E/F}(\alpha ) = a $ (this is for~$p$ odd).  Since~$\lin E$ is also of dimension~$2$, it must be spanned by~$[\alpha ]$ and~$[\dz]$, as these two classes are not orthogonal and thus not colinear.

We observe further that any~$B$ such that~$\N_{E/F}(B) = a$ up to~$p$-th powers is of the form~$[B] = [\alpha ] + \lambda  [\dz]$ with~$\lambda \in \f_p$. Likewise, if~$\N_{E/F}(C)= a$ up to~$p$-th powers, we have also~$[C] = [\alpha ] + \mu [\dz]$.

Now, there is~$\sigma \in \Gal(E/F)$ such that~$\sigma (\alpha ) = \alpha \dz$ , so~$\sigma [\alpha ] = [\alpha ] + [\dz]$ in additive notation. Thus~$\sigma^i[B] = [\alpha ] + (\lambda+i) [\dz]$ and~$\sigma^j[C] = [\alpha ] + (\mu + j) [\dz]$. We compute 
\[ (\sigma^i(B), \sigma^j(C)) = \lambda + \mu + i + j \, .   \]
Clearly, for some values of~$i$ and~$j$ this will be~$\ne 0$, whatever the initial choices of~$\lambda $ and~$\mu $. So~$B$ and~$C$, as in Proposition~\ref{prop-local-fields},  cannot be found.

\end{rmk}

\section{Splitting varieties}

We proceed to translate condition (2) in Theorem~\ref{thm-main} into polynomial equations. We continue to work with a field~$F$ with~$\dz \in F$, where~$p$ is a prime number, and we have distinguished elements~$a, b, c, d \in F^\times$.  It will be helpful to introduce 
\[ \E = F[X, Y]/(X^p - a, Y^p - d) \, , \]
as a substitute for~$E$, which in the previous section denoted~$F[\sq a, \sq d]$. (The script letter is a reminder that~$\E$ might not be a field.) Also, we put~$F_a = F[X]/(X^p - a)$ and view it as a subalgebra of~$\E$, and likewise for~$F_d = F[Y]/(Y^p - d)$. Finally, we pick~$p$-th roots~$\sq a$ and~$\sq d$ in some fixed algebraic closure of~$F$.

We will write~$\mu_p = \mu_p(F)$ for the group of~$p$-th roots of unity in~$F^\times$, generated by~$\dz$. 

\begin{prop} \label{prop-translation-splitting-variety}
  Let~$B \in F_a$ and~$C \in F_d$. The following conditions are equivalent.
  \begin{enumerate}
\item There exist elements $x_0, \ldots, x_{p-1} \in \E$ such that the following identity holds in the algebra~$\E[R]/(R^p - B)$ : 
\[ \prod_{\omega \in \mu_p} (x_0  + x_1 \omega R + x_2 \omega^2 R^2 + \cdots + x_{p-1} \omega^{p-1} R^{p-1}) = C \, .   \]

\item For any~$F$-homomorphism $\theta \colon \E \to K$, where~$K/F$ is an extension of fields, there exist elements $x_0, \ldots, x_{p-1} \in K$ such that the following identity holds in~$K[R]/(R^p - \theta (B))$ : 
\[ \prod_{\omega \in \mu_p} (x_0  + x_1 \omega R + x_2 \omega^2 R^2 + \cdots + x_{p-1} \omega^{p-1} R^{p-1}) = \theta (C) \, .   \]

\item For any~$F$-homomorphism $\theta \colon \E \to K$, where~$K/F$ is an extension of fields, we have 
\[ (\theta (B), \theta (C))_K = 0 \, .   \]

\item Put~$E= F[\sq a, \sq d]$. Then for any~$\sigma, \tau \in \Gal(E/F)$, we have 
\[ (\sigma (B), \tau  (C))_E = 0 \, ,   \]
where we view~$B$ and~$C$ as elements of~$E$ by substituting~$\sq a$ for~$X$, and~$\sq d$ for~$Y$.
\end{enumerate}
\end{prop}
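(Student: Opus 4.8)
The plan is to prove the chain of implications $(1) \Rightarrow (2) \Rightarrow (3) \Rightarrow (4)$ together with $(4) \Rightarrow (1)$ (or, alternatively, $(3) \Rightarrow (1)$ and $(4) \Rightarrow (3)$), so that all four statements are seen to be equivalent. The conceptual heart is the classical interpretation of the Hilbert symbol: for a field $K$ containing $\dz$, and $\beta, \gamma \in K^\times$, one has $(\beta, \gamma)_K = 0$ if and only if $\gamma$ is a norm from $K[\sqrt[p]{\beta}]$, i.e.\ if and only if $\gamma = \N_{K[R]/K}(u)$ for some $u$ in the algebra $K[R]/(R^p - \beta)$ — this is exactly the norm form $\prod_{\omega \in \mu_p}(x_0 + x_1\omega R + \cdots + x_{p-1}\omega^{p-1}R^{p-1})$ appearing in (1) and (2), once one expands $u = x_0 + x_1 R + \cdots + x_{p-1}R^{p-1}$ and notes that the Galois conjugates of $R = \sqrt[p]{\beta}$ over $K$ are $\omega R$ for $\omega \in \mu_p$. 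So (2) is just the ``functorial/field-valued'' version of the statement ``$\theta(C)$ is a norm from $K[\sqrt[p]{\theta(B)}]$ for every $\theta$'', and (2) $\Leftrightarrow$ (3) is this classical fact applied uniformly.

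First I would record carefully the one subtlety hiding in the norm-form description: the algebra $K[R]/(R^p-\beta)$ need not be a field (if $\beta$ is a $p$-th power in $K$, it splits as a product of copies of $K$), so I would state the norm criterion in the degenerate form ``$\gamma \in \N(K[R]/(R^p-\beta))$'' which remains correct (when $\beta = u^p$ is a $p$-th power the norm map is surjective, corresponding to $(\beta,\gamma)_K = 0$ always holding). With that in hand: $(1) \Rightarrow (2)$ is the observation that an identity in $\E[R]/(R^p-B)$ maps, under any $F$-algebra homomorphism $\theta \colon \E \to K$ (extended to $\E[R] \to K[R]$ sending $R \mapsto R$), to the corresponding identity in $K[R]/(R^p - \theta(B))$. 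Conversely $(2) \Rightarrow (1)$ requires producing a single solution over $\E$ itself; here I would take $K$ to be any field through which $\E$ maps — but more cleanly, I would prove $(3) \Rightarrow (1)$ directly, since (3) lets me work one field-quotient of $\E$ at a time and then reassemble, because $\E$ is an \'etale $F$-algebra, hence a finite product of fields $\E = \prod_i E_i$, and a solution over each factor $E_i$ (which exists by the norm criterion applied to $(\theta_i(B), \theta_i(C))_{E_i} = 0$) assembles into a solution over the product $\E$. One has to check that $\E[R]/(R^p-B) = \prod_i E_i[R]/(R^p - B_i)$ respects this decomposition, which is immediate.

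For $(3) \Leftrightarrow (4)$: condition (4) is the special case of (3) where $\theta$ ranges over the $F$-embeddings $\E \to E = F[\sqrt[p]{a},\sqrt[p]{d}]$ (there are $\deg(E/F)$ of them, parametrised by the choices $X \mapsto \sigma(\sqrt[p]{a})$, $Y \mapsto \tau(\sqrt[p]{d})$, $\sigma \in \Gal(F[\sqrt[p]{a}]/F)$, $\tau \in \Gal(F[\sqrt[p]{d}]/F)$). So (3) $\Rightarrow$ (4) is trivial; the content is (4) $\Rightarrow$ (3). For this I would argue that any field extension $K/F$ receiving an $F$-homomorphism $\theta \colon \E \to K$ receives one that factors through some quotient field $E_i$ of $\E$, and each $E_i$ is $F$-isomorphic to a subfield of $E$ (namely $F[\sqrt[p]{a},\sqrt[p]{d}]$ is the compositum, and the quotient fields of $\E = F_a \otimes_F F_d$ are the various $F[\sqrt[p]{a}, \sqrt[p]{d}]$ obtained from compatible embeddings) — then $(\theta(B),\theta(C))_K$ is the image of $(\sigma(B),\tau(C))_E = 0$ under restriction $\h^2(E,\f_p) \to \h^2(K,\f_p)$, hence vanishes. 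The one point needing care is the identification of the connected components of $\Spec \E$ with the images of the $F$-embeddings $\E \to E$; this is a standard fact about \'etale algebras (the components of $\Spec(F_a \otimes_F F_d)$ correspond to orbits of $\Gal(\bar F/F)$ on $\Hom_F(\E, \bar F)$, and all land inside $E$).

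The main obstacle is purely bookkeeping rather than conceptual: handling the degenerate cases where $a$ and/or $d$ are $p$-th powers (so $F_a$ or $F_d$, and hence $\E$, is not a field — possibly $\E = F$ itself), and where $B$ or $C$ is a $p$-th power, uniformly with the generic case, while keeping the norm criterion valid. I expect the cleanest route is to never assume $\E$ is a field: treat it always as a finite product of fields, phrase the norm statement for the (possibly split) algebra $K[R]/(R^p - \theta(B))$, and invoke the classical Hilbert-symbol/norm equivalence — e.g.\ \cite[Chap.~XIV, \S2, Prop.~4]{corps-locaux}, already cited in the proof of Proposition~\ref{prop-hilbert-symbol} — componentwise. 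Once the degenerate cases are absorbed into this formalism, each individual implication is a one- or two-line verification.
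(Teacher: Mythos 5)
Your proposal is correct and takes essentially the same route as the paper: both reduce (1)~$\Leftrightarrow$~(2) to the \'etale decomposition of~$\E$ into fields, identify (2)~$\Leftrightarrow$~(3) with the Hilbert-symbol/norm criterion, and handle the degenerate case where~$\theta(B)$ is a~$p$-th power by observing that the norm form on the split algebra~$K[R]/(R^p-\theta(B))$ is surjective (the paper makes this explicit via a Vandermonde system, which is exactly the content of your surjectivity claim). Your treatment of (3)~$\Leftrightarrow$~(4) is slightly more detailed than the paper's one-line ``trivial'', but the content is the same.
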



\begin{proof}
  (1) $\implies$ (2) is trivial: simply apply~$\theta $. The converse follows from the fact that the étale algebra~$\E$ is isomorphic to a direct sum of fields such as~$K$.

  We turn to the equivalence of (2) and (3). In fact, let~$\theta \colon \E \to K$ be as proposed, and we prove that the two conditions, with this fixed~$\theta $, are equivalent. If~$\theta (B)$ is not a~$p$-th power in~$K$, we use that the identity in (2) means precisely that~$\theta (C)$ is a norm from~$K\left[\sq{\theta (B)}\right]$. This is equivalent to $(\theta (B), \theta (C))_K =0$ by Proposition~\ref{prop-hilbert-symbol}.

  If~$\theta (B)$ is a~$p$-th power in~$K$, things are slightly different. The condition expressed in (3) is essentially vacuous. We must show that the equation in (2) has a solution. For this, we consider the matrix equation 
\[ \left(\begin{array}{ccccc}
  1 & \omega_1 \sq{B} & \omega_1^2 \sq{B}^2 & \cdots & \omega_{1}^{p-1} \sq{B}^{p-1} \\
1 & \omega_2 \sq{B} & \omega_2^2 \sq{B}^2 & \cdots & \omega_{2}^{p-1} \sq{B}^{p-1} \\
\vdots & \vdots &     &   &    \\
1 & \omega_p \sq{B} & \omega_p^2 \sq{B}^2 & \cdots & \omega_{p}^{p-1} \sq{B}^{p-1} 
\end{array}\right)  \left(\begin{array}{c}
x_0 \\ x_1 \\ \vdots \\ x_{p-1}
\end{array}\right) = \left(\begin{array}{c}
\theta (C) \\ 1 \\ \vdots \\ 1
\end{array}\right) \]
Here we have chosen an enumeration~$\omega_1, \ldots, \omega_p$ of~$\mu_p$. Note that all three matrices have coefficients in~$K$. Since the square matrix has a Vandermonde determinant which is nonzero, as~$\omega_i \sq{B} \ne \omega _j \sq{B}$ when~$i \ne j$, we conclude that we may find a solution~$(x_0, \ldots, x_{p-1}) \in K^p$ to the system.

So the equation proposed in (2) has a solution, with~$R$ replaced by~$\sq B$. However, this is visibly independent of the choice of~$p$-th root~$\sq B$ (in fact, the equation is always the same!). As~$K[R]/(R^p - B)$ is now a direct sum of~$p$ copies of~$K$, corresponding to these different choices, we see that there is a solution in~$K[R]/(R^p - B)$ as well.

The equivalence of (3) and (4) is trivial.  
\end{proof}

\begin{rmk}
Note that the algebra~$\E[R]/(R^p - B)$ showing up in (1) is a free~$\E$-module on~$1, R, \ldots, R^{p-1}$, so that the proposed equation amounts to~$p$ different identities in~$\E$. For example, suppose~$p=2$. Then the equation in (1) is 
\[ (x_0 + x_1R)(x_0 - x_1 R) = x_0^2 - x_1^2 R^2 = x_0^2 - B x_1^2  = C\, ,   \]
since~$R^2 = B$. We do have two equations, but comparing the coefficients of~$R$ merely gives~$0=0$.
\end{rmk}

We can now write down a splitting variety for our problem. We have variables $\beta_0$, $\ldots$, $\beta_{p-1}$, $\gamma_0$, $\ldots $, $\gamma_{p-1}$, $f_1$, $f_2$ in~$F$, as well as variables~$x_0$, $\ldots $, $x_{p-1}$ in~$\E$, and the variety is defined over~$F$ by the equations $f_1 \ne 0$, $f_2 \ne 0$, and:

\begin{enumerate}
\item[(V1)] ``The norm of~$B = \beta_0 + \beta_1 X + \cdots + \beta_{p-1}X^{p-1}$ is~$bf_1^p$'', so in~$F_a$:
\[ \prod_{\omega \in \mu_p} (\beta_0 + \beta_1 \omega X + \cdots + \beta_{p-1} \omega ^{p-1} X^{p-1}) = b f_1^p \, .   \]

\item[(V2)] ``The norm of~$C = \gamma_0 + \gamma_1 Y + \cdots + \gamma_{p-1}Y^{p-1}$ is~$cf_2^p$'', so in~$F_d$:
\[ \prod_{\omega \in \mu_p} (\gamma_0 + \gamma_1 \omega Y + \cdots + \gamma_{p-1} \omega ^{p-1} Y^{p-1}) = c f_2^p \, .   \]

\item[(V3)] ``$(\sigma (B), \tau (C)) = 0$ for all~$\sigma, \tau $'', so by the Proposition, in~$\E[R]/(R^p - B)$ : 
\[ \prod_{\omega \in \mu_p} (x_0  + x_1 \omega R + x_2 \omega^2 R^2 + \cdots + x_{p-1} \omega^{p-1} R^{p-1}) = C \, .   \]
\end{enumerate}

\begin{thm}
  Let~$F$ be a field with~$\dz \in F$, and let~$a, b, c, d \in F^\times$. Let~$V$ be the affine algebraic variety defined by the equations (V1)-(V2)-(V3). The following assertions are equivalent. \begin{enumerate}

  \item The variety~$V$ has an~$F$-rational point, or in other words, there is a solution to (V1)-(V2)-(V3) over~$F$.

    \item Condition (2) from Theorem~\ref{thm-main}. 
  \end{enumerate}

\end{thm}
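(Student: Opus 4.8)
The plan is to read off the $F$-rational points of $V$ and reduce everything to Proposition~\ref{prop-translation-splitting-variety}. An $F$-point of $V$ is a collection of scalars $\beta_i,\gamma_j\in F$, units $f_1,f_2\in F^\times$, and elements $x_k\in\E$ satisfying (V1)--(V3); put $B=\beta_0+\beta_1X+\cdots+\beta_{p-1}X^{p-1}\in F_a$ and $C=\gamma_0+\gamma_1Y+\cdots+\gamma_{p-1}Y^{p-1}\in F_d$. First I would recognise (V1) and (V2) as norm conditions: the product $\prod_{\omega\in\mu_p}B(\omega X)$ in (V1) is invariant under $X\mapsto\zeta X$ for $\zeta\in\mu_p$, hence lies in $F[X^p]$, hence is scalar modulo $X^p-a$ with value $\prod_{\omega\in\mu_p}B(\omega\sq a)=\N_{F_a/F}(B)$; so (V1) says exactly $\N_{F_a/F}(B)=bf_1^p$, and likewise (V2) says $\N_{F_d/F}(C)=cf_2^p$. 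Next, by the equivalence of conditions (1) and (3) in Proposition~\ref{prop-translation-splitting-variety}, for fixed $B,C$ the existence of the $x_k$ realising (V3) is equivalent to $(\theta(B),\theta(C))_K=0$ for every $F$-homomorphism $\theta\colon\E\to K$ into a field extension $K/F$. Putting these together, $V$ has an $F$-rational point if and only if there exist $B\in F_a$, $C\in F_d$, $f_1,f_2\in F^\times$ with $\N_{F_a/F}(B)=bf_1^p$, $\N_{F_d/F}(C)=cf_2^p$, and $(\theta(B),\theta(C))_K=0$ for all such $\theta$.

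It remains to match this with condition (2) of Theorem~\ref{thm-main}, which I would do according to whether $a$ and $d$ lie in $F^{\times p}$. Since $\dz\in F$, every $F$-homomorphism out of the étale $F$-algebra $\E=F_a\otimes_F F_d$ has image in $E:=F[\sq a,\sq d]$, so the displayed condition is equivalent to $(\theta(B),\theta(C))_E=0$ for every $F$-homomorphism $\theta\colon\E\to E$. If neither $a$ nor $d$ is a $p$-th power in $F$, then $F_a,F_d$ are the fields $F[\sq a],F[\sq d]$, their norms to $F$ coincide with those of Theorem~\ref{thm-main}, and the $F$-homomorphisms $\E\to E$ are exactly the maps $u\otimes v\mapsto\sigma(u)\tau(v)$ with $(\sigma,\tau)\in\Gal(F[\sq a]/F)\times\Gal(F[\sq d]/F)$; sending $B$ to $B\otimes1$ and $C$ to $1\otimes C$, such a $\theta$ carries $B$ to $\sigma(B)$ and $C$ to $\tau(C)$, so the condition reads $(\sigma(B),\tau(C))_E=0$ for all $\sigma,\tau$, which is precisely condition (2). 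If instead $a\in F^{\times p}$ (the case $d\in F^{\times p}$ being symmetric), then $F[\sq a]=F$, $F_a\cong F^p$, $E=F[\sq d]$, and evaluating on the factors of $\E\cong\prod_{\omega\in\mu_p}F[\sq d]$ (with $B=(B_\omega)_\omega$ and $C$ diagonal) the condition becomes $(B_\omega,C)_E=0$ for all $\omega$; by bilinearity of the cup-product this amounts to $(b,C)_E=0$ (for one direction take $B_1=b$ and $B_\omega=1$ for $\omega\ne1$; for the other, multiply the relations and use $\prod_\omega B_\omega=bf_1^p$). On the other hand, condition (2) in this case puts no constraint on the element of $F[\sq a]=F$ and demands only $(b,\tau(C))_E=0$ for $\tau\in\Gal(F[\sq d]/F)$; since $(b,\tau(C))_E=\tau\bigl((b,C)_E\bigr)$, this too is $(b,C)_E=0$. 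Both sides retain the norm condition $\N_{F[\sq d]/F}(C)=cf_2^p$, so they agree; and if $a$ and $d$ are both $p$-th powers, the same reasoning gives $E=F$ and reduces both conditions to $(b,c)_F=0$.

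I expect the only genuine friction to be the étale-algebra bookkeeping in the degenerate cases --- how $F_a$, $F_d$ and $\E$ decompose when $a$ or $d$ is a $p$-th power, and checking that the norm and cup-product conditions survive the identifications $F_a\cong F^p$, $F_d\cong F^p$. The non-degenerate case is a direct dictionary translation, and the single substantive input --- the passage between the polynomial identity (V3) and the vanishing of cup-products --- has already been supplied by Proposition~\ref{prop-translation-splitting-variety}.
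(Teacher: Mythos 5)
Your argument is correct and follows essentially the same route as the paper: reduce (V1)--(V3) to norm and cup-product conditions via Proposition~\ref{prop-translation-splitting-variety}, then split into the case where $a,d\notin F^{\times p}$ (direct dictionary) and the degenerate cases where $F_a$ or $F_d$ splits. The only cosmetic difference is that you phrase the degenerate case abstractly through the product decomposition of the \'etale algebra $\E$, whereas the paper constructs $B\in F_a$ explicitly via a Vandermonde system and the evaluation maps $\theta_\omega$; the additivity-of-cup-products step (summing $(B_\omega,C)=0$ over $\omega$ to get $(b,C)=0$) is identical.
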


Of course, from Theorem~\ref{thm-main}, for generic values of~$a, b, c, d$, the conditions expressed here are also equivalent to the vanishing of the Massey product of~$a, b, c, d$ in the sense of~$\tilde \U_5$. 

\begin{proof}
  Suppose first that neither~$a$ nor~$d$ is a~$p$-th power in~$F$. Then we can identify~$F_a$ and $F_d$  with~$F[\sq a]$ and $F[\sq d]$ respectively, after choosing once and for all favourite~$p$-th roots~$\sq a, \sq d$ in an algebraic closure of~$F$. The argument is then very easy. Namely, equation (V1) controls the existence of~$B \in F[\sq a]$ such that~$\N_{F[\sq a]/F}(B) = b f_1^p$, and similarly equation (V2) controls the existence of~$C \in F[\sq d]$ such that~$\N_{F[\sq d]/F}(C) = c f_2^p$, while equation (V3) is equivalent to the condition $(\sigma (B), \tau  (C))_{F[\sq a, \sq d]} = 0$ for all~$\sigma , \tau $, by Proposition~\ref{prop-translation-splitting-variety}.


Now let us give an argument for the case when~$a$ is a~$p$-th power in~$F$, but~$d$ is not (the other cases are treated similarly). Suppose (2) holds. This gives us the element~$C$ showing up in equation (V2). We have not enough about equation (V1) just yet, but at least we draw that~$(b, \tau (C))_{F[\sq d]} = 0$ for all~$\tau \in \Gal(F[\sq d]/F)$. Now, let us find a solution to (V1) using the Vandermonde argument used in the previous proof. We state this as follows: let the homomorphisms $F_a \to F$ be written~$\theta_\omega $, indexed by the~$\omega \in \mu_p$ in the natural way (that is $\theta_\omega (X) = \omega \sq a$). Then the Vandermonde argument shows the existence of~$B \in F_a$ such that~$\theta_1(B) = b$ and~$\theta_\omega (B)= 1$ for~$\omega \ne 1$. This~$B$ satisfies (V1) with~$f_1= 1$, since applying~$\theta_\omega $ to both sides of (V1) always gives~$b$, whatever~$\omega $. Further, by inspection, for any~$\theta \colon \E \to K$ we have 
\[ (\theta (B), \theta (C))_{K} = 0 \, ,   \]
as in condition (3) of Proposition~\ref{prop-translation-splitting-variety}. By this Proposition, there is a solution to (V3). We have proved (2) $\implies$ (1).

Now suppose (1). Again (V2) controls the norm of~$C$. From (V1) we draw the existence of some~$B_0$ with
\[ \prod_\omega \, \theta_\omega (B_0) = bf_1^p \, .   \]
Since (V3) has a solution, Proposition~\ref{prop-translation-splitting-variety} guarantees that 
\[ (\theta_\omega (B_0), \tau (C))_{F[\sq d]} = 0  \]
for all~$\omega \in \mu_p$ and all~$\tau \in \Gal(F[\sq d]/F)$. Taking the sum of these as~$\omega $ varies, we draw $(b, \tau (C)) = 0$. We have obtained condition (2) from Theorem~\ref{thm-main}, with~$B=b$.
\end{proof}

\begin{coro}
Let~$F$ be a number field containing the~$p^2$-th roots of unity, and let~$a, b, c, d \in F^\times$ satisfy 
\[ (a,b)_F = (b, c)_F = (c, d)_F = 0 \, .   \]
Finally, let~$V$ be the variety constructed above. Then for each place~$v$ of~$F$, we have~$V(F_v)\ne \emptyset$.
\end{coro}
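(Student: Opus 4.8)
The plan is to prove~$V(F_v) \ne \emptyset$ one place at a time, each time reducing to the local results already established. First I would fix a place~$v$ of~$F$ and pass to the completion~$F_v$. The key preliminary remark is that, since~$F$ contains a primitive~$p^2$-th root of unity and~$p$ is prime, $F$ is totally complex: for~$p$ odd because~$\dz \in F$ is a non-real root of unity, and for~$p = 2$ because~$i \in F$. Consequently every archimedean completion of~$F$ equals~$\C$, while every non-archimedean completion~$F_v$ is a local field which again contains a primitive~$p^2$-th root of unity.

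Next I would transport the hypothesis to~$F_v$. Restriction along~$G_{F_v} \to G_F$ commutes with cup-products and is compatible with the Kummer isomorphisms, so from~$(a,b)_F = (b,c)_F = (c,d)_F = 0$ we get $(a,b)_{F_v} = (b,c)_{F_v} = (c,d)_{F_v} = 0$, where~$a,b,c,d$ are now viewed in~$F_v^\times$. The point is then to check that condition~(2) of Theorem~\ref{thm-main} holds over~$F_v$. For archimedean~$v$ this is immediate: over~$\C$ the groups~$\Gal(\C[\sq a]/\C)$ and~$\Gal(\C[\sq d]/\C)$ are trivial and~$\h^2(\C, \f_p) = 0$, so one simply takes~$B = b$, $C = c$, $f_1 = f_2 = 1$. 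For non-archimedean~$v$, I would invoke Proposition~\ref{prop-local-fields} applied to the local field~$F_v$ and the elements~$a,b,c,d$; its supplementary hypothesis — that~$(a,\dz)_{F_v} = 0$ in the degenerate case where~$[a],[b],[c],[d]$ span a one-dimensional subspace of~$\lin{F_v}$ and~$p > 2$ — is automatic here precisely because~$F_v$ contains the~$p^2$-th roots of unity, exactly as noted in the statement of that Proposition. This yields condition~(2) of Theorem~\ref{thm-main} over~$F_v$.

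Finally I would apply the Theorem proved just above, characterising the rational points of~$V$: for an arbitrary field containing~$\dz$, and with no genericity restriction, it equates the existence of a rational point of~$V$ with condition~(2) of Theorem~\ref{thm-main}. Reading this over~$F_v$ gives~$V(F_v) \ne \emptyset$, and since~$v$ was arbitrary the Corollary follows.

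There is no \emph{genuine} obstacle: the Corollary is a formal conjunction of Proposition~\ref{prop-local-fields} with the splitting-variety theorem. The only points requiring a moment's care are that the ``enough roots of unity'' hypothesis of Proposition~\ref{prop-local-fields} survives passage to the completion — which it does at once, as~$\mu_{p^2}(F) \subseteq \mu_{p^2}(F_v)$ — and that the archimedean places must be handled separately, which is trivial since everything cohomological over~$\C$ vanishes.
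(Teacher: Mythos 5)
Your proof is correct and follows the paper's approach exactly; the paper's own proof is simply ``Apply the Theorem, and Proposition~\ref{prop-local-fields}.'' Your expansion correctly supplies the details the paper leaves implicit: that the hypotheses restrict to each completion, that the archimedean places are all complex (so condition~(2) holds trivially there), and that the supplementary hypothesis of Proposition~\ref{prop-local-fields} is absorbed by the presence of~$\mu_{p^2}$.
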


\begin{proof}
Apply the Theorem, and Proposition~\ref{prop-local-fields}. 
\end{proof}

Thus, if~$V$ satisfies a local-to-global principle, then we deduce that the Massey product of~$a, b, c, d$ vanishes in the sense of~$\tilde \U_5$, at least for generic~$a, b, c, d$, by Theorem~\ref{thm-main}. 


\bibliography{myrefs}
\bibliographystyle{custom}

\end{document}